	\def\MR#1{}
\newcommand{\sA}{A}
\newcommand{\sB}{B}
\newcommand{\sG}{\mathscr{G}}
\newcommand{\sI}{\mathscr{I}}
\newcommand{\sM}{\mathscr{M}}
\newcommand{\CC}{\mathbb{C}}
\newcommand{\length}{{\rm length}}
\newcommand{\NN}{\normalfont\mathbb{N}}
\newcommand{\ZZ}{\mathbb{Z}}
\newcommand{\PP}{{\normalfont\mathbb{P}}}
\newcommand{\xx}{{\normalfont\mathbf{x}}}
\newcommand{\mm}{{\normalfont\mathfrak{m}}}
\newcommand{\init}{{\normalfont\text{in}}}
\newcommand{\QQ}{\mathbb{Q}}
\newcommand{\pp}{{\normalfont\mathfrak{p}}}
\newcommand{\br}{{\rm br}}
\newcommand{\qqq}{\mathfrak{q}}
\newcommand{\fP}{{\mathfrak{P}}}
\newcommand{\Ker}{\normalfont\text{Ker}}
\newcommand{\Ann}{\normalfont\text{Ann}}
\newcommand{\Supp}{\normalfont\text{Supp}}
\newcommand{\Ass}{{\normalfont\text{Ass}}}
\newcommand{\Min}{{\rm Min}}
\newcommand{\Sym}{\normalfont\text{Sym}}
\newcommand{\Rees}{\mathscr{R}}
\newcommand{\OO}{\mathcal{O}}
\newcommand{\HL}{\normalfont\text{H}_{\mm}}
\newcommand{\HH}{\normalfont\text{H}}
\newcommand{\iniTerm}{\normalfont\text{in}}
\newcommand{\Proj}{\normalfont\text{Proj}}
\newcommand{\Hilb}{{\normalfont\text{Hilb}}}
\newcommand{\Spec}{\normalfont\text{Spec}}
\newcommand{\biProj}{{\normalfont\text{BiProj}}}
\newcommand{\widesim}[2][1.2]{
	{\scalebox{#1}[1]{$\sim$}}
}
\DeclareMathOperator{\HT}{ht}
\DeclareMathOperator{\gr}{gr}
\def\f0{\mathbf{0}}
\def\ff{\mathbf{f}}
\def\1{\mathbf{1}}
\newtheorem{theorem}{Theorem}[section]
\newtheorem{headthm}{Theorem}
\newaliascnt{headcor}{headthm}
\newaliascnt{headconj}{headthm}
\newaliascnt{corollary}{theorem}
\newtheorem{corollary}[corollary]{Corollary}
\newaliascnt{claim}{theorem}
\newaliascnt{lemma}{theorem}
\newtheorem{lemma}[lemma]{Lemma}
\newaliascnt{conjecture}{theorem}
\newaliascnt{proposition}{theorem}
\newtheorem{proposition}[proposition]{Proposition}
\theoremstyle{definition}
\newaliascnt{definition}{theorem}
\newtheorem{definition}[definition]{Definition}
\newaliascnt{notation}{theorem}
\newaliascnt{example}{theorem}
\newaliascnt{examples}{theorem}
\newaliascnt{remark}{theorem}
\newtheorem{remark}[remark]{Remark}
\newaliascnt{question}{theorem}
\newtheorem{question}[question]{Question}
\newaliascnt{questions}{theorem}
\newaliascnt{problem}{theorem}
\newaliascnt{construction}{theorem}
\newaliascnt{setup}{theorem}
\newtheorem{setup}[setup]{Setup}
\newaliascnt{setupdef}{theorem}
\newaliascnt{algorithm}{theorem}
\newaliascnt{observation}{theorem}
\newaliascnt{defprop}{theorem}
\def\equationautorefname~#1\null{(#1)\null}
\def\sectionautorefname~#1\null{Section #1\null}
\def\subsectionautorefname~#1\null{\S #1\null}
\def\surjects{\twoheadrightarrow}
\DeclareFontFamily{OT1}{pzc}{}
\DeclareFontShape{OT1}{pzc}{m}{it}{<-> s * [1.100] pzcmi7t}{}
\DeclareMathAlphabet{\mathchanc}{OT1}{pzc}{m}{it}
\DeclareMathOperator{\fSpec}{\mathchanc{Spec}}
\DeclareMathOperator{\fProj}{\mathchanc{Proj}}
\title{Polar multiplicities and integral dependence}
\author{Yairon Cid-Ruiz}
\address{Department of Mathematics, North Carolina State University, Raleigh, NC 27695, USA}
\email{ycidrui@ncsu.edu}
\date{\today}
\keywords{polar multiplicities, integral dependence, birationality, mixed Buchsbaum-Rim multiplicities}
\subjclass[2020]{13H15, 14C17, 13B22, 13D40, 13A30}
\begin{document}

	\maketitle

	\begin{abstract}
	We provide new criteria for the integrality and birationality of an extension of graded algebras in terms of the general notion of polar multiplicities of Kleiman and Thorup. 
	As an application, we obtain a new criterion for when a module is a reduction of another in terms of certain mixed  Buchsbaum-Rim multiplicities. 
	Furthermore, we prove several technical results regarding polar multiplicities.
	\end{abstract}

\section{Introduction}

The main goal of this paper is to provide multiplicity-based criteria for when an inclusion of graded algebras is integral. 
Our criteria are expressed in terms of the general notion of \emph{polar multiplicities} of Kleiman and Thorup \cite{KLEIMAN_THORUP_GEOM, KLEIMAN_THORUP_MIXED}.

\medskip

Ever since the seminal work of Rees \cite{Rees1961}, the search for numerical criteria to characterize \emph{integral dependence} has been an important research topic in algebraic geometry, commutative algebra and singularity theory.
Rees showed that, in an equidimensional and universally catenary Noetherian local ring $(R,\mm)$, two $\mm$-primary ideals $I \subseteq J$ have the same integral closure if and only if they have the same Hilbert-Samuel multiplicity. 
Considerable effort has been made to extend this classical result to the case of arbitrary ideals, modules, and, more generally, algebras (see \cite{KLEIMAN_THORUP_GEOM,KLEIMAN_THORUP_MIXED, UV_NUM_CRIT, UV_CRIT_MOD, BOEGER, FLENNER_MANARESI, GAFFNEY, GG, PTUV}). 
Notably, a complete extension of Rees' theorem for arbitrary ideals was recently obtained by Polini, Trung, Ulrich and Validashti \cite{PTUV} in terms of the multiplicity sequence of Achilles and Manaresi \cite{ACHILLES_MANARESI_MULT}.
When $R = \OO_{X,0}$ and  $(X, 0) \subset (\CC^N, 0)$ is a reduced closed analytic subspace of pure dimension,  this extension to arbitrary ideals was previously settled by Gaffney and Gassler \cite{GG} in terms of Segre numbers. 

\medskip

Let $(R,\mm)$ be a Noetherian local ring and $A \subseteq B$ be a homogeneous inclusion of standard graded algebras over $R = A_0 = B_0$.
Let $X = \Proj(B)$, $Y = \Proj(A)$ and $r = \dim(X)$, and consider the associated morphism 
$$
f \;:\; U \subseteq X \rightarrow Y \quad  \text{ where } \quad U = X \setminus V_+(A_+B).
$$
Following Kleiman and Thorup \cites{KLEIMAN_THORUP_GEOM,KLEIMAN_THORUP_MIXED}, the polar multiplicities of $B$ are defined by the following intersection numbers
$$
m_r^i(B) \;=\; \int c_1\left(\OO_D(1)\right)^i c_1\left(\OO_X(1)\right)^{r-i}\, \big[D\big]_r \quad \text{ for all } \quad 0 \le i \le r;
$$
here $D$ denotes the projective completion of the normal cone $C_ZX$ to  $Z$ in $X$, where $Z \subseteq X$ is the preimage of the closed point of $\Spec(R)$.
Motivated by the work of Simis, Ulrich and Vasconcelos \cite{SUV_MULT}, we utilize polar multiplicities of the intermediate algebra $G=\gr_{A_+B}(B)$ to study the integrality and birationality of the extension $A \hookrightarrow B$.
We see $G$ as a standard graded $R$-algebra (see \autoref{sect_criteria}), and we consider the following polar multiplicities 
$$
m_r^i(A, B) \;=\; m_r^i(G) \quad \text{ for all } \quad 0 \le i \le r.
$$
Our main result is the following theorem which provides criteria for integrality and birationality in terms of the polar multiplicities of the algebras $A$, $B$ and $G$.

\begin{headthm}[{\autoref{thm_criteria}}]
	\label{thmA}
	Under the notations and assumptions above, the following statements hold: 
	\begin{enumerate}[\rm (i)]
	\item {\rm(Integrality)} Consider the following conditions: 
	\begin{enumerate}[\rm (a)]
		\item A finite morphism $f : X \rightarrow Y$ is obtained.
		\item $m_r^i(A, B) = m_r^i(G/B_tG)$ for all $0 \le i \le r$ and $t \gg 0$.
		\item $m_r^i(A, B) = m_r^i(B)$ for all $0 \le i \le r$.
	\end{enumerate}
	In general, the implications {\rm(a)} $\Rightarrow$ {\rm(b)} and {\rm(a)} $\Rightarrow$ {\rm(c)} hold. 
	Moreover, if $B$ is equidimensional and catenary, then the reverse implication {\rm(b)} $\Rightarrow$ {\rm(a)} also holds.
	\item {\rm(Integrality $+$ Birationality)} If we obtain a finite birational morphism $f : X \rightarrow Y$, then $m_r^i(A, B) = m_r^i(A)$ for all $0 \le i \le r$.
	The converse holds if $B$ is equidimensional and catenary.
	\end{enumerate}
\end{headthm}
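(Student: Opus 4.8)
My plan is to reduce everything to two structural facts about the intermediate algebra $G=\gr_{A_+B}(B)$ plus a Rees‑type rigidity statement for polar multiplicities (itself part of the technical work of \autoref{sect_criteria}). Bigrade $G$ by the $A_+B$‑adic order $n$ and the inherited $B$‑degree $t$. One checks that $G_{(n,t)}=0$ unless $0\le n\le t$, that the subdiagonal part $\mathfrak{h}:=\bigoplus_{n<t}G_{(n,t)}$ is a homogeneous ideal for the total grading $t$, and that $G/\mathfrak{h}=\bigoplus_{t}G_{(t,t)}\cong A$ canonically; moreover $\mathfrak{h}$ contains every $G_{(0,t)}$ with $t\ge1$, and $B_tG\subseteq\mathfrak{h}$ for all $t$. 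The second fact is a dictionary: regarding $G$ as a standard graded $R$‑algebra via its total degree $t$ (as in \autoref{sect_criteria}), one has $\dim\Proj(G)=r$, and for every $t\ge1$ the ideal $B_tG$ is generated by $G_{(0,t)}=B_t/(A_+B)_t$; hence $B_tG=0$ for $t\gg0$ if and only if $(A_+B)_t=B_t$ for $t\gg0$, i.e. if and only if $V_+(A_+B)=\varnothing$. Since $\Proj(B)\to\Proj(A)$ is always projective, this last condition is precisely that $f$ extends to a finite morphism $X\to Y$, which is condition (a).

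With this dictionary the forward implications are short. If $f\colon X\to Y$ is finite then $B_tG=0$ for $t\gg0$, so $G/B_tG=G$ and (b) holds trivially. For (a)$\Rightarrow$(c) I would use that, when $V_+(A_+B)=\varnothing$, $G$ is supported in boundedly many diagonals $t-n$ (indeed $(A_+^nB)_t=B_t$ once $t\gg n$), so that the degeneration of $B$ to $G$ along the Rees algebra $\mathcal{R}_{A_+B}(B)$ has empty exceptional locus; the bivariate Hilbert‑function comparison of \autoref{sect_criteria} then yields $m_r^i(B)=m_r^i(G)=m_r^i(A,B)$ for all $i$. For the easy half of (ii), if $f$ is finite and birational then $m_r^i(B)=m_r^i(A)$ by the birational invariance of polar multiplicities, and combining with (a)$\Rightarrow$(c) gives $m_r^i(A,B)=m_r^i(A)$ for all $i$.

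The substance is the converse under the hypothesis that $B$ is equidimensional and catenary, which I would deduce from the polar‑multiplicity analogue of Rees's theorem: for a standard graded $R$‑algebra $C$ with $\dim\Proj(C)=r$ and a homogeneous ideal $\mathfrak{a}$, one always has $m_r^i(C)\ge m_r^i(C/\mathfrak{a})$ for $0\le i\le r$, and if $C$ is equidimensional and catenary then equality for all such $i$ forces $\mathfrak{a}$ to be contained in every minimal prime of $C$, hence nilpotent. Since $B$ equidimensional and catenary makes $G=\gr_{A_+B}(B)$ equidimensional with $\Proj(G)$ purely $r$‑dimensional, the hypothesis applies. Taking $C=G$ and $\mathfrak{a}=B_tG$, condition (b) forces $B_tG$ nilpotent for $t\gg0$, hence $(A_+B)_s=B_s$ for infinitely many $s$ and so for all $s\gg0$; thus $V_+(A_+B)=\varnothing$ and $f$ is finite, establishing (b)$\Rightarrow$(a). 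For the converse in (ii) I would instead take $C=G$ and $\mathfrak{a}=\mathfrak{h}$: the equality $m_r^i(A,B)=m_r^i(A)=m_r^i(G/\mathfrak{h})$ forces $\mathfrak{h}$ nilpotent, whence $G_{(0,t)}$ is nilpotent for all $t\ge1$, $V_+(A_+B)=\varnothing$, and $f$ is finite; then (a)$\Rightarrow$(c) gives $m_r^i(B)=m_r^i(A)$, and the classical multiplicity criterion for a finite morphism, e.g. \cite{SUV_MULT}, upgrades this to birationality of $f$.

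The main obstacle will be the rigidity statement and the bookkeeping around it: via the additivity/associativity formalism for polar multiplicities of Kleiman and Thorup one must show that equality of \emph{all} of $m_r^0,\dots,m_r^r$ for $C$ and $C/\mathfrak{a}$ — not merely the top one — prevents any $r$‑dimensional component of $\Proj(C)$ from being lost and any multiplicity along one from dropping, and then one must genuinely verify that equidimensionality and catenaricity pass from $B$ to $G=\gr_{A_+B}(B)$ and that $\dim\Proj(G)=r$, so that "negligible in dimension $r$" can be promoted to "nilpotent". A secondary technical point is the Hilbert‑function comparison behind (a)$\Rightarrow$(c): matching the bivariate data computing $m_r^i(B)$ with that computing $m_r^i(G)$ exactly when the exceptional locus $V_+(A_+B)$ is empty.
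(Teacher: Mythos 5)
Your structural bookkeeping for the easy implications is essentially correct and matches the paper: the identification $[B_tG]_v=\bigoplus_{n\le v-t}G_{(n,v)}$ (so that your $\mathfrak{h}$ is exactly $B_1G$ and $G/B_1G\cong A$), and the equivalence ``$B_tG=0$ for $t\gg0$ $\Leftrightarrow$ $\sqrt{A_+B}\supseteq B_+$ $\Leftrightarrow$ $f$ extends to a finite morphism $X\to Y$'' all check out, and (a)$\Rightarrow$(b) then follows by additivity exactly as in the paper. But already at (a)$\Rightarrow$(c) you defer to an unspecified ``bivariate Hilbert-function comparison.'' The fact you need is that polar multiplicities with respect to an ideal of linear forms are invariant under passing to a reduction, i.e.\ $m_r^i(A_+B,B)=m_r^i(B_+,B)=m_r^i(B)$; this is \autoref{thm_reduction_ideals}, proved by filtering $M$ by the powers of $J$ and applying the additivity of \autoref{prop_add_polar_ideal}. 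Your ``boundedly many diagonals'' observation is the right heuristic, but it does not by itself compare the two bivariate Hilbert polynomials: $\length_R([G]_v/\mm^{n+1}[G]_v)$ and $\length_R(B_v/\mm^{n+1}B_v)$ genuinely differ, and one must show the error has degree $<r$, which is what the additivity machinery accomplishes.

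The real gap is the ``rigidity lemma'' carrying (b)$\Rightarrow$(a). You correctly reduce, via additivity, to showing that $m_r^i(B_tG)=0$ for all $i$ forces $B_tG$ to be nilpotent, and you propose to deduce this from ``a non-nilpotent ideal of an equidimensional catenary $C$ has some positive polar multiplicity,'' applied to $C=G$. That step requires knowing that $G=\gr_{A_+B}(B)$ is equidimensional with $\Proj(G)$ purely $r$-dimensional, and this does \emph{not} follow from $B$ being equidimensional and catenary: equidimensionality of associated graded rings is a quasi-unmixedness (universal catenarity) phenomenon, and you flag this verification as an ``obstacle'' without supplying it, so the argument does not close under the stated hypotheses. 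The paper's proof deliberately avoids passing through the geometry of $G$: it chooses a closed point $x\in V_+(A_+B)\subseteq X$ with associated prime $P$, invokes \cite[Lemmas 6.5, 6.6]{RELATIVE_MIXED} (this is where equidimensionality and catenarity of $B$ itself enter, giving $\dim(\OO_x)=r$ and the positivity of $\lim_n \length_R(B_{t+nd}/[P^n]_{t+nd})/n^r$), and transfers that positivity to the bivariate Hilbert polynomial of $B_{t+1}G$ through the inequality \autoref{eq_ineq_gr_Q}, concluding $m_r^i(B_{t+1}G)>0$ for some $i$. Without either this external positivity input or an actual proof that $G$ is equidimensional of dimension $r+1$, your (b)$\Rightarrow$(a) and the converse of (ii) are incomplete. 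A smaller inaccuracy: in the converse of (ii), after reducing to the finite case you appeal to ``the classical multiplicity criterion of \cite{SUV_MULT}'' to upgrade $m_r^i(A)=m_r^i(B)$ to birationality, but that criterion concerns the case $\dim(R)=0$; for general $R$ one needs \autoref{cor_associative_polar} together with \autoref{lem_dim_sG_star}, which guarantees that every relevant $(r+1)$-dimensional component of $A$ carries a nonzero polar multiplicity.
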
	

Notice that the above theorem yields the equalities $m_r^i(A) = m_r^i(A, B) = m_r^i(B)$ for all $0 \le i \le r$ if we obtain a finite birational morphism $f : X \rightarrow Y$.
Therefore we should ask the following question:
 
 \begin{question}
	If we have $m_r^i(A) = m_r^i(B)$ for all $0 \le i \le r$,  under which conditions do we obtain a finite birational morphism $f : X \rightarrow Y$?
 \end{question}
 
This question has already a positive answer in the case $R$ is zero-dimensional. 
If $R$ is Artinian, the only non-vanishing polar multiplicity is $m_r^0$ and it coincides with the usual multiplicity of a graded algebra (see \autoref{prop_gen_polar}(i) and \autoref{lem_vanish}).
Therefore, \cite[Proposition 6.1]{SUV_MULT}  yields a positive answer under the conditions that $\dim(R) = 0$ and $B$ is equidimensional.

\medskip

Given a finitely generated $R$-module $E$, we can study a notion of \emph{mixed Buchsbaum-Rim multiplicities} of $E$ by considering the polar multiplicities of the Rees algebra $\Rees(E)$ of $E$.
In \autoref{thm_reduction_mod}, we provide a criterion for integral dependence of modules in terms of these mixed Buchsbaum-Rim multiplicities.

\medskip

Finally, we also provide a number of technical results on polar multiplicities that might be interesting in their own right:
\begin{enumerate}[\rm (i)]
	\item \autoref{thm_additive_polar} gives a precise description of the additive behavior of polar multiplicities. 
	\item \autoref{thm_hypersurface_sect} describes the behavior of polar multiplicities under hypersurface sections. 
	\item \autoref{lem_localiz_polar} shows that, under mild conditions,  polar multiplicities do not increase under localizations. 
	\item In \autoref{sect_length_formula}, we relate polar multiplicities to certain St\"uckrad-Vogel cycles and by using a deformation to the normal cone we obtain a length formula. 
\end{enumerate}

\medskip
\noindent
{\bf Outline.}
The basic outline of this paper is as follows.
In \autoref{sect_basic}, we prove a number of basic results regarding polar multiplicities. 
By using a technique of deformation to the normal cone, we give a length formula for polar multiplicities in \autoref{sect_length_formula}.
The proof of \autoref{thmA} is given in \autoref{sect_criteria}.
Finally, in \autoref{sect_modules}, we apply \autoref{thmA} to obtain a criterion for integral dependence of modules in terms of certain mixed Buchsbaum-Rim multiplicities. 

\section{Basic results on polar multiplicities}
\label{sect_basic}

In this section, we recall the notion of polar multiplicities as defined by Kleiman and Thorup \cite{KLEIMAN_THORUP_GEOM,KLEIMAN_THORUP_MIXED}, and we prove some basic results for these invariants. 
Our main goal is to extend some of the results in the elegant treatment of Flenner--O'Carroll--Vogel \cite[\S 1.2]{FLENNER_O_CARROLL_VOGEL} (of Hilbert-Samuel multiplicities) to the case of polar multiplicities. 
The setup below is used throughout this section.

\begin{setup}
	\label{setup_basic_polar}
	Let $(R, \mm, \kappa)$ be a Noetherian local ring with maximal ideal $\mm$ and residue field $\kappa$.
	Let $B$ be a standard graded algebra over $R = B_0$.
	Set $X := \Proj(B)$.
\end{setup}

Following the work of Kleiman and Thorup \cites{KLEIMAN_THORUP_GEOM,KLEIMAN_THORUP_MIXED}, we define \emph{polar multiplicities} as follows. 
Let $M$ be a finitely generated graded $B$-module and $\sM := \widetilde{M}$ be the corresponding coherent sheaf on $X$.
Let $p : X \rightarrow \Spec(R)$ be the natural projection. 
Take $Z := p^{-1}(\{\mm\}) \subset X$ to be preimage of the closed point of $\Spec(R)$ and $\mathscr{I}_Z \subset \OO_X$ be its ideal sheaf.
Consider the corresponding normal cone $C:= C_ZX =\fSpec_X\left(\bigoplus_{n=0}^\infty\mathscr{I}_Z^n/\mathscr{I}_Z^{n+1}\right)$ and its projective completion
$$
D \;:=\; \PP\left(C \oplus 1\right) \;=\; \fProj_{X}\Bigg(\Big(\bigoplus_{n=0}^\infty\mathscr{I}_Z^n/\mathscr{I}_Z^{n+1}\Big)[z]\Bigg);
$$
here $z$ is a new variable of degree one, see, e.g., \cite[Appedinx B.5]{FULTON_INTERSECTION_THEORY}.
Denote by $C_\sM$  the coherent sheaf on $C$ determined by the module $\bigoplus_{n=0}^\infty \sI_Z^n\sM/\sI_Z^{n+1}\sM$.
Denote by $D_\sM$ the coherent sheaf on $D$ determined by the module $\big(\bigoplus_{n=0}^\infty \sI_Z^n\sM/\sI_Z^{n+1}\sM\big)[z]$.

\begin{definition}
	For all $r \ge \dim(\Supp(\sM))$ and $0 \le i \le r$, we say that the corresponding \emph{polar multiplicity} of $M$ is given by the intersection number 
	$$
	m_r^i(M) \;:= \; \int c_1\left(\OO_D(1)\right)^i c_1\left(\OO_X(1)\right)^{r-i}\, \big[D_\sM\big]_r.
	$$
	We also set $m^i(M) := m_{r}^i(M)$ with $r = \dim(\Supp(\sM))$.
\end{definition}

We now express the above invariants as normalized leading coefficients of a certain bivariate Hilbert polynomial.
Consider the following associated graded ring and associated graded module
$$
\sG  := \gr_{\mm B}(\sB) \;=\; \bigoplus_{n=0}^\infty \,\mm^n\sB\big/ \mm^{n+1}\sB \quad \text{ and } \quad \sG_M  := \gr_{\mm B}(M) \;=\; \bigoplus_{n=0}^\infty \,\mm^nM\big/ \mm^{n+1}M
$$
with respect to the ideal $\mm \sB$.
Notice that $\sG = \gr_{\mm B}(\sB)$ is a standard $\NN^2$-graded algebra over the residue field $\kappa$ and that $\sG_M$ is a finitely generated bigraded $\sG$-module.
For any $v \in \ZZ$ and $n \in \NN$, the corresponding $(v,n)$-graded part is given by $\big[\sG_M\big]_{(v,n)} = \mm^nM_v \big/ \mm^{n+1}M_v$.
We also study the following completed objects
$$
\sG^\star :=  \gr_{(\mm,t)}(\sB[t]) \;=\; \bigoplus_{n=0}^\infty \frac{(\mm,t)^n\sB[t]}{(\mm,t)^{n+1}\sB[t]} \quad \text{ and } \quad \sG_M^\star :=  \gr_{(\mm,t)}(M[t]) \;=\; \bigoplus_{n=0}^\infty \frac{(\mm,t)^nM[t]}{(\mm,t)^{n+1}M[t]}
$$ 
where $S := R[t]$ with $t$ a new variable, $B[t]:= B \otimes_R S$ and $M[t] := M \otimes_R S$.
Notice that we obtain an isomorphism of bigraded $\sG^\star$-modules
\begin{equation}
	\label{eq_ext_ass_gr}
	\sG_M^\star \;=\; \gr_{(\mm,t)}(M[t]) \;\cong\; \gr_{\mm B}(M)[t^\star]	
\end{equation}
where $t^\star$ is a new variable of degree $(0,1)$.
Hence, we obtain the equality 
$$
\length_R\left(\left[\sG_M^\star\right]_{(v,n)}\right) \;=\; \length_R\left(M_v/\mm^{n+1}M_v\right).
$$

Let $r := \dim(\Supp(\sM))$.
The bivariate Hilbert polynomials of $\sG_M$ and $\sG_M^\star$ are denoted by $P_{\sG_M}(v, n)$ and $P_{\sG_M^\star}(v, n)$, respectively.
We start with the following degree computations.

\begin{lemma}
	\label{lem_dim_sG_star}
	 We have  $\deg(P_{\sG_M}) \le r-1$ and $\deg(P_{\sG_M^\star})=r$.
\end{lemma}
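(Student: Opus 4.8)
The plan is to convert both degree statements into dimension computations for the finitely generated bigraded modules $\sG_M$ and $\sG_M^\star$ over the standard bigraded $\kappa$‑algebras $\sG$ and $\sG^\star$, and then to identify the relevant dimension with that of the normal cone of $\sM$. Throughout I assume $\sM\neq 0$, i.e. $r\ge 0$; if $\sM=0$ then $M_v=0$ for $v\gg 0$ and both Hilbert polynomials vanish. \emph{First I would dispose of $P_{\sG_M}$ in favour of $P_{\sG_M^\star}$.} Since $\sG_M$ and $\sG_M^\star$ are finitely generated bigraded modules over standard bigraded Noetherian $\kappa$‑algebras, the bigraded Hilbert--Serre theorem produces the polynomials $P_{\sG_M}$ and $P_{\sG_M^\star}$. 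From the isomorphisms $\sG^\star\cong\sG[t^\star]$ and $\sG_M^\star\cong\sG_M[t^\star]$ of \autoref{eq_ext_ass_gr} one has $\length_R\big([\sG_M^\star]_{(v,n)}\big)=\sum_{j=0}^{n}\length_R\big([\sG_M]_{(v,j)}\big)$, hence $P_{\sG_M}(v,n)=P_{\sG_M^\star}(v,n)-P_{\sG_M^\star}(v,n-1)$ for $v,n\gg 0$ and thus identically. As the right‑hand side is a first difference in the variable $n$, this forces $\deg(P_{\sG_M})\le \deg(P_{\sG_M^\star})-1$. So everything reduces to proving $\deg(P_{\sG_M^\star})=r$.

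\emph{Next I would interpret $P_{\sG_M^\star}$ geometrically.} Here I use the classical principle that, for a finitely generated bigraded module $N$ over a standard bigraded algebra over a field, the total degree of the bigraded Hilbert polynomial equals the dimension of the support of the associated coherent sheaf on the corresponding $\multProj$ (and the polynomial vanishes exactly when that sheaf is zero); this follows from Serre vanishing and asymptotic Riemann--Roch for an ample line bundle of bidegree $(1,1)$, and is implicit in \cite{KLEIMAN_THORUP_MIXED} and in \cite[\S 1.2]{FLENNER_O_CARROLL_VOGEL}. Applied to $N=\sG_M^\star$, the essential point is the identification $\multProj(\sG^\star)\cong D$ under which the sheaf attached to $\sG_M^\star$ corresponds to $D_\sM$: this comes from $\sG^\star\cong\sG[t^\star]$, $\sG_M^\star\cong\sG_M[t^\star]$ together with the definition $D=\fProj_X\big(\big(\bigoplus_{n}\sI_Z^n/\sI_Z^{n+1}\big)[z]\big)$, matching $z$ with $t^\star$. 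One subtlety must be handled with care: $[\sG^\star]_{(\bullet,0)}=B/\mm B$, so the ``degree‑$v$'' part of $\multProj(\sG^\star)$ is $Z=\Proj(B/\mm B)$ rather than $X$; but since $\bigoplus_n \sI_Z^n/\sI_Z^{n+1}$ and $\bigoplus_n \sI_Z^n\sM/\sI_Z^{n+1}\sM$ are supported on $Z$, the relative $\fProj$ over $Z$ agrees with the one over $X$, so the identification is consistent. Granting this, $\deg(P_{\sG_M^\star})=\dim\Supp(D_\sM)$.

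\emph{Finally I would compute the dimension of the cone.} As $D_\sM$ is the projective completion of $C_\sM$, we have $\dim\Supp(D_\sM)=\dim\Supp(C_\sM)$, and the basic dimension property of normal cones of coherent sheaves gives $\dim\Supp(C_\sM)=\dim\Supp(\sM)=r$ (see, e.g., \cite{FULTON_INTERSECTION_THEORY} or \cite[\S 1.2]{FLENNER_O_CARROLL_VOGEL}). In particular $C_\sM$, hence $D_\sM$, is nonzero, since $\Supp(\sM)$ meets $Z$ because $X\to\Spec(R)$ is proper and $R$ is local; so the vanishing caveat above does not occur. This yields $\deg(P_{\sG_M^\star})=r$, and with the first step, $\deg(P_{\sG_M})\le r-1$.

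I expect the main obstacle to be the identification $\multProj(\sG^\star)\cong D$ together with the matching $D_\sM$: one has to keep careful track of which base scheme (the whole $X$ versus the special fibre $Z$) each relative $\fProj$ is taken over, and reconcile the $\multProj$ description of a bigraded algebra with the iterated relative $\fProj$. The bigraded Hilbert--Serre/asymptotic Riemann--Roch input and the normal‑cone dimension equality $\dim\Supp(C_\sM)=\dim\Supp(\sM)$ are standard. (Alternatively, the last two steps could be done purely algebraically, by showing $\dim(\sG_M^\star)=\dim_B(M)+2=r+2$ after discarding the $B_+$‑torsion of $M$ and using that the $\mm B$‑adic filtration on $M$ is separated; but that route additionally requires checking that $\sG_M^\star$ has no associated prime containing an irrelevant ideal, which the geometric argument sidesteps.)
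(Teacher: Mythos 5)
Your argument is correct, but it takes a genuinely different route from the paper's. The paper works purely algebraically: after replacing $M$ by $M/\HH_{B_+}^0(M)$ and $B$ by $B/\Ann_B(M)$, it bounds both degrees by $\dim(\sG_M)-2$ and $\dim(\sG_M^\star)-2$ respectively, and then gets the equality $\deg(P_{\sG_M^\star})=r$ by checking that some minimal prime of $\sG_M^\star$ of dimension $r+2$ contains neither $[\sG^\star]_{(1,0)}$ nor $[\sG^\star]_{(0,1)}$ --- the first exclusion because $\dim(\sG^\star/([\sG^\star]_{(1,0)}))=\dim(\gr_\mm(R)[t^\star])=\dim(R)+1<r+2$, the second because of the polynomial-ring structure $\sG_M^\star\cong\sG_M[t^\star]$ from \autoref{eq_ext_ass_gr}. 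You instead (a) reduce $P_{\sG_M}$ to $P_{\sG_M^\star}$ by a first-difference argument in $n$ (a clean observation the paper does not make explicit), and (b) compute $\deg(P_{\sG_M^\star})$ geometrically as $\dim\Supp(D_\sM)=\dim\Supp(C_\sM)=\dim\Supp(\sM)=r$, via the identification $\biProj(\sG^\star)\cong D$, the equality of the total degree of a bigraded Hilbert polynomial with the dimension of the relevant support, and the dimension-preservation of normal cones, using properness of $X\to\Spec(R)$ to see that $Z$ meets every component of $\Supp(\sM)$. The two arguments are morally dual: your appeal to normal-cone dimension theory does exactly the job of the paper's check on minimal primes (a top-dimensional minimal prime containing one of the irrelevant ideals is precisely what would make the relevant support drop in dimension). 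The paper's version is shorter and self-contained; yours explains \emph{why} the answer is $r$ (it is just $\dim\Supp(\sM)$) and ties the Hilbert-polynomial description directly to the cycle $[D_\sM]_r$ appearing in the definition of $m_r^i$. The standard inputs you cite are correct and there is no circularity, since you do not invoke \autoref{lem_basic_polar}.
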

\begin{proof}
	We may substitute $M$ by $\overline{M} = M/\HH_{B_+}^0(M)$ and $B$ by $\overline{B} = B/\Ann_B(M)$.
	Thus may assume that $\dim(B) = \dim(M) = r+1$ and that $\dim(R) = \dim(B/B_+) < r+1$.
	It then follows that $\deg(P_{\sG_M}) \le \dim(\sG_M) -2 =r-1$ and $\deg(P_{\sG_M^\star}) \le \dim(\sG_M^\star)- 2 = r$.
	To prove the equality $\deg(P_{\sG_M^\star}) = r$, it suffices to check that no minimal prime of $\sG_M^\star$ of dimension $r+2$ can contain $[\sG^\star]_{(1,0)} \cap [\sG^\star]_{(0,1)}$.
	Notice that no such minimal prime can contain $[\sG^\star]_{(0,1)}$ due to \autoref{eq_ext_ass_gr}, and that no such minimal prime can contain $[\sG^\star]_{(1,0)}$ because $\dim(\sG^\star/([\sG^\star]_{(1,0)}))=\dim(\gr_\mm(R)[t^\star]) = \dim(R)+1 < r+2$.
	So the result follows.
\end{proof}
Then we write
$$
P_{\sG_M}(v, n) \;=\; \sum_{i+j=r-1}\frac{e\left(i,j;\,\sG_M\right)}{i!\,j!}\,v^in^j \;+\; \text{(lower degree terms)}
$$ 
and 
$$
P_{\sG_M^\star}(v, n) \;=\; \sum_{i+j=r}\frac{e(i,j;\,\sG_M^\star)}{i!\,j!}\,v^in^j \;+\; \text{(lower degree terms)}.
$$
We have the following connections between the polar multiplicities of $M$ and the bivariate polynomials $P_{\sG_M}$ and $P_{\sG_M^*}$.

\begin{lemma}
	\label{lem_basic_polar}
	The following statements hold: 
	\begin{enumerate}[\rm (i)]
		\item $m_r^i(M) = e\left(r-i, i; \sG_M^\star\right)$ for all $0 \le i \le r$.
		\item $e(i, j+1; \sG_M^\star) = e(i,j;\sG_M)$ for all $i + j = r-1$.
	\end{enumerate}
\end{lemma}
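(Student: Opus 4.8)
The plan is to deduce both statements from the description of polar multiplicities as intersection numbers on the projective completion $D = \PP(C \oplus 1)$ of the normal cone, combined with the standard translation between intersection numbers of ample classes and leading coefficients of multigraded Hilbert polynomials. First I would recall that, by construction, $D = \fProj_X\big(\sR\big)$ where $\sR = \big(\bigoplus_n \sI_Z^n/\sI_Z^{n+1}\big)[z]$, and the sheaf $\OO_D(1)$ is the tautological quotient; the class $c_1(\OO_D(1))$ restricted to $D_\sM$ is what one integrates against powers of $c_1(\OO_X(1))$ (pulled back along $D \to X$). The key identification is that the bigraded module $\sG_M^\star \cong \gr_\mm(M)[t^\star]$ of \autoref{eq_ext_ass_gr}, with its $\NN^2$-grading by $(v,n)$, is exactly the bigraded coordinate ring whose $\fProj$ over $\Spec(\kappa)$ in the first grading recovers $X' = \Proj(\gr_\mm(B)[t^\star]$ in degree $v)$ and whose Rees-type second grading $n$ corresponds to the extra $\PP^1$-factor coming from the $[z]$ (equivalently the $[t^\star]$). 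Concretely, $D$ with its two natural line bundles $\OO_D(1)$ (from $z$, i.e. the projective completion direction) and the pullback $\OO_X(1)$ is the biprojective scheme associated to the $\NN^2$-graded ring $\sG^\star$, and $D_\sM$ corresponds to $\sG_M^\star$; this is where I would invoke \cite[\S1.2]{FLENNER_O_CARROLL_VOGEL} or the Kleiman--Thorup formalism \cites{KLEIMAN_THORUP_GEOM,KLEIMAN_THORUP_MIXED}.

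Granting that dictionary, part (i) is the statement that for a bigraded module of Hilbert-polynomial degree $r$ over a standard $\NN^2$-graded algebra, the mixed intersection number $\int c_1(\OO_D(1))^i\, c_1(\OO_X(1))^{r-i}\, [D_\sM]_r$ equals the normalized leading coefficient $e(r-i,i;\sG_M^\star)$: integrating $i$ copies of the first ample class and $r-i$ copies of the second against the $r$-cycle of $D_\sM$ picks out the coefficient of $v^{r-i} n^{i}/((r-i)!\, i!)$ in $P_{\sG_M^\star}(v,n)$, which by our normalization is exactly $e(r-i,i;\sG_M^\star)$. I would make this precise by the usual argument: cutting by $r-i$ general hyperplanes from $|\OO_X(1)|$ and $i$ general hyperplanes from $|\OO_D(1)|$ reduces the intersection number to the degree of a zero-dimensional scheme, whose length is read off from the top coefficient in the corresponding direction of the bivariate Hilbert polynomial. \autoref{lem_dim_sG_star} guarantees $\deg(P_{\sG_M^\star}) = r$, so the leading form is genuinely of degree $r$ and the coefficient $e(r-i,i;\sG_M^\star)$ is well-defined for every $0 \le i \le r$ (it may of course be zero).

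Part (ii) I would obtain directly from the isomorphism $\sG_M^\star \cong \gr_\mm(M)[t^\star]$ with $t^\star$ of degree $(0,1)$. Tensoring a bigraded module with a polynomial ring in one variable of bidegree $(0,1)$ shifts the Hilbert function by a summation in the second variable: $H_{\sG_M^\star}(v,n) = \sum_{k=0}^{n} H_{\sG_M}(v,k)$, so the polynomials satisfy $P_{\sG_M^\star}(v,n) - P_{\sG_M^\star}(v,n-1) = P_{\sG_M}(v,n)$ for $n \gg 0$. Since $\deg P_{\sG_M} \le r-1$ by \autoref{lem_dim_sG_star}, comparing the degree-$r$ part of the left side (a discrete derivative in $n$, which lowers the $n$-degree by one) with the degree-$(r-1)$ leading form of $P_{\sG_M}$ yields, coefficient by coefficient, $e(i,j+1;\sG_M^\star) = e(i,j;\sG_M)$ for all $i+j=r-1$; the combinatorial factor $\tfrac{1}{i!\,(j+1)!}\cdot(j+1) = \tfrac{1}{i!\,j!}$ matches the normalizations so that no extra constants appear.

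The only real obstacle is making the geometric dictionary in the first paragraph airtight — precisely matching $D$, $D_\sM$, and the two line bundles $\OO_D(1)$, $(D\to X)^*\OO_X(1)$ with the biprojective data of the $\NN^2$-graded ring $\sG^\star = \gr_{(\mm,t)}(B[t])$ and its module $\sG_M^\star$, including checking that the cycle $[D_\sM]_r$ corresponds to the degree-$r$ part of the associated cycle of $\sG_M^\star$. Once that is in place, parts (i) and (ii) are essentially formal consequences of multigraded Hilbert-polynomial bookkeeping together with \autoref{lem_dim_sG_star}; I expect (ii) to be entirely routine and (i) to require only care with genericity of the hyperplane sections, which is standard in the Kleiman--Thorup setting over a Noetherian local base.
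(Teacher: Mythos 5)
Your proposal is correct and follows essentially the same route as the paper: part (i) is exactly the intersection-number/mixed-multiplicity dictionary that the paper outsources to \cite[Lemma 4.3]{KLEIMAN_THORUP_GEOM}, and your finite-difference computation for part (ii) is the explicit form of the paper's one-line argument via the Hilbert series identity $\Hilb_{\sG_M^\star}(t_1,t_2) = \tfrac{1}{1-t_2}\Hilb_{\sG_M}(t_1,t_2)$ and \cite[Theorem A]{MIXED_MULT}. The only difference is that you re-derive the cited facts rather than citing them, which is fine.
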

\begin{proof}
	(i) This is a direct consequence of \cite[Lemma 4.3]{KLEIMAN_THORUP_GEOM
	}.
	
	(ii) We have the equality $\Hilb_{\sG_M^\star}(t_1,t_2) = \frac{1}{1-t_2}\Hilb_{\sG_M}(t_1,t_2)$ of Hilbert series (see \autoref{eq_ext_ass_gr}).
	Then the result follows from \cite[Theorem A]{MIXED_MULT}.
\end{proof}

We consider the following \emph{Rees algebra} and \emph{Rees module} 
$$
\Rees(\mm; B) \;:=\; \bigoplus_{n=0}^\infty \mm^nBT^n \subset B[T] \quad \text{ and } \quad  
\Rees(\mm; M) \::=\; \bigoplus_{n=0}^\infty \mm^nMT^n \subset M[T].
$$
We have that $\Rees(\mm; B)$ is naturally a standard bigraded $R$-algebra and that  $\Rees(\mm; M)$ is finitely generated bigraded module over $\Rees(\mm; B)$.
We also consider the \emph{extended Rees algebra} 
$$
\Rees^+(\mm; B) \;:=\; B[\mm T, T^{-1}] \;=\; \bigoplus_{n\in \ZZ} \mm^nBT^n \;\subset \; B[T, T^{-1}]
$$ and the \emph{extended Rees module} 
$$
\Rees^+(\mm; M) \;:=\; \bigoplus_{n \in \ZZ} \mm^nMT^n \;\subset\; M[T, T^{-1}].
$$
Recall that we have the isomorphism 
$$
\sG_M \;=\; \gr_{\mm B}(M) \;\cong\; \Rees^+(\mm; M) \big/ T^{-1} \Rees^+(\mm; M).
$$

The following theorem is inspired by \cite[Theorem 1.2.6]{FLENNER_O_CARROLL_VOGEL}.
Here, we obtain a quite precise description of the behavior of polar multiplicities under short exact sequences.

\begin{theorem}
	\label{thm_additive_polar}
	Assume \autoref{setup_basic_polar}.
	Let $0 \rightarrow M' \rightarrow M \rightarrow M'' \rightarrow 0$ be a short exact sequence of finitely generated graded $\sB$-modules. 
	Set $r = \dim(\Supp(\widetilde{M}))$.
	Then the following statements hold:
	\begin{enumerate}[\rm (i)]
		\item The cycles on $\biProj\left(\sG\right)$ associated to the bigraded modules
		$$
		\Ker\big(\sG_{M'} \rightarrow \sG_M\big) \qquad \text{ and } \qquad \Ker\big(\sG_M/\sG_{M'} \rightarrow \sG_{M''}\big)
		$$
		are equal.
		\item Let $h$ be the dimension of the supports of the sheaves on $\biProj(\sG)$ associated to the kernels in part {\rm (i)}.
		Then, for $v \in \ZZ$ and $n \ge 0$,
		$$
		Q(v, n) \;:=\; \length_R\big(\left[\sG_{M'}^\star\right]_{(v, n)}\big) + \length_R\big(\left[\sG_{M''}^\star\right]_{(v, n)}\big) - \length_R\big(\left[\sG_{M}^\star\right]_{(v, n)}\big) \;\ge\; 0,
		$$
		and $Q(v,n)$ coincides with a polynomial of degree $h$ for $v \gg 0$ and $n \gg 0$.
		\item $m_r^i(M) = m_r^i(M') + m_r^i(M'')$
		for all $0 \le i \le r$.
	\end{enumerate}
	
\end{theorem}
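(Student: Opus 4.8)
The statement to prove is \autoref{thm_additive_polar}(iii): the polar multiplicities are additive on short exact sequences. The plan is to deduce this from parts (i) and (ii) together with \autoref{lem_basic_polar}, so that the real work is the translation between the three flavors of invariants ($m_r^i$, coefficients $e(\cdot,\cdot;\sG_M^\star)$, and lengths of graded pieces of $\sG_M^\star$). By \autoref{lem_basic_polar}(i), it suffices to show $e(r-i,i;\sG_M^\star) = e(r-i,i;\sG_{M'}^\star) + e(r-i,i;\sG_{M''}^\star)$ for all $0 \le i \le r$. These are (normalized) leading coefficients in bidegree $r$ of the bivariate Hilbert polynomials $P_{\sG_{M}^\star}$, $P_{\sG_{M'}^\star}$, $P_{\sG_{M''}^\star}$, so the claim will follow if I can show that the ``defect polynomial''
$$
Q(v,n) \;=\; \length_R\big([\sG_{M'}^\star]_{(v,n)}\big) + \length_R\big([\sG_{M''}^\star]_{(v,n)}\big) - \length_R\big([\sG_{M}^\star]_{(v,n)}\big)
$$
has degree strictly smaller than $r$ as a polynomial for $v,n \gg 0$. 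That is exactly what part (ii) gives, provided $h < r$: indeed $h$ is the dimension of the supports on $\biProj(\sG)$ of the kernels $K' := \Ker(\sG_{M'} \to \sG_M)$ and (by part (i)) $K'' := \Ker(\sG_M/\sG_{M'} \to \sG_{M''})$, which are subquotients of $\sG_{M'}$, resp.\ $\sG_M$; since $\dim(\Supp(\sG_M)) \le r$ and likewise for $\sG_{M'}$ (both being associated graded modules over $\sG$ with respect to $\mm\sB$, living over $\Proj(B)$ with $\dim \le r+1$ and a further drop of dimension passing to $\biProj$), one gets $h \le r$, and in fact the strict inequality $h < r$ because the kernels are killed by a power of $B_+$ in a suitable sense — this is where I expect to spend real effort.

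**Carrying it out.** First I would reduce, as in the proof of \autoref{lem_dim_sG_star}, to the situation $\dim(B) = \dim(M) = r+1$ with $\dim(R) < r+1$, replacing $M$ by $M/\HH_{B_+}^0(M)$ and $B$ by $B/\Ann_B(M)$; I must check the short exact sequence behaves well under these reductions up to cycles of dimension $< r$ on $X$, which is harmless for leading coefficients. Next, from the snake-lemma diagram attached to $0 \to M' \to M \to M'' \to 0$ and the right-exactness of $\gr_\mm(-)$, one has exact sequences
$$
0 \to K' \to \sG_{M'} \to \sG_M \to \sG_M/\sG_{M'} \to 0, \qquad 0 \to K'' \to \sG_M/\sG_{M'} \to \sG_{M''} \to 0,
$$
so that additivity of $\NN^2$-graded Hilbert functions yields, for all $v,n$,
$$
Q(v,n) \;=\; \length_R\big([K']_{(v,n)}\big) + \length_R\big([K'']_{(v,n)}\big).
$$
(Here I am using the isomorphism \autoref{eq_ext_ass_gr} to pass between $\sG_M$ and $\sG_M^\star$: a length of a graded piece of $\sG_M^\star$ is a sum over $k \le n$ of lengths of pieces of $\sG_M$, so the same relation among the $\sG_M^\star$-lengths follows by summing.) Then $Q$ agrees for $v,n \gg 0$ with the bivariate Hilbert polynomial of $K' \oplus K''$, whose degree is $\dim(\Supp_{\biProj \sG}) = h$ by the usual dimension theory of multigraded modules. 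This gives part (ii) except for the nonnegativity, which is immediate from the displayed formula, and the degree claim; and it reduces (iii) to proving $h < r$.

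**The main obstacle.** The crux is the bound $h \le r-1$, equivalently $h < r$. The point is that $K' = \Ker(\sG_{M'} \to \sG_M)$ is a subquotient of $\sG_{M'}$ supported, on $\biProj(\sG)$, inside the locus where the map $M' \to M$ fails to be ``strict'' for the $\mm$-adic filtration; since $M' \hookrightarrow M$ is injective, by the Artin–Rees lemma there is $c$ with $\mm^n M' \cap \mm^{n+c} M \subseteq \mm^n(\mm^{c}M')$ hmm — more precisely $\mm^{n}M \cap M' \subseteq \mm^{n-c}M'$ for $n \ge c$, so the kernel $K'$ is annihilated by $\mm^c \sG$; hence its support in $\Proj(B)$ is contained in $Z = p^{-1}(\mm)$, which has $\dim \le \dim(R) - 1 + \dim(\text{fibre}) $... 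Rather, the clean statement is: $K'$ is a module over $\sG = \gr_\mm(B)$ that is killed by a power of the irrelevant part coming from $\mm$, so its $\biProj$-support sits over $\Proj(\gr_\mm(R)) $ inside $\Proj(B)$, and a dimension count using $\dim(R) < r+1$ forces $h \le r-1$. I would also invoke the argument in \autoref{lem_dim_sG_star} showing $\deg(P_{\sG_M}) \le r-1$: since $K'$ is a submodule of $\sG_{M'}$ and $\deg(P_{\sG_{M'}}) \le r-1$, already $\deg(P_{K'}) \le r-1$; translating to $\biProj$-support via \autoref{lem_basic_polar}-type bookkeeping, $h \le r-1$. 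The careful handling of $K''$ is identical since $\sG_M/\sG_{M'}$ is a quotient of $\sG_M$ with $\deg(P_{\sG_M}) \le r-1$. Once $h < r$ is secured, part (iii) follows by comparing bidegree-$r$ coefficients: $Q$ has degree $h < r$, so the degree-$r$ parts of $P_{\sG_{M'}^\star} + P_{\sG_{M''}^\star}$ and $P_{\sG_M^\star}$ coincide term by term, i.e.\ $e(r-i,i;\sG_{M'}^\star) + e(r-i,i;\sG_{M''}^\star) = e(r-i,i;\sG_M^\star)$ for every $i$, and \autoref{lem_basic_polar}(i) finishes the proof.
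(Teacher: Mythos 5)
There is a genuine gap, and it sits at the center of your argument: the identity
$$
Q(v,n) \;=\; \length_R\big([K']_{(v,n)}\big) + \length_R\big([K'']_{(v,n)}\big)
$$
is false. Chasing the very exact sequences you write down, namely $0 \to K' \to \sG_{M'} \to \sG_M \to \sG_M/\sG_{M'} \to 0$ and $0 \to K'' \to \sG_M/\sG_{M'} \to \sG_{M''} \to 0$, gives at the level of $\sG$-graded pieces
$$
\length_R\big([\sG_{M'}]_{(v,j)}\big) + \length_R\big([\sG_{M''}]_{(v,j)}\big) - \length_R\big([\sG_{M}]_{(v,j)}\big) \;=\; \length_R\big([K']_{(v,j)}\big) - \length_R\big([K'']_{(v,j)}\big),
$$
a \emph{difference}, not a sum (test it on $\mm \subset R \to R/\mm$ over a DVR: the map $\gr_\mm(\mm)\to\gr_\mm(R)$ is zero, so $K'=\gr_\mm(\mm)$ while $K''=[\gr_\mm(R)]_{\ge 1}$, and the left side is $1,0,0,\dots$, matching the difference but not the sum). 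Summing over $j\le n$ then gives $Q(v,n)=\sum_{j=0}^n\big(\length([K']_{(v,j)})-\length([K'']_{(v,j)})\big)$, from which neither the nonnegativity of $Q$ nor the claim $\deg Q=h$ is ``immediate''; also note that even with your (incorrect) sum formula there is an internal inconsistency, since summing over $j\le n$ would raise the degree to $h+1$. A second, smaller gap: part (i) is itself part of the statement, and you cite it rather than prove it; your Artin--Rees digression (where you yourself hesitate) does not establish it.

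The paper's proof resolves exactly these points by one extra construction: set $N:=\Ker\big(\Rees^+(\mm;M)\to\Rees^+(\mm;M'')\big)$ and $L:=N/\Rees^+(\mm;M')$. Two applications of the snake lemma produce $0\to N/T^{-1}N\to\sG_M\to\sG_{M''}\to 0$ and $0\to U\to L(0,1)\xrightarrow{T^{-1}}L\to V\to 0$ with $U\cong K'$ and $V\cong K''$, and the telescoping of the latter yields $Q(v,n-1)=\length_R([L]_{(v,n)})$. This single identity gives nonnegativity for free, identifies $\deg Q$ with $\dim\Supp$ of $L$ on $\biProj(\sG)$ (which equals $h$, since the minimal primes of $L$ are minimal primes of both $U$ and $V$ with equal local lengths --- this is precisely part (i)), and then your final step is correct: $h\le r-1$ because $K'\subseteq\sG_{M'}$ and $\deg P_{\sG_{M'}}\le r-1$ by \autoref{lem_dim_sG_star}, so \autoref{lem_basic_polar}(i) gives part (iii). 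Your strategy could be partially salvaged for (iii) alone by using part (i) to cancel the degree-$h$ terms of $P_{K'}$ and $P_{K''}$ in the corrected difference formula, but parts (i) and (ii) as stated genuinely require the module $L$ or an equivalent device.
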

\begin{proof}
	Let $N := \Ker\left(\Rees^+(\mm; M) \rightarrow \Rees^+(\mm; M'')\right)$ and $L := N / \Rees^+(\mm; M')$.
	We have the following commutative diagram with exact rows and columns 
	\begin{equation*}
		\begin{tikzpicture}[baseline=(current  bounding  box.center)]
			\matrix (m) [matrix of math nodes,row sep=3em,column sep=4.5em,minimum width=2em, text height=1.5ex, text depth=0.25ex]
			{
				& 0 & 0 & 0 &	\\			
				0 & N(0,1) & \Rees^+(\mm; M)(0,1) & \Rees^+(\mm; M'')(0,1) & 0\\
				0 & N & \Rees^+(\mm; M) & \Rees^+(\mm; M'') & 0.\\
			};
			\path[-stealth]
			(m-2-1) edge (m-2-2)
			(m-2-2) edge (m-2-3)
			(m-2-3) edge (m-2-4)
			(m-2-4) edge (m-2-5)
			(m-3-1) edge (m-3-2)
			(m-3-2) edge (m-3-3)
			(m-3-3) edge (m-3-4)
			(m-3-4) edge (m-3-5)
			(m-1-2) edge (m-2-2)
			(m-1-3) edge (m-2-3)
			(m-1-4) edge (m-2-4)
			(m-2-2) edge node [left] {$T^{-1}$} (m-3-2)
			(m-2-3) edge node [left] {$T^{-1}$} (m-3-3)
			(m-2-4) edge node [left] {$T^{-1}$} (m-3-4)
			;		
		\end{tikzpicture}	
	\end{equation*}
	The snake lemma yields the short exact sequence 
	\begin{equation}
		\label{eq_ex_seq_N_gr_gr}
		0 \longrightarrow N/T^{-1}N \longrightarrow \sG_M \longrightarrow \sG_{M''} \longrightarrow 0.
	\end{equation}
	Let $U$ and $V$ be the kernel and cokernel of the multiplication map $L(0,1) \xrightarrow{T^{-1}} L$, respectively.
	Hence we consider the exact sequence 
	\begin{equation}
		\label{eq_ex_seq_mult_L}
		0 \longrightarrow U \longrightarrow L(0,1) \xrightarrow{\quad T^{-1} \quad} L \longrightarrow V \longrightarrow 0.
	\end{equation}
	We obtain the following commutative diagram with exact rows and columns 
	\begin{equation*}
		\begin{tikzpicture}[baseline=(current  bounding  box.center)]
			\matrix (m) [matrix of math nodes,row sep=3em,column sep=4.5em,minimum width=2em, text height=1.5ex, text depth=0.25ex]
			{
				&  &  & 0 &	\\			
				& 0 & 0 & U &	\\			
				0 & \Rees^+(\mm; M')(0,1) &  N(0,1) & L(0,1) & 0\\
				0 & \Rees^+(\mm; M') &  N & L & 0\\
				&  &  & V &	\\
				&  &  & 0. &	\\
			};
			\path[-stealth]
			(m-3-1) edge (m-3-2)
			(m-3-2) edge (m-3-3)
			(m-3-3) edge (m-3-4)
			(m-3-4) edge (m-3-5)
			(m-4-1) edge (m-4-2)
			(m-4-2) edge (m-4-3)
			(m-4-3) edge (m-4-4)
			(m-4-4) edge (m-4-5)
			(m-2-2) edge (m-3-2)
			(m-2-3) edge (m-3-3)
			(m-2-4) edge (m-3-4)
			(m-3-2) edge node [left] {$T^{-1}$} (m-4-2)
			(m-3-3) edge node [left] {$T^{-1}$} (m-4-3)
			(m-3-4) edge node [left] {$T^{-1}$} (m-4-4)
			(m-1-4) edge (m-2-4)
			(m-4-4) edge (m-5-4)
			(m-5-4) edge (m-6-4)
			;		
		\end{tikzpicture}	
	\end{equation*}
	By utilizing the snake lemma, we get the exact sequence
	\begin{equation}
		\label{eq_ex_seq_gr_A_N}
		0 \longrightarrow U \longrightarrow \sG_{M'} \longrightarrow N/T^{-1}N \longrightarrow V \longrightarrow 0.
	\end{equation}
	From \autoref{eq_ex_seq_N_gr_gr} and \autoref{eq_ex_seq_gr_A_N}, we obtain
	\begin{equation}
		\label{eq_eq_U_V_kernels}
		U \;\cong\; \Ker\big(\sG_{M'} \rightarrow \sG_M\big) \qquad \text{ and } \qquad V \;\cong\; \Ker\big(\sG_M/\sG_{M'} \rightarrow \sG_{M''}\big).		
	\end{equation}
	Since $N$ and $\Rees^+(\mm; M')$ coincide for nonpositive powers of $T$, it follows that some power of $T^{-1}$ annihilates $L$.
	Hence, $L$ is a finitely generated bigraded module over $\Rees(\mm;B) / \mm^{k}\Rees(\mm;B)$ for some $k > 0$, and the latter is a standard bigraded algebra over the Artinian local ring $R/\mm^k$.
	Furthermore, if $\pp \in \Spec(\sG)$ is a minimal prime of $L$, then it is also a minimal prime of $U$ and $V$ and we get the equality $\length_{\sG_\pp}(U_\pp) = \length_{\sG_\pp}(V_\pp)$.
	So, the cycles associated to $U$ and $V$ coincide, and this settles part (i) by the isomorphisms in \autoref{eq_eq_U_V_kernels}.
	
	By combining \autoref{eq_ex_seq_N_gr_gr}, \autoref{eq_ex_seq_mult_L} and \autoref{eq_ex_seq_gr_A_N}, for all $v \in \ZZ$ and $n \ge 0$, we obtain the equalities 
	$$
		\length_R\big(\left[L\right]_{(v, n+1)}\big) =  \sum_{j=0}^n \length_R\big(\left[\sG_{M'}\right]_{(v,j)}\big) + \sum_{j=0}^n \length_R\big(\left[\sG_{M''}\right]_{(v,j)}\big) - \sum_{j=0}^n \length_R\big(\left[\sG_{M}\right]_{(v,j)}\big)
	$$
	and 
	$$
		\length_R\big(\left[L\right]_{(v, n+1)}\big) = \length_R\big(\left[\sG_{M'}^{\star}\right]_{(v, n)}\big) + \length_R\big(\left[\sG_{M''}^{\star}\right]_{(v, n)}\big) - \length_R\big(\left[\sG_{M}^\star\right]_{(v, n)}\big).
	$$
	The function $Q(v, n-1) = \length_{R}([L]_{(v,n)})$ eventually coincides with the bivariate Hilbert polynomial $P_L(v, n)$ of $L$. 
	Since $P_L(v, n)$ has degree equal to $h$, the result of part (ii) follows. 
	
	Since $h \le r-1$, the equality $m_r^i(M) = m_r^i(M') + m_r^i(M'')$ follows from part (ii) and \autoref{lem_basic_polar}(i).
	Therefore, the proof of the theorem is complete.
\end{proof}

A standard corollary of \autoref{thm_additive_polar} is an expected associativity formula for polar multiplicities. 

\begin{corollary}
	\label{cor_associative_polar}
	Let $M$ be a finitely generated graded $B$-module, and set $r = \dim(\Supp(\widetilde{M}))$.
	We have the equality $m_r^i(M) = \sum_\pp \length_{B_\pp}(M_\pp) m_r^i(B/\pp)$ where the sum runs through the minimal primes of $M$ of dimension $r+1$.
\end{corollary}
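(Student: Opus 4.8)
The plan is to reduce to the case $M = B/\pp$ by means of a prime filtration together with the additive behavior recorded in \autoref{thm_additive_polar}(iii). Two elementary facts about polar multiplicities will be used repeatedly. (a) They are invariant under internal shifts: $m_r^i\big(M(a)\big) = m_r^i(M)$ for every $a \in \ZZ$. This is immediate from \autoref{lem_basic_polar}(i), since passing from $M$ to $M(a)$ replaces $P_{\sG_M^\star}(v,n)$ by $P_{\sG_M^\star}(v+a,n)$, which does not change its top-degree coefficients. (b) $m_r^i(N) = 0$ whenever $\dim\big(\Supp(\widetilde N)\big) < r$. This follows directly from the definition: then $\dim\big(D_{\widetilde N}\big) \le \dim\big(\Supp(\widetilde N)\big) < r$, so the cycle $\big[D_{\widetilde N}\big]_r$ vanishes (equivalently, by \autoref{lem_dim_sG_star} the polynomial $P_{\sG_N^\star}$ has degree $<r$, so all of its degree-$r$ coefficients vanish).

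Next, fix a prime filtration $0 = M_0 \subset M_1 \subset \cdots \subset M_s = M$ by graded $B$-submodules with $M_j/M_{j-1} \cong (B/\qqq_j)(a_j)$ for homogeneous primes $\qqq_j \subset B$ and integers $a_j$. Set $r = \dim\big(\Supp(\widetilde M)\big)$. I claim $m_r^i(M) = \sum_{j=1}^s m_r^i\big(M_j/M_{j-1}\big)$ for $0 \le i \le r$, and prove it by induction on $s$. Applying \autoref{thm_additive_polar}(iii) to $0 \to M_{s-1} \to M \to M/M_{s-1} \to 0$, whose middle term has support of dimension $r$, gives $m_r^i(M) = m_r^i(M_{s-1}) + m_r^i(M/M_{s-1})$. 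If $\dim\big(\Supp(\widetilde{M_{s-1}})\big) = r$, the inductive hypothesis applied to $M_{s-1}$ finishes the job. If $\dim\big(\Supp(\widetilde{M_{s-1}})\big) < r$, then $m_r^i(M_{s-1}) = 0$ by (b), and likewise $m_r^i\big(M_j/M_{j-1}\big) = 0$ for all $j \le s-1$ since each $M_j/M_{j-1}$ is a subquotient of $M_{s-1}$ and hence has support of dimension $< r$; so the claimed identity holds trivially in that range.

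Finally I identify the contributing terms. By (a), $m_r^i\big(M_j/M_{j-1}\big) = m_r^i(B/\qqq_j)$, and by (b) this vanishes unless $\dim\big(\Supp(\widetilde{B/\qqq_j})\big) = r$, i.e.\ unless $\qqq_j \not\supseteq B_+$ and $\dim(B/\qqq_j) = r+1$. In that case $\qqq_j$ must be a minimal prime of $M$: if $\pp$ were a homogeneous prime with $\Ann_B(M) \subseteq \pp \subsetneq \qqq_j$, then $\dim(B/\pp) \ge r+2$, which forces $\pp \supseteq B_+$ (otherwise $\pp$ would be a point of $\Supp(\widetilde M)$ of dimension $\ge r+1$), hence $\qqq_j \supseteq \pp \supseteq B_+$, a contradiction. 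Conversely, every minimal prime $\pp$ of $M$ with $\dim(B/\pp) = r+1$ and $\pp \not\supseteq B_+$ occurs among the $\qqq_j$, since it is an associated prime of $M$, and it occurs exactly $\length_{B_\pp}(M_\pp)$ times, by the standard count of the occurrences of a minimal prime in a prime filtration. Since any minimal prime $\pp$ of $M$ with $\dim(B/\pp) = r+1$ and $\pp \supseteq B_+$ has $\widetilde{B/\pp} = 0$ and hence $m_r^i(B/\pp) = 0$, it may be adjoined to the sum at no cost. Collecting terms therefore yields $m_r^i(M) = \sum_\pp \length_{B_\pp}(M_\pp)\, m_r^i(B/\pp)$, the sum over all minimal primes $\pp$ of $M$ of dimension $r+1$. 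The only genuinely delicate point is this last step — verifying that the $r$-dimensional data detected by the polar multiplicities is carried precisely by the minimal primes of $M$ of dimension $r+1$ (not containing $B_+$); everything else is formal once \autoref{thm_additive_polar}(iii) is available.
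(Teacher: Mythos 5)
Your proof is correct and follows exactly the route the paper takes: its entire proof is the one-line remark that one chooses a prime filtration and applies \autoref{thm_additive_polar}(iii). You have simply supplied the standard details (shift invariance, vanishing in lower dimension, and the count of occurrences of each minimal prime) that the paper leaves implicit.
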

\begin{proof}
	We can choose a prime filtration of $M$ (see, e.g., \cite[Proposition 3.7]{EISEN_COMM}) and apply \autoref{thm_additive_polar}(iii).
\end{proof}

We now describe the behavior of polar multiplicities under hypersurface sections. 
For that purpose, it will be convenient to introduce the following notation. 
For any homogeneous element $b \in \mm^\beta B_\alpha \setminus \mm^{\beta+1} B_\alpha$, we denote by $\iniTerm(b) \in [\sG]_{(\alpha, \beta)}$ the initial form of $b$ in $\sG$.

\begin{definition}[{cf. \cite[Definition 1.2.10]{FLENNER_O_CARROLL_VOGEL}}]
	Let $b \in \mm^\beta B_\alpha \setminus \mm^{\beta+1} B_\alpha$ and set $r = \dim(\Supp(\widetilde{M}))$.
	We say that $b$ is a \emph{$\sG$-parameter on $M$} if we have the strict inequality 
	$$
	\dim\left(\Supp\left(\left(\sG_M/\iniTerm(b)\sG_M\right)^{\widesim{}}\right)\right) \; < \; r-1,
	$$
	where $\left(\sG_M/\iniTerm(b)\sG_M\right)^{\widesim{}}$ denotes the coherent sheaf on $\biProj(\sG)$ associated to $\sG_M/\iniTerm(b)\sG_M$.
	By convention, the dimension of the empty set is $-1$.
\end{definition}

The next theorem gives a sharp description of the behavior of polar multiplicities under hypersurface sections.
It extends the result of \cite[Theorem 1.2.11]{FLENNER_O_CARROLL_VOGEL} to the case of polar multiplicities. 

\begin{theorem}
\label{thm_hypersurface_sect}
Assume \autoref{setup_basic_polar}.
Let $M$ be a finitely generated graded $B$-module and $b \in \mm^\beta B_\alpha \setminus \mm^{\beta+1} B_\alpha$.
Set $r = \dim(\Supp(\widetilde{M}))$ and $N = (0:_Mb)$, and suppose that $\dim(\Supp((M/bM)^{\widesim{}}))<r$.
Then 
$$
m_{r-1}^i(M/bM) \;\ge\; \alpha m_r^i(M) + \beta m_r^{i+1}(M) + m_{r-1}^i(N) \quad \text{ for all \quad $0 \le i \le r-1$},
$$
and equality holds for all $0 \le i \le r-1$ if and only if $b$ is a $\sG$-parameter on $M$.
\end{theorem}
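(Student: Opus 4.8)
The plan is to follow the same strategy as in the proof of \autoref{thm_additive_polar}, passing to the extended Rees algebra in order to translate the statement about polar multiplicities into a statement about bivariate Hilbert polynomials, and then to analyze a single four-term exact sequence coming from multiplication by (the initial form of) $b$. Concretely, I would first reduce to the completed objects: by \autoref{lem_basic_polar}(i) the inequality to be proved is equivalent to
$$
e(r-1-i, i; \sG_{M/bM}^\star) \;\ge\; \alpha\, e(r-i,i;\sG_M^\star) + \beta\, e(r-1-i, i+1; \sG_M^\star) + e(r-1-i,i;\sG_N^\star)
$$
for all $0 \le i \le r-1$, where $\sG^\star_{M/bM}$ is taken over the algebra $\sG^\star = \gr_{(\mm,t)}(B[t])$. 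The hypothesis $\dim(\Supp((M/bM)^{\widesim{}})) < r$ ensures via \autoref{lem_dim_sG_star} that $\deg(P_{\sG_{M/bM}^\star}) \le r-1$, so all the relevant multiplicities are the coefficients of the top-degree part $r-1$ of the respective bivariate polynomials.

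The technical heart is to produce, for the module $M[t]$ over $B[t]$ and the element $b$ (viewed in $B[t]$), a four-term exact sequence of finitely generated bigraded $\sG^\star$-modules relating $\sG_N^\star$, $\sG_M^\star$ (twisted by the bidegree $(\alpha,\beta)$ of $\iniTerm(b)$), $\sG_M^\star$, and $\sG_{M/bM}^\star$, with a correction term $W$ supported in dimension $\le r-1$, with equality of cycles in top dimension precisely when $b$ is a $\sG$-parameter on $M$. I would build this by running the snake lemma on the commutative ladder given by multiplication by $b$ on the extended-Rees level: let $\Rees^+ := \Rees^+(\mm; \cdot)$, consider $0 \to (0:_{\Rees^+(M)} b') \to \Rees^+(M) \xrightarrow{b'} \Rees^+(M) \to \Rees^+(M)/b'\Rees^+(M) \to 0$ where $b'$ is $b$ with its natural filtration degree, compare with $\Rees^+(N)$, $\Rees^+(M/bM)$, and then mod out by $T^{-1}$ exactly as in the proof of \autoref{thm_additive_polar}. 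The associated graded $\gr_\mm(M)/\iniTerm(b)\gr_\mm(M)$ appears as the cokernel, $\sG_{M/bM} = \gr_\mm(M/bM)$ is the target, and the discrepancy between them — together with the discrepancy between $\gr_\mm(N)$ and $(0:_{\gr_\mm M}\iniTerm(b))$ — is governed by a module killed by a power of $T^{-1}$, hence supported over an Artinian base, so that its cycle and those of the relevant kernels/cokernels behave additively. Integrating over $n$ (the $\frac{1}{1-t_2}$ trick, i.e. \autoref{eq_ext_ass_gr} together with \cite[Theorem A]{MIXED_MULT}) converts the $\sG$-level sequence into a relation among the $\sG^\star$-Hilbert functions, yielding $Q(v,n) \ge 0$ for $v,n \gg 0$ with $Q$ a polynomial whose degree is the dimension of the support of the discrepancy sheaf on $\biProj(\sG)$.

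The inequality then follows because that discrepancy module always has support of dimension $\le r-1$ (so it cannot affect the degree-$(r-1)$ coefficients negatively), and the sought equality for all $i$ holds exactly when the discrepancy polynomial has degree $< r-1$, which — unwinding the definition — is equivalent to $\dim(\Supp((\sG_M/\iniTerm(b)\sG_M)^{\widesim{}})) < r-1$, i.e. to $b$ being a $\sG$-parameter on $M$; here one uses that the extra term $(0:_{\gr_\mm M}\iniTerm(b))/\gr_\mm(N)$-type discrepancy is automatically lower-dimensional because $N = (0:_M b)$ is itself a proper submodule whose graded pieces are controlled. The main obstacle I anticipate is the bookkeeping around the \emph{two} separate discrepancies (the one between $\gr_\mm(M/bM)$ and $\gr_\mm(M)/\iniTerm(b)\gr_\mm(M)$, and the one between $\gr_\mm((0:_M b))$ and $(0:_{\gr_\mm M}\iniTerm(b))$): I need to show both are supported in dimension $\le r-1$ and that their top-dimensional cycles cancel in the way required to get equality iff the $\sG$-parameter condition holds. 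Handling the $t$-variable correctly when forming $(0:_{M[t]} b)$ versus $(0:_M b)[t]$ — these agree since $t$ is a polynomial variable — is a routine but necessary check; once that is in place, the degree bound $h \le r-1$ and \autoref{lem_basic_polar}(i) finish the argument exactly as in \autoref{thm_additive_polar}(iii).
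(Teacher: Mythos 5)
Your strategy is essentially the paper's: translate the claim into a statement about the bivariate Hilbert polynomials of the $\sG^\star$-modules, produce the discrepancy as a bigraded module killed by a power of $T^{-1}$ via the snake lemma at the extended Rees level, and extract the combination $\alpha m_r^i(M)+\beta m_r^{i+1}(M)$ from the $(\alpha,\beta)$-shift of polynomials (the content of \autoref{rem_diff_poly}). The one organizational difference is that the paper does not run a single four-term ladder for multiplication by $b$; instead it factors that map through $bM$ and applies the already-proven \autoref{thm_additive_polar} twice, to $0 \to N(-\alpha)\to M(-\alpha)\to bM \to 0$ and to $0 \to bM \to \mm^\beta M \to \mm^\beta M/bM \to 0$, using the elementary identities $P_{\sG_{\mm^\beta M}^\star}(v,n)=P_{\sG_M^\star}(v,n+\beta)-\length_R(M_v/\mm^\beta M_v)$ to realize the $\beta$-shift in the second variable; this sidesteps the direct analysis of $\Rees^+(M)/bT^\beta\Rees^+(M)$ versus $\Rees^+(M/bM)$ that your version requires. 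More importantly, the ``main obstacle'' you flag dissolves: no cancellation between the two discrepancies is needed, because the discrepancy coming from the $N$-side (your comparison of $\gr_\mm(N)$ with $(0:_{\gr_\mm M}\iniTerm(b))$, the paper's $Q_1$) sits inside data supported on $\Supp(\widetilde{N})$, which has dimension at most $r-1$, so its Hilbert polynomial has degree at most $\dim(\Supp(\widetilde{N}))-1 < r-1$ and it never touches the degree-$(r-1)$ coefficients. Only the second discrepancy, $\Ker(\sG_M/\iniTerm(b)\sG_M \to \sG_{M/bM})$, can have degree $r-1$; its nonnegative leading coefficients give the inequality, and its degree drops below $r-1$ exactly when $b$ is a $\sG$-parameter (using the hypothesis to control the support of $\sG_{M/bM}$ itself). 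With that observation supplied, your argument closes.
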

\begin{proof}
	Notice that we also have $\dim(\Supp(\widetilde{N})) < r$.
	By applying \autoref{thm_additive_polar} to the short exact sequences
	\begin{equation*}
		0 \rightarrow N(-\alpha) \rightarrow M(-\alpha) \xrightarrow{ \;\cdot b\;} bM \rightarrow 0 \quad \text{ and } \quad 
		0 \rightarrow bM \rightarrow \mm^\beta M \rightarrow \mm^\beta M / b M \rightarrow 0, 
	\end{equation*}
	we obtain polynomials 
	$$
	Q_1(v, n) =   P_{\sG_{N}^\star}(v-\alpha, n) + P_{\sG_{bM}^\star}(v, n) - P_{\sG_{M}^\star}(v-\alpha, n)
	$$
	and 
	$$
	Q_2(v, n) = P_{\sG_{bM}^\star}(v, n) +  P_{\sG_{\mm^\beta M/bM}^\star}(v, n) - P_{\sG_{\mm^\beta M}^\star}(v, n)
	$$
	with nonnegative leading coefficients such that 
	$$\deg(Q_1) = \dim(\Supp(\Ker(\sG_N \rightarrow \sG_M)^{\widesim{}})) \le \dim(\Supp(\widetilde{N})) - 1 < r-1
	$$ 
	and 
	\begin{align}
		\label{eq_deg_poly_Q2}
		\begin{split}
			\deg(Q_2) &= \dim\left(\Supp\left(
			\Ker\left(\sG_{\mm^\beta M}/\sG_{bM} \rightarrow \sG_{\mm^\beta M/bM}\right)^{\widesim{}}
			\right)\right) \\
			&= \dim\left(\Supp\left(
			\Ker\left(\sG_{M}/\iniTerm(b)\sG_{M} \rightarrow \sG_{M/bM}\right)^{\widesim{}}
			\right)\right) \le r-1.
		\end{split}
	\end{align}
	We have the equalities $P_{\sG_{\mm^\beta M}^\star}(v, n) = P_{\sG_{M}^\star}(v, n+\beta) - \length_R(M_v/\mm^\beta M_v)$ and $P_{\sG_{\mm^\beta M/bM}^\star}(v, n) = P_{\sG_{M/bM}^\star}(v, n+\beta) - \length_R(M_v/\mm^\beta M_v)$.
	Therefore, we obtain
	$$
	Q_2(v, n) - Q_1(v,n) = P_{\sG_{M/bM}^\star}(v, n+\beta) - \left(P_{\sG_M^\star}(v,n+\beta)-P_{\sG_M^\star}(v-\alpha,n)\right) - P_{\sG_N^\star}(v-\alpha,n).
	$$
	Since $\deg(Q_1) < r-1$ and $\deg(Q_2) \le r-1$, \autoref{rem_diff_poly} yields the claimed inequality 
	$$
	m_{r-1}^i(M/bM) \;\ge\; \alpha m_r^i(M) + \beta m_r^{i+1}(M) + m_{r-1}^i(N) \quad \text{ for all \quad $0 \le i \le r-1$}.
	$$
	Moreover, we have an equality for all $0 \le i \le r-1$ if and only if $\deg(Q_2) < r-1$.
	 Due to \autoref{eq_deg_poly_Q2}, the latter condition is equivalent to 
	 $$
	 \dim\left(\Supp\left(
	 \Ker\left(\sG_{M}/\iniTerm(b)\sG_{M} \rightarrow \sG_{M/bM}\right)^{\widesim{}}
	 \right)\right) < r-1.
	 $$
	 Finally, since $\dim(\Supp((\sG_{M/bM})^{\widesim{}}))<r-1$ by assumption, it follows that equality is attained for all $0 \le i \le r-1$ if and only if $b$ is a $\sG$-parameter on $M$.
\end{proof}

\begin{remark}
	\label{rem_diff_poly}
	Let $P(t_1,t_2) \in \QQ[t_1,t_2]$ be a polynomial of degree $r$ and write 
	$$
	P(t_1,t_2)  \, = \, \sum_{i+j=r} \frac{p_{i,j}}{i!j!} t_1^it_2^j \,+ \, \text{(lower degree terms)}.
	$$
	Let $\alpha, \beta \in \QQ$ and set $Q(t_1,t_2) = P(t_1,t_2) - P(t_1-\alpha,t_2-\beta)$.
	We then obtain that 
	$$
	Q(t_1,t_2) \, =  \, \sum_{i+j=r-1} \frac{\alpha p_{i+1,j} + \beta p_{i,j+1}}{i!j!} t_1^it_2^j \,+ \, \text{(lower degree terms)}.
	$$
\end{remark}

Next, we have a proposition that compiles some useful properties of polar multiplicities. 
We are particularly interested in the behavior of polar multiplicities under general hyperplane sections (i.e., cutting with general elements in $\mm B$ or in $B_+ = \bigoplus_{v \ge 1} B_v$).
We also desire more explicit descriptions of the two extremal polar multiplicities $m_r^0$ and $m_r^r$.
Suppose that $\kappa$ is an infinite field, and let $I \subset B$ be an ideal generated by homogeneous elements $f_1,\ldots,f_e \in B$ of the same degree $\delta \ge 0$.
We say that a property $\mathcal{P}$ holds for a \emph{general element} $f \in I_\delta$ if there exists a dense Zariski-open subset $U \subset \kappa^e$ such that whenever $f = a_1f_1+\cdots +a_ef_e$ and the image of $(a_1,\ldots,a_e)$ belongs to $U$, then the property $\mathcal{P}$ holds for $f$.
For a finitely generated graded $B$-module $M$, its \emph{$j$-multiplicity} is given by 
$j_d(M) := e_d(\HL^0(M))$ (see \cite[\S 6.1]{FLENNER_O_CARROLL_VOGEL} for more details on this notion).
The proposition below extends \cite[Proposition 8.2]{KLEIMAN_THORUP_GEOM}.

\begin{proposition}
	\label{prop_gen_polar}
	Let $M$ be a finitely generated graded $B$-module, and set $r = \dim(\Supp(\widetilde{M}))$.
	The following statements hold:
	\begin{enumerate}[\rm (i)]
		\item $m_r^0(M) = j_{r+1}(M)$.
		\item $m_r^r(M) = e_{r}\big(\HH^0(X, \widetilde{M})\big)$.
		\item {\rm(}$\kappa$ infinite{\rm)} Let $y \in B_+$ be a general element.
		Then, for all $0 \le i \le r-1$, $m_{r-1}^{i}(M/yM) = m_r^i(M)$.
		\item {\rm(}$\kappa$ infinite{\rm)} Assume  $\dim(\Supp((M/\mm M)^{\widesim{}})) < r$, and let $x \in \mm$ be a general element. 
		Then, for all $0 \le i \le r-1$, $m_{r-1}^i(M/xM) = m_{r}^{i+1}(M) + m_{r-1}^i((0:_Mx))$.
	\end{enumerate}
\end{proposition}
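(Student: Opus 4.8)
The plan is to prove the four statements of \autoref{prop_gen_polar} by translating them into assertions about the bivariate Hilbert polynomials $P_{\sG_M^\star}$ via \autoref{lem_basic_polar}(i), and then invoking the preceding structural theorems. For (i), I would recall that $m_r^0(M) = e(r, 0; \sG_M^\star)$, which is the coefficient of $v^r$; since the $v$-variable corresponds to the grading of $B$ while the $n$-variable records the $\mm$-adic filtration, $m_r^0$ only sees the top-degree behaviour in $v$, and substituting $M$ by $\HL^0(M)$ (which does not change $B_+$-torsion-free parts in the relevant top dimension) identifies this with the $j$-multiplicity $j_{r+1}(M) = e_{r+1}(\HL^0(M))$; here I would cite the analogous computation in \cite[Lemma 4.3]{KLEIMAN_THORUP_GEOM} together with the definition of $j$-multiplicity from \cite[\S 6.1]{FLENNER_O_CARROLL_VOGEL}. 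For (ii), the other extreme $m_r^r(M) = e(0, r; \sG_M^\star)$ is the coefficient of $n^r$; by \autoref{eq_ext_ass_gr} we have $\length_R([\sG_M^\star]_{(v,n)}) = \length_R(M_v/\mm^{n+1}M_v)$, and summing the leading term in $n$ over the finitely many relevant $v$ recovers the Hilbert-Samuel multiplicity $e_r$ of the finite-length-in-each-degree module $\bigoplus_v M_v = \HH^0(X, \widetilde M)$ (after passing to $\HH^0$, the difference being a module of smaller support dimension).

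For (iii), let $y \in B_+$ be general of degree $\alpha$ (we may take $\alpha = 1$ since $\kappa$ is infinite and $B$ is standard graded) with $\beta = 0$. A general such $y$ is a nonzerodivisor on $M$ modulo $\HL^0_{B_+}$, so $(0:_M y)$ has support dimension $< r$ and in fact its image in $\biProj$ is supported in dimension $\le r-2$; moreover genericity of $y$ ensures $\dim(\Supp((M/yM)^{\widesim{}})) < r$ and, crucially, that $\iniTerm(y)$ is a $\sG$-parameter on $M$ — this is where the role of genericity enters, and it follows because $\sG = \gr_\mm(B)$ is a standard $\NN^2$-graded algebra over the infinite field $\kappa$ so a general degree-$(1,0)$ element avoids all minimal primes of $\sG_M$ of the relevant dimension. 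Then \autoref{thm_hypersurface_sect} applied with $\alpha = 1$, $\beta = 0$ gives equality $m_{r-1}^i(M/yM) = m_r^i(M) + m_{r-1}^i((0:_M y))$ for all $0 \le i \le r-1$, and the last term vanishes by \autoref{lem_vanish} (or directly since $\dim \Supp < r-1$ forces $m_{r-1}^i = 0$). For (iv), I would run the identical argument with a general $x \in \mm$, i.e., $\alpha = 0$, $\beta = 1$: genericity of $x$ makes $\iniTerm(x)$ a $\sG$-parameter on $M$ (here one uses the hypothesis $\dim(\Supp((M/\mm M)^{\widesim{}})) < r$ to guarantee that $M/xM$ has support dimension $< r$, so \autoref{thm_hypersurface_sect} applies), and the equality case of that theorem yields $m_{r-1}^i(M/xM) = \alpha m_r^i(M) + \beta m_r^{i+1}(M) + m_{r-1}^i((0:_M x)) = m_r^{i+1}(M) + m_{r-1}^i((0:_M x))$.

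The main obstacle I anticipate is not any single computation but rather the careful verification in parts (iii) and (iv) that a \emph{general} element of $B_+$ (resp. $\mm$) does indeed give a $\sG$-parameter on $M$ — one must pass from genericity upstairs (in $B_1$ or in $\mm/\mm^2$) to the statement that the corresponding initial form $\iniTerm(y) \in [\sG]_{(1,0)}$ (resp. $\iniTerm(x) \in [\sG]_{(0,1)}$) avoids the finitely many minimal primes of $\sG_M$ of dimension $r+1$ in $\biProj(\sG)$, which requires knowing that the linear system of such initial forms is base-point-free away from those components; the key input is that $\kappa$ is infinite together with the standard-graded structure of $\sG$ over $\kappa$ in the appropriate variable. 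A secondary point of care is checking that for a general $y \in B_+$ one has $\dim \Supp((0:_M y)^{\widesim{}}) \le r-2$ rather than merely $< r$, so that the correction term $m_{r-1}^i((0:_M y))$ genuinely vanishes in (iii); this follows from $y$ avoiding the associated primes of $M$ of dimension $\ge r$, of which there are finitely many.
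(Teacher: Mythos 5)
Parts (i), (iii) and (iv) of your proposal follow essentially the same route as the paper: the limit of $P_{\sG_M^\star}(v,n)$ in $v$ for (i), and \autoref{thm_hypersurface_sect} with $(\alpha,\beta)=(1,0)$ resp.\ $(0,1)$ for (iii) and (iv), with genericity used exactly as you describe to make the initial form a $\sG$-parameter and (in (iii)) to kill the correction term $m_{r-1}^i((0:_My))$ by forcing $(0:_My)^{\widesim{}}$ to have small support. (Your citation of \autoref{lem_vanish} for that last vanishing is the wrong lemma --- it concerns the range of $i$, not small support --- but your parenthetical ``directly since $\dim\Supp<r-1$'' is the correct justification, so this is only a mis-reference.)

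Part (ii), however, contains a genuine error. You identify $\HH^0(X,\widetilde M)$ with $\bigoplus_v M_v$ and speak of ``summing the leading term in $n$ over the finitely many relevant $v$''; neither is right. $\HH^0(X,\widetilde M)$ is a single finitely generated $R$-module (the degree-zero piece of the saturation of $M$), and $m_r^r(M)=e(0,r;\sG_M^\star)$ is extracted by fixing one $v\gg 0$ and letting $n\to\infty$, which gives $m_r^r(M)=e_r(M_v)$ for that fixed $v$. The substantive step you are missing is the passage from $e_r(M_v)\cong e_r\big(\HH^0(X,\widetilde M(v))\big)$ for $v\gg 0$ back to the \emph{zeroth} twist: the paper does this by reducing to $\kappa$ infinite, choosing by prime avoidance an element $b\in B_v$ that is a nonzerodivisor on $\widetilde M$, and using the exact sequence $0\to\widetilde M\to\widetilde M(v)\to (M/bM)^{\widesim{}}(v)\to 0$ together with $\dim\HH^0\big(X,(M/bM)^{\widesim{}}(v)\big)<r$ to conclude $e_r\big(\HH^0(X,\widetilde M)\big)=e_r\big(\HH^0(X,\widetilde M(v))\big)$. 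Your phrase ``the difference being a module of smaller support dimension'' gestures at this, but as written it is applied to the wrong comparison (between $M$ and a nonexistent direct sum), so the step needs to be redone along the lines above.
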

\begin{proof}
	(i) For $n \gg 0$ big enough, we obtain the equalities
	$$
	m^0_r(M) \;=\; \lim_{v \rightarrow \infty} \frac{P_{\sG_M^\star}(v,n)}{v^r/r!} \;=\; \lim_{v \rightarrow \infty} \frac{\length_{R}(M_v/\mm^{n+1}M_v)}{v^r/r!} \;=\; e_{r+1}(M/\mm^{n+1}M).
	$$
	The associativity formula yields $e_{r+1}(M/\mm^{n+1}M) = \sum_{\pp} \length_{B_\pp}(M_\pp) e_{r+1}(B/\pp)$ where $\pp$ runs through the minimal primes of $M$ such that $\pp \supseteq \mm$ and $\dim(B/\pp) = r+1$.
	Then the equality $m_r^0(M) = j_{r+1}(M)$ follows from \cite[Proposition 6.1.3]{FLENNER_O_CARROLL_VOGEL}.
	
	(ii) Throughout this part, we fix $v \gg 0$ big enough.
	We have the equalities 
	$$
	m^r_r(M) \;=\; \lim_{n \rightarrow \infty} \frac{P_{\sG_M^\star}(v,n)}{n^r/r!} \;=\; \lim_{n \rightarrow \infty} \frac{\length_{R}(M_v/\mm^{n+1}M_v)}{n^r/r!} \;=\; e_{r}(M_v).
	$$
	By utilizing the faithfully flat extension $R \rightarrow R[z]_{\mm R[z]}$, we may assume that the residue field $\kappa$ is infinite.
 	Then, by prime avoidance, we may choose an element $b \in B_v$ such that we obtain the short exact sequence of coherent $\OO_X$-modules $0 \rightarrow \widetilde{M} \rightarrow \widetilde{M}(v) \rightarrow (M/bM)^{\widesim{}}(v) \rightarrow 0$.
	Consequently, we have an exact sequence $0 \rightarrow \HH^0(X, \widetilde{M}) \rightarrow \HH^0(X, \widetilde{M}(v)) \rightarrow \HH^0(X, (M/bM)^{\widesim{}}(v))$ of finitely generated $R$-modules.
	Since $\dim(\HH^0(X, (M/bM)^{\widesim{}}(v))) < r$, it follows that $e_r(\HH^0(X, \widetilde{M})) = e_r(\HH^0(X, \widetilde{M}(\nu)))$.
	Thus the isomorphism $M_v \cong \HH^0(X, \widetilde{M}(v))$ (recall that $v \gg 0$) gives the desired equality $m_r^r(M) = e_r(\HH^0(X, \widetilde{M}))$.
	
	(iii)
	By prime avoidance, we may assume that $(0:_My)^{\widesim{}}=0$ and $\dim(\Supp((M/yM)^{\widesim{}})) < r$.
	Since $y \in B_+$ is general, we also have that $\iniTerm(y) \in [\sG]_{(1,0)}$ is general (see \autoref{rem_general_initial_form} below), and thus we may assume that $y$ is a $\sG$-parameter on $M$.
	Hence the equality $m_{r-1}^{i}(M/yM) = m_r^i(M)$ follows from \autoref{thm_hypersurface_sect}.
		
	(iv) The proof follows similarly to part (iii). 
	Here we need the assumption $\dim(\Supp((M/\mm M)^{\widesim{}})) < r$ to attain the condition $\dim(\Supp((M/x M)^{\widesim{}})) < r$.
\end{proof}

\begin{remark}
	\label{rem_general_initial_form}
	($\kappa$ infinite). 
	Consider the ideal $I = \left(\mm^\beta B_\alpha\right) \subset B$ for some $\alpha,\beta \in \NN$.
	Let $f_1,\ldots,f_e$ be a minimal generating set of $I$.
	By Nakayama's lemma, $\overline{f_1},\ldots,\overline{f_e}$ give a $\kappa$-basis of $I/(\mm B + B_+)I \cong \mm^\beta B_\alpha/\mm^{\beta+1}B_\alpha = \left[\sG\right]_{(\alpha,\beta)}$.
	Therefore, if $f = a_1f_1+\cdots+a_ef_e \in I_\alpha$ is a general element, then $\iniTerm(f) = \overline{a_1}\overline{f_1}+\cdots+\overline{a_e}\overline{f_e} \in \left[\sG\right]_{(\alpha,\beta)}$ is also a general element.
\end{remark}

\begin{remark}
	\label{rem_length_formula_y}
	($\kappa$ infinite).
	 By successively applying  \autoref{prop_gen_polar}(iii) and then utilizing \autoref{prop_gen_polar}(ii), we obtain the formula 
	$$
	m_r^i(M) \;=\; e_i\left(\HH^0\left(X,\, (M/(y_1,\ldots,y_{r-i})M)^{\widesim{}}\right)\right)
	$$
	with $y_1,\ldots,y_{r-i} \in B_+$ a sequence of general elements.
	In the next section, we also provide a length formula for polar multiplicities by cutting with general elements in $\mm \subset R$.
\end{remark}

Given a prime ideal $\pp \in \Spec(R)$, we have that $B_\pp = B \otimes_R R_\pp$ is a standard graded algebra over $R_\pp$ and that $M_\pp = M \otimes_R R_\pp$ is a finitely generated graded $B_\pp$-module. 
Then a natural question is to compare the polar multiplicities of $M$ and $M_\pp$.
The following lemma shows that polar multiplicities tend to not increase under localizations, thus mimicking the well-known behavior of Hilbert-Samuel multiplicities (see \cite[Theorem 40.1]{NAGATA}, \cite{BENNETT, LECH})).
Our proof below is inspired by \cite[Proposition 2.7]{PTUV}.

\begin{lemma}
	\label{lem_localiz_polar}
	Suppose that $R$ is equidimensional and catenary.
	Let $\pp \in \Spec(R)$ be a prime ideal and assume that $R/\pp$ is analytically unramified.
	Let $M$ be a finitely generated graded $B$-module.
	Set $r = \dim(\Supp(\widetilde{M}))$ and $r' = \dim(\Supp(\widetilde{M_\pp}))$, and assume that $\dim(R/\pp) = r-r'$.
	Then 
	$$
	m_r^i(M) \;\ge\; m_{r'}^{i-r+r'}(M_\pp).
	$$
\end{lemma}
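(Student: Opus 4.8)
The plan is to cut by general hyperplane sections in $B_+$ so as to reduce the inequality to the case of the top polar multiplicity $m^r_r$, which by \autoref{prop_gen_polar}(ii) equals an ordinary Hilbert--Samuel multiplicity of a finitely generated $R$-module; the statement then becomes the classical fact that the Hilbert--Samuel multiplicity of a local ring does not increase under localization. First the harmless reductions. If $d := \dim(R/\pp) = r - r'$ equals $0$, then $\pp = \mm$ and there is nothing to prove, so assume $d \ge 1$, and also $i \ge d$ (otherwise the right-hand side is not defined). Fixing a generating set $f_1,\dots,f_e$ of the $R$-module $B_1$, adjoin indeterminates $\fu = (u_{l,j})_{1\le l\le e,\,1\le j\le r-i}$ and replace $R$ by $R(\fu) := R[\fu]_{\mm R[\fu]}$. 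This faithfully flat local extension preserves equidimensionality and catenarity, preserves $\dim(R/\pp)$, $r$ and $r'$, preserves the analytic unramifiedness of $R/\pp$, and does not change any of the functions $(v,n)\mapsto \length_R(M_v/\mm^{n+1}M_v)$ (hence no polar multiplicity in the statement changes); moreover, since $R_\pp(\fu) = \big(R(\fu)\big)_{\pp R(\fu)}$, both $R(\fu)$ and its localization at $\pp$ now have infinite residue field. Thus we may assume $\kappa$ and $\kappa(\pp)$ infinite.

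Put $y_j := \sum_{l=1}^e u_{l,j}f_l \in B_1$ for $1\le j\le r-i$. Since the $u_{l,j}$ are transcendental over both residue fields, $(y_1,\dots,y_{r-i})$ is a general sequence (in the sense of \autoref{prop_gen_polar}) simultaneously for $M$ over $R$ and for $M_\pp$ over $R_\pp$. Applying \autoref{rem_length_formula_y} over $R$ and over $R_\pp$ --- noting that $r'-(i-d) = r-i$ because $r = r'+d$ --- gives
$$
m^i_r(M) \,=\, e_i\big(\mm;\,N\big), \qquad m^{\,i-d}_{r'}(M_\pp) \,=\, e_{i-d}\big(\pp R_\pp;\,N'\big),
$$
where $N := \HH^0\big(X,\,(M/(y_1,\dots,y_{r-i})M)^{\widesim{}}\big)$ and $N' := \HH^0\big(X_\pp,\,(M_\pp/(y_1,\dots,y_{r-i})M_\pp)^{\widesim{}}\big)$ with $X_\pp := \Proj(B_\pp)$, and $e_\bullet(\,\cdot\,;\cdot\,)$ denotes the normalized Hilbert--Samuel coefficient (taken to be $0$ when the relevant dimension is too small). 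As $M_\pp/(y_1,\dots,y_{r-i})M_\pp = \big(M/(y_1,\dots,y_{r-i})M\big)_\pp$ and the formation of $\HH^0(\Proj(-),\widetilde{(-)})$ commutes with the flat base change $R\to R_\pp$, we get $N'\cong N_\pp$. Hence it remains to prove, for the finitely generated $R$-module $N$, that $e_i(\mm;\,N)\ge e_{i-d}(\pp R_\pp;\,N_\pp)$ with $d=\dim(R/\pp)$.

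This residual inequality is the heart of the proof and its only non-formal ingredient. Apply the associativity formula for Hilbert--Samuel multiplicities to both sides. Because $R$ is equidimensional and catenary, any prime $\qqq\subseteq\pp$ with $\HT(\pp/\qqq) = i-d$ has $\dim(R/\qqq) = \HT(\pp/\qqq)+\dim(R/\pp) = i$; so the minimal primes contributing to $e_{i-d}(\pp R_\pp;N_\pp)$ are exactly the minimal primes $\qqq$ of $N$ with $\dim(R/\qqq)=i$ and $\qqq\subseteq\pp$, and each of them contributes with the same length $\length_{R_\qqq}(N_\qqq)$ to $e_i(\mm;N)$. Therefore it suffices to treat $N = R/\qqq$, i.e.\ to show $e_i(\mm;R/\qqq)\ge e_{i-d}(\pp R_\pp;(R/\qqq)_\pp)$ where $R/\qqq$ is a catenary local domain of dimension $i$, $(R/\qqq)/(\pp/\qqq)= R/\pp$ is analytically unramified, and $\HT(\pp/\qqq)+\dim(R/\pp)=i$ --- i.e.\ these are the top Hilbert--Samuel multiplicities of $R/\qqq$ at $\mm$ and at $\pp$. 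This is the classical statement that the Hilbert--Samuel multiplicity does not increase under localization, valid under exactly these hypotheses; see \cite[Theorem 40.1]{NAGATA}, \cite{BENNETT}, \cite{LECH}, and, in the form closest to the present argument, \cite[Proposition 2.7]{PTUV}. I expect this last step to be the main obstacle, and it is where all the standing hypotheses are used: equidimensionality and catenarity make the dimension bookkeeping in the reduction to the domain case work, while analytic unramifiedness of $R/\pp$ is what guarantees the classical semicontinuity of multiplicity in the absence of excellence.
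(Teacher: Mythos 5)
Your proof is correct and takes essentially the same route as the paper's: cut with $r-i$ general linear forms chosen to work simultaneously for $M$ and $M_\pp$ (the paper cites \cite[Lemma 2.6]{PTUV} where you adjoin indeterminates to build generic elements explicitly, a cosmetic difference), reduce via \autoref{rem_length_formula_y} and flat base change to comparing $e_i(N)$ with $e_{i-d}(N_\pp)$, and finish with the associativity formula plus Nagata's localization theorem for Hilbert--Samuel multiplicities.
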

\begin{proof}
	By making a purely transcendental residue field extension as in the proof of \autoref{prop_gen_polar}(ii), we may assume the $\kappa$ is not an algebraic extension of a finite field.
	Then \cite[Lemma 2.6]{PTUV} yields a sequence of elements $y_1,\ldots,y_{r-i} \in B_+$ which can be used to apply \autoref{rem_length_formula_y} to both $M$ and $M_\pp$.
	So, we obtain the equalities 
	$$
	m_r^i(M) \;=\; e_i\left(\HH^0\left(X,\, (M/(y_1,\ldots,y_{r-i})M)^{\widesim{}}\right)\right)
	$$
	and 
	$$
	m_{r'}^{i-r+r'}(M_\pp) \;=\; e_{i-r+r'}\left(\HH^0\left(X \times_{\Spec(R)} \Spec(R_\pp) ,\, (M_\pp/(y_1,\ldots,y_{r-i})M_\pp)^{\widesim{}}\right)\right).
	$$
	Let $N := \HH^0\left(X,\, (M/(y_1,\ldots,y_{r-i})M)^{\widesim{}}\right)$.
	By flat base change, we get $m_{r'}^{i-r+r'}(M_\pp) = e_{i-r+r'}(N_\pp)$.
	Let $\delta:=\dim(R)$.
	From the associativity formula and the fact that $R$ and $R_\pp$ are equidimensional and catenary, we obtain the equalities
	$$
	e_i(N) \;=\; \sum_{\substack{\qqq \in \Min(N)\\
	\dim(R/\qqq)=i}} \length_{R_\qqq}(N_\qqq)e(R/\qqq) \;=\; \sum_{\substack{\qqq \in \Min(N)\\
	\HT(\qqq)=\delta-i}} \length_{R_\qqq}(N_\qqq)e(R/\qqq)
	$$
	and 
	$$
	e_{i-r+r'}(N_\pp) = \sum_{\substack{\qqq \in \Min(N), \qqq \subseteq \pp \\
		\dim((R/\qqq)_\pp)=i-r+r'}} \length_{R_\qqq}(N_\qqq)e((R/\qqq)_\pp)
	= \sum_{\substack{\qqq \in \Min(N), \qqq \subseteq \pp \\
			\HT(\qqq)=\delta-i}} \length_{R_\qqq}(N_\qqq)e((R/\qqq)_\pp).
	$$
	To conclude the proof, we only need to show that $e(R/\qqq) \ge e((R/\qqq)_\pp)$ whenever $\qqq \subseteq \pp$.
	Since $\HT(\pp/\qqq) + \dim(R/\pp) = \dim(R/\qqq)$ and $R/\pp$ is analytically unramified, we obtain $e(R/\qqq) \ge e((R/\qqq)_\pp)$ from \cite[Theorem 40.1]{NAGATA} (also, see \cite{BENNETT, LECH}).
\end{proof}

\subsection{Polar multiplicities with respect to an ideal of linear forms}\hfill

In this subsection, we define and study a notion of polar multiplicities with respect to an ideal of linear forms in $B$.
Our results in this subsection profited from \cite{UV_CRIT_MOD}.
Let $I \subset B$ be an ideal generated by linear forms in $B$.
Let $M$ be a finitely generated graded $B$-module, and set $r = \dim(\Supp(\widetilde{M}))$.
We consider $\Rees^+(I) = B[IT, T^{-1}]$ with an ``internal grading''. 
This means that $\deg(T) = 0$ and that $\Rees^+(I)$ is a standard graded algebra over $R[T^{-1}]$ since $I \subset B$ is generated by linear forms.
We then have that $G:=\gr_I(\sB) \cong \Rees^+(I) / T^{-1} \Rees^+(I) = \bigoplus_{k=0}^\infty I^k/I^{k+1}$ becomes a standard graded $R$-algebra.
Furthermore, $\gr_I(M)$ is a finitely generated graded module over $G$ and the corresponding $v$-graded part is given by
$$
\big[\gr_I(M)\big]_v \;=\; \bigoplus_{k=0}^\infty \left[I^kM\big/ I^{k+1}M\right]_v.
$$
Here, our main interest is in the following invariants.

\begin{definition}
	We set $m_r^i(I, M) := m_r^i\left(\gr_I(M)\right)$ for all $0 \le i \le r$.
\end{definition}

The next proposition shows that these polar multiplicities are also additive.

\begin{proposition}
	\label{prop_add_polar_ideal}
	Let $I \subset \sB$ be an ideal generated by linear forms.
	Let $0 \rightarrow M' \rightarrow M \rightarrow M'' \rightarrow 0$ be a short exact sequence of finitely generated graded $\sB$-modules. 
	Set $r = \dim(\Supp(\widetilde{M}))$.
	Then we have the equality 
	$$
	m_r^i(I, M) \;=\; m_r^i(I, M') + m_r^i(I, M'')
	$$
	for all $0 \le i \le r$.
\end{proposition}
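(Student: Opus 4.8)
The plan is to imitate the proof of \autoref{thm_additive_polar}, now with the extended Rees algebra $\Rees^+(I;\sB) = \sB[IT,T^{-1}]$ playing the role of $\Rees^+(\mm;\sB)$, and to reduce the assertion to \autoref{thm_additive_polar}(iii) applied to the standard graded $R$-algebra $G = \gr_I(\sB)$, together with a single ``kernel equals cokernel'' cycle computation. We may assume $\dim(\Supp(\widetilde{\gr_I(M)})) = r$, since otherwise all three polar multiplicities vanish; and because $\dim(\gr_I(-)) \le \dim(-)$ for finitely generated graded $\sB$-modules, every polar multiplicity appearing below is defined for this common $r$. Set $W := \IM\big(\gr_I(M') \to \gr_I(M)\big)$, $U := \Ker\big(\gr_I(M') \to \gr_I(M)\big)$ and $V := \Ker\big(\gr_I(M)/\gr_I(M') \to \gr_I(M'')\big)$ (here $\gr_I(M)/\gr_I(M')$ means $\gr_I(M)/W$); note that $\gr_I(M) \to \gr_I(M'')$ is surjective, being so in each $I$-adic degree. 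These fit into three short exact sequences of finitely generated graded $G$-modules, namely $0 \to U \to \gr_I(M') \to W \to 0$, $0 \to W \to \gr_I(M) \to \gr_I(M)/W \to 0$ and $0 \to V \to \gr_I(M)/W \to \gr_I(M'') \to 0$. Applying \autoref{thm_additive_polar}(iii) to each (directly, or trivially when the relevant support has dimension $< r$) and adding, one obtains
$$
m_r^i(I,M) \;=\; m_r^i(I,M') + m_r^i(I,M'') + m_r^i(V) - m_r^i(U) \qquad \text{for all } 0 \le i \le r,
$$
so it remains to show $m_r^i(U) = m_r^i(V)$ for all $i$.

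To isolate the correction term I would proceed as in the proof of \autoref{thm_additive_polar}: write $\Rees^+(I;M) = \bigoplus_{k \in \ZZ} I^kM\,T^k \subseteq M[T,T^{-1}]$ (with $I^kM = M$ for $k \le 0$), on which $T^{-1}$ is a nonzerodivisor, and set $C := \Rees^+(I;M)/\Rees^+(I;M')$ and $L := \Ker\big(C \to \Rees^+(I;M'')\big)$, so that the degree-$k$ piece of $L$ is $(I^kM \cap M')/I^kM'$ for $k \ge 1$ and $0$ otherwise. By the Artin--Rees lemma there is a $c > 0$ with $I^kM \cap M' \subseteq I^{k-c}M'$ for $k \gg 0$, whence multiplication by $(T^{-1})^c$ is the zero endomorphism of $L$; in particular $\Supp(L) \subseteq V(T^{-1}) \cong \Spec(G)$. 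Tensoring the short exact sequences $0 \to \Rees^+(I;M') \to \Rees^+(I;M) \to C \to 0$ and $0 \to L \to C \to \Rees^+(I;M'') \to 0$ with $G = \Rees^+(I;\sB)/(T^{-1})$ over $\Rees^+(I;\sB)$ and chasing the long exact sequences — using that $T^{-1}$ is a nonzerodivisor on the Rees modules of $M'$, $M$, and $M''$ — yields $G$-module isomorphisms $U \cong \Ker(T^{-1}\colon L \to L)$ and $V \cong \Coker(T^{-1}\colon L \to L)$, fitting into the exact sequence $0 \to U \to L \xrightarrow{\,T^{-1}\,} L \to V \to 0$ (up to an internal-degree shift). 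This is precisely the $I$-analogue of the snake-lemma argument used for \autoref{thm_additive_polar}(i).

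It remains to compare cycles. As $U$ and $V$ are annihilated by $T^{-1}$, they are genuine $G$-modules, and by \autoref{cor_associative_polar} it suffices to show that $U$ and $V$ determine the same cycle on $\Proj(G)$. For any $\pp \in \Supp(L)$ one has $T^{-1} \in \pp$, and since $T^{-1}$ is a nilpotent endomorphism of the nonzero module $L_\pp$, both its kernel and its cokernel there are nonzero; hence $\Supp(U) = \Supp(V) = \Supp(L)$, so $U$, $V$ and $L$ have the same minimal primes. At such a minimal prime $\pp$ the localization $L_\pp$ has finite length, and localizing the exact sequence above gives $\length(U_\pp) = \length(L_\pp) - \length(T^{-1}L_\pp) = \length(V_\pp)$. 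Therefore $U$ and $V$ have equal cycles, hence $m_r^i(U) = m_r^i(V)$ for all $i$, and the proposition follows.

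The main obstacle is that $\gr_I(-)$ fails to be exact, so the real content is the cancellation $m_r^i(V) = m_r^i(U)$ of the two correction terms. This hinges on recognizing $U$ and $V$ as the $T^{-1}$-torsion and $T^{-1}$-cotorsion of one module $L$ that some power of $T^{-1}$ kills — the Artin--Rees lemma being exactly what makes this possible — and on carefully keeping the extended-Rees argument in the $T$-direction, the deformation variable attached to the ideal $I$, separate from the $\mm$-adic filtration that is built into the polar multiplicities $m_r^i$ of the $G$-modules at hand.
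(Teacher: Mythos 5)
Your proof is correct and follows essentially the same route as the paper's: both pass to the extended Rees modules with the internal grading ($\deg T = 0$), identify the same error module $L$ with $L_k = (I^kM\cap M')/I^kM'$, use Artin--Rees to see that a power of $T^{-1}$ kills $L$, realize the two correction terms $U$ and $V$ as the kernel and cokernel of $T^{-1}$ on $L$ via the snake lemma, and conclude by comparing cycles/lengths at the common minimal primes before invoking the additivity of \autoref{thm_additive_polar}. The only difference is bookkeeping (you split off the image $W$ into three short exact sequences of $G$-modules, while the paper routes through $N/T^{-1}N$), which does not change the substance.
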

\begin{proof}
	We follow a similar approach to \autoref{thm_additive_polar}.
	Let $N := \Ker\left(\Rees^+(I; M) \rightarrow \Rees^+(I; M'')\right)$ and $L := N / \Rees^+(I; M')$.
	Following the steps that yielded \autoref{eq_ex_seq_N_gr_gr}, \autoref{eq_ex_seq_mult_L} and \autoref{eq_ex_seq_gr_A_N}, we now have the short exact sequences
	\begin{align*}
		&0 \longrightarrow N/T^{-1}N \longrightarrow \gr_I(M) \longrightarrow \gr_I(M'') \longrightarrow 0, \\
		&0 \longrightarrow U \longrightarrow L \xrightarrow{\quad T^{-1} \quad} L \longrightarrow V \longrightarrow 0, \\
		&0 \longrightarrow U \longrightarrow \gr_I(M') \longrightarrow N/T^{-1}N \longrightarrow V \longrightarrow 0.
	\end{align*}
 	We have that $\dim(\Supp(\widetilde{L})) \le r$.
	Then applying \autoref{thm_additive_polar} gives the equality
	\[
	m_r^i\left(\gr_I(M)\right) = m_r^i\left(\gr_I(M')\right) + m_r^i\left(\gr_I(M'')\right).
	\]
	This concludes the proof.
\end{proof}

The following theorem shows that the polar multiplicities considered in this subsection do not change when passing to a reduction.

\begin{theorem}
	\label{thm_reduction_ideals}
	Assume \autoref{setup_basic_polar}.
	Let $M$ be a finitely generated graded $\sB$-module and $I \subseteq J$ be $\sB$-ideals generated by linear forms.
	Set $r = \dim(\Supp(\widetilde{M}))$.
	If $I$ is a reduction of $J$ on $M$, then $m_r^i(I, M) = m_r^i(J, M)$ for all $0 \le i \le r$.
\end{theorem}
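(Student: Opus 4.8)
The plan is to reduce the statement to a comparison of two bivariate Hilbert functions and then exploit the reduction hypothesis $JM^{n+1} = IJ^nM$ for $n \gg 0$ to control the relevant leading coefficients. First I would recall that $I$ being a reduction of $J$ on $M$ means $I^{n+1}J^kM = I^n J^{k+1}M$ for all $n$ and all $k \gg 0$, equivalently $\Rees^+(I;M)$ and $\Rees^+(J;M)$ agree in high internal degree; consequently $\gr_I(M)$ and $\gr_J(M)$ become finitely generated modules over $G = \gr_I(B)$ and $\gr_J(B)$ respectively, with $\gr_J(M)$ module-finite over $\gr_I(B)$ after suitable regrading. The key object is the kernel $W$ of the natural surjection of graded $G$-modules (induced by $I \subseteq J$, so $I^k \subseteq J^k$) relating the two associated graded modules; since $I$ is a reduction of $J$ on $M$, $W$ is annihilated by a power of $J/I$ and hence $\dim(\Supp(\widetilde{W})) < r$ — this is the crucial dimension drop.

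Second, I would feed this into \autoref{thm_additive_polar}: applying it to an appropriate short exact sequence built from $\gr_I(M)$, $\gr_J(M)$ and the comparison module yields that the difference of the associated bivariate Hilbert polynomials $P_{(\gr_I(M))^\star}(v,n) - P_{(\gr_J(M))^\star}(v,n)$ has degree strictly less than $r$. By \autoref{lem_basic_polar}(i), the coefficients $e(r-i,i;\cdot)$ of the top-degree part of these polynomials are exactly the polar multiplicities $m_r^i(I,M)$ and $m_r^i(J,M)$, so the degree drop forces $m_r^i(I,M) = m_r^i(J,M)$ for all $0 \le i \le r$. Here I should be careful that the internal $B$-grading on $\gr_I(M)$ and $\gr_J(M)$ matches up correctly under $I \subseteq J$, and that passing to the $\star$-completed objects (introducing $t$) is compatible with the reduction hypothesis — which it is, since reductions are stable under the flat base change $R \to R[t]$.

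The main obstacle I anticipate is setting up the correct comparison sequence of bigraded $G$-modules so that \autoref{thm_additive_polar} applies cleanly. The subtlety is that $\gr_I(M)$ and $\gr_J(M)$ are modules over \emph{different} rings ($\gr_I(B)$ versus $\gr_J(B)$), so one cannot directly write a short exact sequence; instead I would work inside $\bigoplus_k J^kM/J^{k+1}M$ or, better, use the surjection $\gr_I(M) \twoheadrightarrow \bigoplus_k (I^kM + J^{k+1}M)/J^{k+1}M \hookrightarrow \gr_J(M)$ and analyze the kernel and cokernel separately, each of which is supported in dimension $< r$ by the reduction property. An alternative, possibly cleaner, route mirrors the proof technique of \autoref{prop_add_polar_ideal}: build $N := \Ker(\Rees^+(J;M) \to \text{something})$ and $L$ as a quotient, use the snake lemma on multiplication by $T^{-1}$, and observe that because $\Rees^+(I;M)$ and $\Rees^+(J;M)$ coincide in high degree, the relevant $L$ is finitely generated over a ring of the form $G/(\text{ideal of dimension} \le r)$, giving the degree bound. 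Once the dimension drop is in hand, the conclusion is immediate from \autoref{lem_basic_polar}(i) and \autoref{rem_diff_poly}.
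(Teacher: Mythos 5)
There is a genuine gap at the heart of your argument: the claimed dimension drop for the comparison module is false. Writing $\phi:\gr_I(M)\to\gr_J(M)$ for the natural map induced by $I^k\subseteq J^k$, the cokernel is $\bigoplus_k J^kM/(I^kM+J^{k+1}M)$ and the kernel is $\bigoplus_k \left(I^kM\cap J^{k+1}M+I^{k+1}M\right)/I^{k+1}M$. These are indeed annihilated by initial forms of elements of $J$ not in $I$, but such elements may act nilpotently on a component of maximal dimension, so no bound $\dim(\Supp(\widetilde{W}))<r$ follows. Concretely, take $R=\kappa$, $B=\kappa[x,y,z]/(y^2)$, $M=B$, $I=(x)\subseteq J=(x,y)$; then $IJ=J^2$, $r=1$, and both $\Ker\phi$ and $\Coker\phi$ are isomorphic (up to shift) to $\kappa[z,T]$ as $\gr_I(B)$-modules, whose support in $\Proj(\gr_I(B))$ has dimension $1=r$. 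So neither module is small, and the step ``each of which is supported in dimension $<r$ by the reduction property'' fails; likewise your assertion that $\Rees^+(I;M)$ and $\Rees^+(J;M)$ agree in high internal degree is not true (in the example $[I^nM]_v\ne[J^nM]_v$ for all $v\ge n$).

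What saves the comparison-map route is not smallness but cancellation: applying the snake lemma to $0\to\Rees^+(I;M)\to\Rees^+(J;M)\to L\to 0$ with the vertical map $T^{-1}$ identifies $\Ker\phi$ and $\Coker\phi$ with $(0:_LT^{-1})$ and $L/T^{-1}L$, and since a power of $T^{-1}$ annihilates $L$ (here one uses $J^{n}M\subseteq I^{n-k_0}M$), these two have equal associated cycles exactly as in the proof of \autoref{thm_additive_polar}(i), hence equal $m_r^i$ by \autoref{cor_associative_polar}; additivity then yields $m_r^i(I,M)=m_r^i(J,M)$. (In the example above, kernel and cokernel do carry the same cycle.) The paper instead avoids the comparison map altogether: it filters $M$ by $M\supseteq JM\supseteq\cdots\supseteq J^kM$, notes that the $I$-adic and $J$-adic filtrations literally coincide on $J^kM$ (so $\gr_I(J^kM)=\gr_J(J^kM)$) and that both $I$ and $J$ annihilate each $J^jM/J^{j+1}M$, and concludes from the additivity of $m_r^i(I,-)$ and $m_r^i(J,-)$ proved in \autoref{prop_add_polar_ideal}. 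You should adopt one of these two corrected mechanisms; as written, the degree-drop you rely on does not exist.
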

\begin{proof}
	By assumption, we have $I\cdot J^{k}M = J^{k+1}M = J \cdot J^kM$ for some positive integer $k \ge 1$.
	Thus we get $\gr_{I}(J^kM) = \gr_J(J^kM)$.
	For each $j \ge 0$, notice that $\gr_{I}\left(J^jM/J^{j+1}M\right) \cong J^jM/J^{j+1}M \cong \gr_{J}\left(J^jM/J^{j+1}M\right)$.
	We have the short exact sequences 
	\[
	0 \rightarrow J^kM \rightarrow M \rightarrow M/J^kM \rightarrow 0
	\]
	and 
	\[
	0 \rightarrow J^jM/J^{j+1}M \rightarrow M/J^{j+1}M \rightarrow M/J^jM \rightarrow 0
	\]
	for all $j \ge 0$.
	Hence by applying \autoref{prop_add_polar_ideal}, we obtain the equalities
	\begin{align*}
		m_r^i(I, M) &= m_r^i\left(I, J^kM\right) + m_r^i\left(I, M/J^kM\right) \\
		&= m_r^i\left(I, J^kM\right) + \sum_{j=0}^{k-1}m_r^i\left(I, J^jM/J^{j+1}M\right)\\
		&= m_r^i\left(J, J^kM\right) + \sum_{j=0}^{k-1}m_r^i\left(J, J^jM/J^{j+1}M\right)\\
		&= m_r^i\left(J, J^kM\right) + m_r^i\left(J, M/J^kM\right) \\
		&= m_r^i(J, M)
	\end{align*}
	that settle the claim of the theorem.
\end{proof}

\section{A length formula for polar multiplicities}
\label{sect_length_formula}

In this section, we provide a length formula for polar multiplicities by utilizing general elements in the maximal ideal of the local ring $R$ (see \autoref{rem_length_formula_y}). 
For that purpose, we can use the (local) deformation to the normal cone of Achilles and Manaresi \cite{ACHILLES_MANARESI_MULT}.
Here we continue using \autoref{setup_basic_polar}, but we now fix the following setup with further data.

\begin{setup}
	\label{setup_regular_elements}
	Assume \autoref{setup_basic_polar} with $\kappa$ an infinite field.
	Recall that $X = \Proj(B)$.
	Let $M$ be a finitely generated graded $B$-module, and set $r := \dim(\Supp(\widetilde{M}))$.
	Let $\sG:=\gr_{\mm B}(B)$ and $\sG_+ := \bigoplus_{n \ge 1} [\sG]_{n} = \bigoplus_{n \ge 1} \mm^nM/\mm^{n+1}M$.
	Let $x_0 := 0 \in B$ and $f_0:=0 \in \sG$.
	Let $\xx := (x_1,\ldots,x_r)$ be a sequence of general elements in $\mm$ and $\ff := (f_1=\iniTerm(x_1), \ldots,f_r=\iniTerm(x_r))$ be the corresponding sequence of initial forms in $[\sG]_{(0,1)} \cong \mm/\mm^2$.
	(One can also work in terms of filter-regular sequences as in \cite{ACHILLES_MANARESI_MULT}.)
\end{setup}

For a submodule $N \subseteq M$, the corresponding initial submodule in $\sG_M = \gr_\mm(M)$ is given by  
$$
\init(N) \;:=\; \bigoplus_{n=0}^\infty \left(N \cap \mm^nM + \mm^{n+1}M\right) \big/ \mm^{n+1}M \;\subset\; \sG_M.
$$
We utilize two \emph{intersection algorithms} inspired by the work of Achilles and Manaresi \cite{ACHILLES_MANARESI_MULT}.
These algorithms yield certain St\"uckrad-Vogel cycles that will be used to compute polar multiplicities.

\begin{definition}[St\"uckrad-Vogel cycle of $M$ with respect to $\xx$]
	Set $N_{-1} := 0 \subset M$.
	Inductively, we set $N_i := \left(N_{i-1} + x_iM\right) :_M \mm^\infty \subset M$ for $0 \le i \le r$.
	We have the cycle 
	$$
	\nu_i(\xx; M) \;:=\; \sum_\pp \length_{\sB_\pp}\left(M_\pp \big/ \left(N_{i-1} + x_iM\right)_\pp \right) \big[\Proj(B/\pp)\big] \; \in \; Z_{r-i}\big(X \times_{\Spec(R)} \Spec(\kappa)\big),
	$$
	where the sum runs over the  minimal primes of $M \big/ \left(N_{i-1} + x_iM\right)$ of dimension $r-i+1$ that contain $\mm M$.
\end{definition}

Via the natural maps $\sB \twoheadrightarrow \sB/\mm \sB \cong  [\sG]_{(*,0)} \hookrightarrow \sG$, the contraction of a prime ideal $\fP \subset \sG$ containing $\sG_+$ yields a prime ideal $\pp = \fP \cap \sB \subset \sB$  containing $\mm\sB$ and that satisfies $\sG/\fP \cong B/\pp$.
As a consequence, a prime ideal $\fP \subset \sG$ containing $\sG_+$ and contracting to a homogeneous prime ideal $\pp = \fP \cap B \subset B$ automatically gives a cycle in $X \times_{\Spec(R)} \Spec(\kappa)$.

\begin{definition}[St\"uckrad-Vogel cycle of $\sG_M$ with respect to $\ff$]
	Set $L_{-1} := 0 \subset \sG_M$.
	Inductively, we set $L_i := \left(L_{i-1} + f_i\sG_M\right) :_{\sG_M} \sG_+^\infty \subset \sG_M$ for $0 \le i \le r$.
	We have the cycle 
	$$
	\nu_i(\ff; \sG_M) \;:=\; \sum_\fP \length_{\sG_\fP}\left({(\sG_M)}_\fP \big/ \left(L_{i-1} + f_i\sG_M\right)_\fP \right) \big[\Proj(\sG/\fP)\big] \; \in \; Z_{r-i}\big(X \times_{\Spec(R)} \Spec(\kappa)\big),
	$$
	where the sum runs over the minimal primes of $\sG_M \big/ \left(L_{i-1} + f_i\sG_M\right)$ of dimension $r-i+1$ that contain $\sG_+$.
\end{definition}

\begin{remark}
	\label{rem_heights_Ji_Li}
    By prime avoidance, we can inductively assume that $x_i$ does not belong to any associated prime  $\pp \in \Ass_B(M/(x_1,\ldots,x_{i-1})M)$ such that $\pp \not\supseteq \mm B$ (i.e., $x_i \in \mm$ is filter-regular on $M/(x_1,\ldots,x_{i-1})$; see \cite[Appendix]{STUCKRAD_VOGEL_BUCHSBAUM_RINGS}, \cite[Propositions 1.5.11, 1.5.12]{BRUNS_HERZOG}).
	So, for any associated prime $\pp \in \Spec(B)$ of $M/N_i \cong M / (x_1,\ldots,x_i)M:_M \mm^\infty$, we have that $x_1,\ldots,x_i$ is a regular sequence on $M_\pp$.
	By verbatim arguments, for any associated prime $\fP \in \Spec(\sG)$ of $\sG/L_i$, we have that $f_1,\ldots,f_i$ is a regular sequence on $(\sG_M)_\fP$.
\end{remark}

The next lemma deals with the behavior of $\sG_M$ when cutting with a sequence of general elements in $\mm$.

\begin{lemma}
	\label{lem_cut_ass_Gr}
	The following statements hold: 
	\begin{enumerate}[\rm (i)]
		\item There are a natural surjective homomorphism 
		$$
		\sG_M\big/\left(f_1,\ldots,f_i\right)\sG_M \;\surjects\; \gr_\mm\big(M\big/(x_1,\ldots,x_i)M\big) \;\surjects\; \gr_\mm\big(M\big/(x_1,\ldots,x_i)M :_M \mm^\infty\big)
		$$
		which are isomorphisms for the graded parts $(v,n)$ with $n \gg 0$.
		\item We have the equalities 
		$
		L_i \;=\; \init\left(\left(x_1,\ldots,x_k\right)M\right) :_{\sG_M} \sG_+^{\infty} \;=\; \init\left(N_i\right) :_{\sG_M} \sG_+^{\infty}
		$ 
		of submodules of $\sG_M$. 
	\end{enumerate}
\end{lemma}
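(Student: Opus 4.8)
The plan is to prove (i) first and then deduce (ii) from it by purely formal manipulations with $\sG_+$-saturations.

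For (i) I would induct on $i$, the essential ingredient being the one-variable case. Because $\kappa$ is infinite and $\xx$ consists of general elements of $\mm$, I may assume that $x_j$ is superficial for $\mm$ with respect to $M/(x_1,\dots,x_{j-1})M$ for each $j$ (a general element of $\mm$ maps to a general, hence superficial, element of $\mm/\mm^2$). For a superficial element $x\in\mm$ on a module $M$, multiplication by $\init(x)$ carries $[\sG_M]_{(v,n)}$ into the image of $x\mm^nM_v$ inside $\mm^{n+1}M_v/\mm^{n+2}M_v$, which lies in the initial submodule $\init(xM)$; since $\gr_\mm(M/xM)=\sG_M/\init(xM)$, this produces a natural surjection $\sG_M/\init(x)\sG_M\surjects\gr_\mm(M/xM)$, and the standard Artin--Rees property of superficial elements ($xM\cap\mm^{n+1}M=x\mm^nM$ for $n\gg0$) shows that $\init(xM)$ and $\init(x)\sG_M$ agree in $\mm$-degrees $\gg0$, so the surjection is an isomorphism on the parts $(v,n)$ with $n\gg0$. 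Iterating over $i$ --- checking along the way that in large $\mm$-degree the image of $f_i$ in $\gr_\mm(M/(x_1,\dots,x_{i-1})M)$ is the initial form of $x_i$ --- gives the first surjection in (i). For the second one, set $M':=M/(x_1,\dots,x_i)M$; then $N_i/(x_1,\dots,x_i)M\cong\HL^0(M')$ has finite length, so it is killed by some $\mm^t$, and by Artin--Rees $\mm^nM'\cap\HL^0(M')=0$ for $n\gg0$, whence $\mm^nM'\to\mm^n\big(M'/\HL^0(M')\big)$ is an isomorphism for $n\gg0$; this makes $\gr_\mm(M')\surjects\gr_\mm\big(M/(x_1,\dots,x_i)M:_M\mm^\infty\big)$ an isomorphism on the parts $(v,n)$ with $n\gg0$.

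For (ii) write $\aaa_i:=(x_1,\dots,x_i)$ and $F_i:=(f_1,\dots,f_i)\sG_M$. From (i) I would first extract that the kernels $\init(\aaa_iM)/F_i$ and $\init(N_i)/\init(\aaa_iM)$ are finitely generated graded $\sG$-modules that vanish in the parts $(v,n)$ with $n\gg0$, hence are concentrated in bounded $\mm$-degree and therefore annihilated by a power of $\sG_+$; since $\aaa_iM\subseteq N_i$ and $F_i\subseteq\init(\aaa_iM)$ this gives
$$
\init(N_i):_{\sG_M}\sG_+^\infty \;=\; \init(\aaa_iM):_{\sG_M}\sG_+^\infty \;=\; F_i:_{\sG_M}\sG_+^\infty .
$$
It then remains to show $L_i=F_i:_{\sG_M}\sG_+^\infty$ for $0\le i\le r$, which I would prove by induction on $i$: for $i=0$ this reads $L_0=0:_{\sG_M}\sG_+^\infty$, true since $f_0=0$; for the step, the induction hypothesis $L_{i-1}=F_{i-1}:_{\sG_M}\sG_+^\infty$ gives $F_{i-1}\subseteq L_{i-1}$ and hence
$$
F_i \;\subseteq\; L_{i-1}+f_i\sG_M \;\subseteq\; F_i:_{\sG_M}\sG_+^\infty
$$
(the right-hand inclusion because $L_{i-1}=F_{i-1}:_{\sG_M}\sG_+^\infty\subseteq F_i:_{\sG_M}\sG_+^\infty$ and $f_i\sG_M\subseteq F_i$), so saturating with respect to $\sG_+$ and using idempotence of $(-):_{\sG_M}\sG_+^\infty$ yields $L_i=(L_{i-1}+f_i\sG_M):_{\sG_M}\sG_+^\infty=F_i:_{\sG_M}\sG_+^\infty$. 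Together with the display, this is the asserted chain of equalities $L_i=\init(\aaa_iM):_{\sG_M}\sG_+^\infty=\init(N_i):_{\sG_M}\sG_+^\infty$.

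The step I expect to be the main obstacle is the one-variable comparison inside (i): controlling $\init(xM)$ versus $\init(x)\sG_M$ in large $\mm$-degree, which is exactly where superficiality is used, together with the routine but slightly delicate bookkeeping that a general element of $\mm$ stays superficial on each successive quotient $M/(x_1,\dots,x_{i-1})M$ and that its initial form is identified correctly there in large $\mm$-degree. Once (i) is available, (ii) is essentially formal, resting only on idempotence of $\sG_+$-saturation and on the fact that $\mm$-degree-bounded graded $\sG$-modules are $\sG_+$-power torsion.
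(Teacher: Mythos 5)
Your argument is correct and follows essentially the same route as the paper: reduce part (i) to the one-variable case, where everything hinges on the identity $xM\cap\mm^{n}M=x\mm^{n-1}M$ for $n\gg0$, and then deduce (ii) by formal manipulation of $\sG_+$-saturations. The only real differences are that you quote this identity as the standard superficial-element property while the paper proves it in the present graded setting (via filter-regularity of $\iniTerm(x)$ on $\sG_M$ together with Artin--Rees, which is exactly what makes the bound on $n$ uniform in the internal degree $v$ of the $B$-module $M$), and that your part (ii) spells out in detail what the paper dismisses as ``a direct consequence of part (i)''; your aside that $\HL^0(M')$ has finite length is not needed (and not true in general) --- only that it is killed by a power of $\mm$, which you do use correctly.
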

\begin{proof}
	(i) By induction, it suffices to consider  $i = 1$, hence set $x = x_1$ and $f = \iniTerm(x)$.
	The explicit description of the claimed surjections is as follows
	\begin{align*}
		\sG_M/f\sG_M \;\cong\;& \bigoplus_{n=0}^\infty\frac{\mm^n M}{\mm^{n+1}M + x\mm^{n-1}M} \\
		 &\;\;\surjects\;\; \gr_\mm(M/xM) \;\cong\; \bigoplus_{n=0}^\infty\frac{\mm^n M}{\mm^{n+1}M +  \mm^{n}M\cap xM}\\
		 &\qquad\qquad\qquad\;\;\surjects\;\; \gr_\mm(M/(xM:_M \mm^\infty)) \;\cong\; \bigoplus_{n=0}^\infty\frac{\mm^n M}{\mm^{n+1}M +  \mm^{n}M\cap (xM:_M \mm^\infty)}.
	\end{align*}
	Since $(xM:_M \mm^\infty)$ is obtained by removing primary components of $xM \subset M$ that contain $\mm^k M$ for some $k > 0$ (see, e.g., \cite[\S 3.6]{EISEN_COMM}), it follows that $\mm^{n}M\cap (xM:_M \mm^\infty) = \mm^{n}M\cap xM$ for $n \gg 0$.	
	Hence to conclude the proof it suffices to show the equality $x\mm^{n-1}M = \mm^{n}M\cap xM$ for $n \gg 0$.
	
	For the rest of the proof, let $n \gg 0$.
	We may assume $f$ is filter-regular on $\sG$ with respect to $\sG_+$, and consequently we obtain that $\left[\left(0 :_{\sG_M} f\right)\right]_{(*,n)} = 0$ (see, e.g., \cite[Lemmas 3.6, 3.7]{MIXED_MULT}).
	This translates into 
	$$
		\mm^nM \,\cap\, (\mm^{n+2}M :_M x) \; = \; \mm^{n+1}M,
	$$
	and then we get
	$
	\mm^{n+2}M  \,\cap\, x\mm^nM \; = \; x\mm^{n+1}M.
	$
	Applying this latter equality yields 
	\begin{equation}
		\label{eq_intersect_max_n1_n2}
		\mm^{n_1}M  \,\cap\, x\mm^{n_2}M \; = \; x\mm^{\max\{n_1-1,n_2\}}M
	\end{equation}
	for $n_1 \gg 0$ and $n_2 \gg 0$.
	The Artin-Rees lemma gives a positive integers $n_0 >0$ such that 
	\begin{equation}
		\label{eq_Artin_Rees_gr}
		\mm^nM \,\cap\, xM \;\subseteq\;   x\mm^{n-n_0}M.
	\end{equation}
	By combining \autoref{eq_intersect_max_n1_n2} and \autoref{eq_Artin_Rees_gr}, we obtain the claimed equality 
	$$
	\mm^nM \,\cap\, xM \;=\;  \mm^nM \,\cap\, x\mm^{n-n_0}M \; = \; x\mm^{n-1}M.
	$$
	Therefore the proof of part (i) is complete.
	
	(ii) It is a direct consequence of part (i).
\end{proof}

We are now ready for the main result of this section.
The proof closely follows the arguments of Achilles and Manaresi \cite[Theorem 3.3]{ACHILLES_MANARESI_MULT}.

\begin{theorem}[Deformation to the normal cone]
	\label{thm_def_normal_cone}
	Assume \autoref{setup_regular_elements}. We have 
	$
	\nu_i(\xx; M) = \nu_i(\ff; \sG_M)
	$
	for all $0 \le i \le r$.
\end{theorem}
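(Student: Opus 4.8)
The plan is to mimic the deformation-to-the-normal-cone argument of Achilles and Manaresi \cite[Theorem 3.3]{ACHILLES_MANARESI_MULT}, adapting it to the present module-theoretic setting and keeping careful track of the homogeneous grading coming from $B$. First I would set up the extended Rees module $\Rees^+(\mm; M) = \bigoplus_{n \in \ZZ} \mm^n M T^n \subset M[T, T^{-1}]$, which deforms $M$ (the fiber over $T^{-1} \mapsto$ a unit) to $\sG_M = \gr_\mm(M)$ (the fiber over $T^{-1} \mapsto 0$), since $\Rees^+(\mm; M)/T^{-1}\Rees^+(\mm; M) \cong \sG_M$. The general elements $x_1, \dots, x_r \in \mm$ lift to elements $\widetilde{x}_i := x_i T \in \Rees^+(\mm; B)$, and their images in the generic fiber recover the $x_i$ while their images in the special fiber recover $f_i = \iniTerm(x_i) \in [\sG]_{(0,1)}$. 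The key point is that, because the $x_i$ are general (hence filter-regular in the appropriate sense, by \autoref{rem_heights_Ji_Li}), the intersection algorithm producing the $N_i$ on $M$ is flat over the parameter $T^{-1}$ in a suitable localized sense.

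Next I would argue by induction on $i$ that the $\ZZ$-graded family of cycles obtained from the intersection algorithm on $\Rees^+(\mm; M)$ specializes correctly at both ends. Concretely, at step $i$ one forms $\mathcal{N}_i := (\mathcal{N}_{i-1} + \widetilde{x}_i \Rees^+(\mm; M)) :_{\Rees^+(\mm; M)} \mathscr{J}^\infty$, where $\mathscr{J}$ is the ideal generated by $\mm$ and $\sG_+$-type elements (i.e. we saturate against the irrelevant data so that the resulting cycles live on $X \times_R \kappa$). The filter-regularity of $x_1, \dots, x_i$ on the relevant localizations (\autoref{rem_heights_Ji_Li}) guarantees that for each associated prime $\pp$ appearing with positive coefficient, $x_1, \dots, x_i$ is a regular sequence on $M_\pp$; the analogous statement holds on $\sG$ for $f_1, \dots, f_i$. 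This regularity makes the colength computations $\length(M_\pp / (N_{i-1} + x_i M)_\pp)$ and $\length((\sG_M)_\fP / (L_{i-1} + f_i \sG_M)_\fP)$ behave like Koszul-homology Euler characteristics, which are preserved under the flat deformation. Here \autoref{lem_cut_ass_Gr} is essential: it identifies $L_i$ with $\init(N_i) :_{\sG_M} \sG_+^\infty$, so that the special fiber of the deformed intersection algorithm is exactly the Stückrad--Vogel cycle $\nu_i(\ff; \sG_M)$, while the generic fiber is visibly $\nu_i(\xx; M)$.

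Finally I would invoke the principle of conservation of number (rational equivalence is preserved along a flat family over a DVR-like base, cf. the associativity-of-multiplicity arguments in \cite{ACHILLES_MANARESI_MULT} or \cite{FULTON_INTERSECTION_THEORY}) to conclude that the two specializations define the same cycle class, and then upgrade from cycle classes to cycles using that both $\nu_i(\xx; M)$ and $\nu_i(\ff; \sG_M)$ are effective cycles supported on $X \times_R \kappa$ whose components are cut out by explicit colengths that match term by term. In practice the cleanest route is probably to avoid heavy intersection theory and instead prove the equality of cycles directly: show that $\Proj(B/\pp)$ appears in $\nu_i(\xx; M)$ with coefficient $c$ if and only if the prime $\fP \subset \sG$ with $\sG/\fP \cong B/\pp$ (using the contraction discussed after the definition of $\nu_i(\ff; \sG_M)$) appears in $\nu_i(\ff; \sG_M)$ with the same coefficient $c$, by comparing the localized colengths through the isomorphisms of \autoref{lem_cut_ass_Gr}(i) in degrees $(v, n)$ with $n \gg 0$ (which suffices since the relevant primes all contain $\mm B$, resp. $\sG_+$, so only high $n$ matters).

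\medskip

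The main obstacle I expect is the bookkeeping in the inductive step: one must verify that passing from $N_{i-1}$ to $N_i$ (a saturation operation) commutes appropriately with the passage to initial submodules, i.e. that $\init(N_{i-1} + x_i M) :_{\sG_M} \sG_+^\infty = L_{i-1} + f_i \sG_M :_{\sG_M} \sG_+^\infty$ up to components not contributing to the cycle. This is where the generality of the $x_i$ (filter-regularity) and \autoref{lem_cut_ass_Gr} must be combined carefully, paralleling the subtle part of \cite[Theorem 3.3]{ACHILLES_MANARESI_MULT}; the homogeneous grading from $B$ is an extra variable to carry through but does not change the logic.
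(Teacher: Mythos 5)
Your second, ``direct'' route is essentially the strategy the paper takes: match each prime $\fP \supseteq \sG_+$ of $\sG$ with its contraction $\pp = \fP \cap B \supseteq \mm B$, check that $\fP$ supports $\sG_M/(L_{i-1}+f_i\sG_M)$ exactly when $\pp$ supports $M/(N_{i-1}+x_iM)$, and then compare coefficients prime by prime. The support comparison, the role of \autoref{lem_cut_ass_Gr}, and the use of filter-regularity are all correctly identified in your plan. Your first route (running the intersection algorithm on $\Rees^+(\mm;M)$ and invoking conservation of number over the parameter $T^{-1}$) is not what the paper does and, as you note yourself, would require justifying that the saturations in the algorithm commute with specialization; the paper avoids this entirely.

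The genuine gap is in the coefficient comparison, which is the heart of the theorem and which your plan does not actually carry out. You propose to compare ``the localized colengths through the isomorphisms of \autoref{lem_cut_ass_Gr}(i) in degrees $(v,n)$ with $n \gg 0$,'' but those isomorphisms identify high-degree graded pieces of $\sG_M/(f_1,\ldots,f_i)\sG_M$ with those of $\gr_\mm(M/(x_1,\ldots,x_i)M)$, whereas the cycle coefficients are lengths of localizations at $\fP$ and at $\pp$ of modules that are zero-dimensional there; equality of graded pieces does not by itself yield equality of these two local lengths, which a priori live over different local rings ($\sG_\fP$ versus $B_\pp$). The missing idea is to convert both lengths into the \emph{same} Hilbert--Samuel multiplicity. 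Writing $\overline{\sG_M}=\sG_M/L_{i-1}$ and $\overline{M}=M/N_{i-1}$, the modules $(\overline{\sG_M})_\fP$ and $\overline{M}_\pp$ are one-dimensional and $f_i$, $x_i$ are nonzerodivisors on them (this is where the generality/filter-regularity enters), so each length equals a parameter multiplicity, $e\bigl(f_i,(\overline{\sG_M})_\fP\bigr)$ and $e\bigl(x_i,\overline{M}_\pp\bigr)$; then, because $f_i\sG_\pp$ is a reduction of $[\sG_\pp]_+$ on $(\overline{\sG_M})_\pp$ and correspondingly $x_iB_\pp$ is a reduction of $\mm B_\pp$ on $\overline{M}_\pp$, both multiplicities equal $e\bigl(\mm B_\pp,\overline{M}_\pp\bigr)$, using \autoref{lem_cut_ass_Gr} to identify $(\overline{\sG_M})_\pp$ with $\gr_\mm(\overline{M}_\pp)$ in high degrees and the definition of the multiplicity of an associated graded module. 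Without this length-to-parameter-multiplicity-to-reduction chain your argument does not close.
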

\begin{proof}
	Let $\fP \in \Spec(\sG)$ be a prime ideal containing $\sG_+$ and $\pp = \fP \cap \sB \in \Spec(\sB)$ be its contraction.
	Recall that $\sG / \fP \cong \sB/\pp$ and $\pp \supseteq \mm B$. 
	From \cite[Proposition 1.5.15]{BRUNS_HERZOG} and \autoref{lem_cut_ass_Gr}, we obtain the equivalences 
	$$
	\left(\sG_M/L_{i-1}\right)_\fP = 0 \;\;\;\Leftrightarrow\;\;\;  \sG_M/L_{i-1} \otimes_\sB \sB_\pp = 0 \;\;\;\Leftrightarrow\;\;\; (\sG_M)_\pp / \left(\init(N_{i-1})_\pp :_{(\sG_M)_\pp} (\sG_\pp)_+^\infty\right) = 0.
	$$
	It is then clear that $\pp \not\in \Supp_B(M/N_{i-1})$ implies $\fP \not\in \Supp_{\sG}(\sG_M/L_{i-1})$.
	On the other hand, if we have $\fP \not\in \Supp_{\sG}(\sG_M/L_{i-1})$, then $\big[\gr_{\mm}(M_\pp/(N_{i-1})_\pp)\big]_{n} = 0$ for $n \gg 0$, however this can only happen if $(N_{i-1})_\pp \supset \mm^n M_\pp$ for $n \gg 0$.
	Since $\HL^0(M/N_{i-1}) = 0$, the condition $(N_{i-1})_\pp \supset \mm^n M_\pp$ yields the vanishing $(M/N_{i-1})_\pp = 0$.
	Therefore, it follows that $\fP  \in \Supp_{\sG}(\sG_M / (L_{i-1} + f_i\sG))$ if and only if $\pp \in \Supp_B(M/(N_{i-1} + x_iM))$.
	To conclude the proof, it suffices to show that the respective lengths appearing as coefficients in the cycles $\nu_i(\xx; M)$ and $\nu_i(\ff; \sG_M)$ are equal. 
	
	Now, suppose that $\fP$ appears in $\nu_i(\ff; \sG_M)$ and that, equivalently, $\pp$ appears in $\nu_i(\xx; M)$ (in particular,  $\dim(\sG/\fP) = \dim(\sB/\pp) = r-i+1$).
	Let $\overline{\sG_M} = \sG_M/L_{i-1}$ and $\overline{M} = M/N_{i-1}$.
	Then the modules $(\overline{\sG_M})_\fP$ and $\overline{M}_\pp$ are one-dimensional (see \autoref{rem_heights_Ji_Li}), and since by prime avoidance $f_i$ and $x_i$ are nonzerodivisor on $(\overline{\sG_M})_\fP$ and on $\overline{M}_\pp$, respectively, we obtain the equalities
	$$
	\length_{\sG_\fP}\left(
	\frac{(\sG_M)_\fP}{\left(L_{i-1} + f_i\sG_M\right)_\fP} \right) = e\left(f_i, (\overline{\sG}_M)_\fP\right) \quad \text{and} \quad \length_{R_\pp}\left(\frac{M_\pp}{\left(N_{i-1} + x_iM\right)_\pp }\right) = e\left(x_i, \overline{M}_\pp\right)
	$$
	expressing the lengths as multiplicities (see, e.g., \cite[Corollary 1.2.14]{FLENNER_O_CARROLL_VOGEL}).
	Again, by \cite[Proposition 1.5.15]{BRUNS_HERZOG},  $e(f_i, (\overline{\sG_M})_\fP) = e(f_i, (\overline{\sG_M})_\pp)$.
	We have that $\big[(\overline{\sG}_M)_\pp\big]_{n+1} = f_i \big[(\overline{\sG_M})_\pp\big]_{n}$ for $n \gg 0$.
	In other words, $f_i\sG_\pp$ is a reduction of $[\sG_\pp]_+$ on $(\overline{\sG_M})_\pp$, and then it follows that $x_iB_\pp$ is a reduction of $\mm B_\pp$ on $\overline{M}_\pp$.
	Consequently, we get the equalities 
	$$
	e\left(f_i, (\overline{\sG}_M)_\pp\right) = e\left([\sG_\pp]_+, (\overline{\sG_M})_\pp\right) \quad \text{ and } \quad e(x_i, \overline{M}_\pp) = e\left(\mm B_\pp, \overline{M}_\pp\right);
	$$
	see, e.g., \cite[Theorem 14.3]{MATSUMURA}.
	Due to \autoref{lem_cut_ass_Gr}, we get $e\left([\sG_\pp]_+, (\overline{\sG_M})_\pp\right) = e\left([\sG_\pp]_+, \gr_\mm(\overline{M}_\pp)\right)$.
	Finally, by definition, we have  $e\left([\sG_\pp)]_+, \gr_\mm(\overline{M}_\pp)\right) = e\left(\mm B_\pp, \overline{M}_\pp\right)$.
	This establishes the required equality 
	$$
	\length_{\sG_\fP}\left(
	\frac{(\sG_M)_\fP}{\left(L_{i-1} + f_i\sG_M\right)_\fP} \right)  \;= \; \length_{B_\pp}\left(\frac{M_\pp}{\left(N_{i-1} + x_iM\right)_\pp }\right)
	$$
	and finishes the proof.
\end{proof}

Next we show that the polar multiplicities of $M$ can be read from the cycles  $\nu_i(\ff; \sG_M)$.

\begin{lemma}
	\label{lem_polar_degree_GG}
	For all $0 \le i \le r$, we have $m_r^i(M) = \deg(\nu_i(\ff; \sG_M))$.
\end{lemma}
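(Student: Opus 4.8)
The plan is to push the statement through \autoref{thm_def_normal_cone} and then prove it on the ``$M$-side'' by iterating the hypersurface section formula \autoref{prop_gen_polar}(iv), reading everything off the generic-cut modules $\overline{M}_k:=M/N_k$. By \autoref{thm_def_normal_cone} we have $\deg(\nu_i(\ff;\sG_M))=\deg(\nu_i(\xx;M))$, so it suffices to prove $m_r^i(M)=\deg(\nu_i(\xx;M))$ for $0\le i\le r$. Since $N_0=(0:_M\mm^\infty)=\HL^0(M)$ and, inductively, $N_k=(x_1,\dots,x_k)M:_M\mm^\infty$, no associated prime of $\overline{M}_k$ contains $\mm B$ (cf. \autoref{rem_heights_Ji_Li}); hence a general $x_{k+1}\in\mm$ is a nonzerodivisor on $\overline{M}_k$, so $(0:_{\overline{M}_k}x_{k+1})=0$, there are short exact sequences $0\to\HL^0(\overline{M}_k/x_{k+1}\overline{M}_k)\to\overline{M}_k/x_{k+1}\overline{M}_k\to\overline{M}_{k+1}\to0$, and (as $\Supp((\overline{M}_k/\mm\overline{M}_k)^{\widesim{}})$ has smaller dimension than $\Supp(\widetilde{\overline{M}_k})$) \autoref{prop_gen_polar}(iv) applies to $\overline{M}_k$. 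When $\dim(\Supp(\widetilde{\overline{M}_0}))<r$, the submodule $\HL^0(M)$ absorbs all top-dimensional components and both sides vanish for $i\ge1$ (and the $i=0$ case below is unaffected), so we may assume $\dim(\Supp(\widetilde{\overline{M}_k}))=r-k$ throughout.

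For $i=0$, $\nu_0(\xx;M)=\sum_\pp\length_{B_\pp}(M_\pp)\,[\Proj(B/\pp)]$ over the minimal primes $\pp$ of $M$ with $\mm B\subseteq\pp$ and $\dim(B/\pp)=r+1$; these are exactly the top-dimensional minimal primes of $\HL^0(M)$, so the associativity formula gives $\deg(\nu_0(\xx;M))=e_{r+1}(\HL^0(M))=j_{r+1}(M)=m_r^0(M)$ by \autoref{prop_gen_polar}(i). For $i\ge1$ the same computation applied to $\overline{M}_{i-1}/x_i\overline{M}_{i-1}$ (whose minimal primes of dimension $r-i+1$ containing $\mm B$ are exactly the top-dimensional minimal primes of its $\HL^0$) yields $\deg(\nu_i(\xx;M))=e_{r-i+1}\big(\HL^0(\overline{M}_{i-1}/x_i\overline{M}_{i-1})\big)=j_{r-i+1}(\overline{M}_{i-1}/x_i\overline{M}_{i-1})=m_{r-i}^0(\overline{M}_{i-1}/x_i\overline{M}_{i-1})$, again by \autoref{prop_gen_polar}(i).

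Now fix $i\ge1$. For $1\le k\le i$, \autoref{prop_gen_polar}(iv) for $\overline{M}_{k-1}$ cut by $x_k$ reads $m_{r-k}^j(\overline{M}_{k-1}/x_k\overline{M}_{k-1})=m_{r-k+1}^{j+1}(\overline{M}_{k-1})$ (the $(0:x_k)$-term being zero), and combining with \autoref{thm_additive_polar}(iii) for the short exact sequence above gives, for $j\le r-k$,
$$m_{r-k+1}^{j+1}(\overline{M}_{k-1})=m_{r-k}^{j}\big(\HL^0(\overline{M}_{k-1}/x_k\overline{M}_{k-1})\big)+m_{r-k}^{j}(\overline{M}_k).$$
Starting from $m_r^i(M)=m_r^i(\HL^0(M))+m_r^i(\overline{M}_0)$ (\autoref{thm_additive_polar}(iii) once more) and applying the displayed identity with $(k,j)=(1,i-1),(2,i-2),\dots,(i,0)$ in turn,
$$m_r^i(M)=m_r^i(\HL^0(M))+\sum_{k=1}^{i}m_{r-k}^{i-k}\big(\HL^0(\overline{M}_{k-1}/x_k\overline{M}_{k-1})\big)+m_{r-i}^0(\overline{M}_i),$$
whereas \autoref{thm_additive_polar}(iii) for the last short exact sequence gives $\deg(\nu_i(\xx;M))=m_{r-i}^0(\overline{M}_{i-1}/x_i\overline{M}_{i-1})=m_{r-i}^0\big(\HL^0(\overline{M}_{i-1}/x_i\overline{M}_{i-1})\big)+m_{r-i}^0(\overline{M}_i)$. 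Subtracting,
$$m_r^i(M)-\deg(\nu_i(\xx;M))=m_r^i(\HL^0(M))+\sum_{k=1}^{i-1}m_{r-k}^{i-k}\big(\HL^0(\overline{M}_{k-1}/x_k\overline{M}_{k-1})\big).$$

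It remains to see that the right-hand side vanishes. Every term there has the form $m_s^j(H)$ with $H$ a finitely generated $\mm$-torsion graded $B$-module and $j\ge1$ (namely $j=i$ for the first term, $j=i-k\ge1$ for the others). Such quantities are zero: if $\mm^N H=0$ then $\gr_\mm(H)$ is concentrated in $\mm$-adic degrees $<N$, so $P_{\sG_H^\star}(v,n)$ is eventually independent of $n$ (it equals the ordinary Hilbert polynomial of $H$), whence $m_s^j(H)=e(s-j,j;\sG_H^\star)=0$ for every $j\ge1$ (this is also covered by \autoref{lem_vanish}). Therefore $m_r^i(M)=\deg(\nu_i(\xx;M))=\deg(\nu_i(\ff;\sG_M))$, completing the proof. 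The main obstacle I anticipate is not a single hard step but the bookkeeping: checking at each stage that the genericity and dimension hypotheses of \autoref{prop_gen_polar}(iv) hold for $\overline{M}_k$ (so the torsion terms $(0:x_k)$ disappear and dimensions drop by exactly one) and then keeping the cascade of $\HL^0$-corrections straight; the conceptual content is precisely the vanishing of $m_s^j$ with $j\ge1$ on $\mm$-torsion modules.
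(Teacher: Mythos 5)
Your proof is correct, but it takes a genuinely different route from the paper's. The paper proves the identity entirely on the $\sG$-side, by induction on $i$: the case $i=0$ comes from the associativity formula applied to $\sG_M/\sG_+^{n+1}\sG_M$, and the step $i\mapsto i+1$ uses \autoref{lem_basic_polar} together with the Hilbert-series identity for the quotient of $\sG_M/(0:_{\sG_M}\sG_+^\infty)$ by the nonzerodivisor $f_1$ and \cite[Theorem A]{MIXED_MULT}; it never invokes \autoref{thm_def_normal_cone} or the hypersurface-section machinery. You instead use \autoref{thm_def_normal_cone} to transfer the statement to $\deg(\nu_i(\xx;M))$ and then establish $m_r^i(M)=\deg(\nu_i(\xx;M))$ by cascading \autoref{prop_gen_polar}(iv) and \autoref{thm_additive_polar}(iii) through the modules $\overline{M}_k=M/N_k$, with the key point being that $m_s^j$ vanishes for $j\ge 1$ on $\mm$-torsion modules --- this is, in essence, the paper's ``second proof'' of the Length Formula theorem, promoted to a proof of the lemma. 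The logic is not circular (\autoref{thm_def_normal_cone} and the ingredients of your cascade are all proved independently of this lemma), and your bookkeeping of the genericity and dimension hypotheses, the vanishing of the $(0:_{\overline{M}_k}x_{k+1})$ terms, and the cancellation of the $k=i$ correction term all check out. What the paper's arrangement buys is that \autoref{lem_polar_degree_GG} stays independent of \autoref{thm_def_normal_cone}, so that combining the two yields a genuinely distinct ``first proof'' of the Length Formula; under your arrangement that first proof collapses into the second one, and the lemma inherits a dependence on the general-element/infinite-residue-field apparatus that the paper's intrinsic bigraded argument avoids. What your route buys is that the only $\sG$-side input needed is the cycle comparison of \autoref{thm_def_normal_cone}, with all multiplicity computations done on the more concrete $M$-side.
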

\begin{proof}
	We proceed inductively on $i$.
	Set $i = 0$.
	We may fix $n \gg 0$ big enough so that we have the equalities 
	$$
	m_r^0(M) =  \lim_{v \rightarrow \infty} \frac{P_{\sG_M^\star}(v, n)}{v^{r}/r!} =  \lim_{v \rightarrow \infty} \frac{\length_R\left(\big[\sG_M/\sG_+^{n+1}\sG_M\big]_{(v,*)}\right)}{v^{r}/r!}  = e_{r+1}\big( \sG_M/\sG_+^{n+1}\sG_M\big),
	$$
	here we see $\sG/\sG_+^{n+1} = \bigoplus_{v \ge 0} [\sG/\sG_+^{n+1}]_{(v,*)}$ as a standard graded algebra over the Artinian local ring $[\sG/\sG_+^{n+1}]_{(0,*)} = \gr_{\mm}(R) / [\gr_{\mm}(R)]_+^{n+1}$.
	From the associativity formula for multiplicities, we obtain 
	$$
	e_{r+1}\big(\sG_M/\sG_+^{n+1}\sG_M\big) \;=\; \sum_\fP \length_{\sG_\fP}\big((\sG_M)_\fP\big)\, e_{r+1}(\sG/\fP)
	$$
	where the sum runs over the minimal primes of $\sG_M$ of dimension $r+1$ that contain $\sG_+$ (we may assume $(\sG_+^{n+1}\sG_M)_\fP= 0$ since we are choosing $n\gg$ big enough).
	So, we have $m_r^0(M) = \deg(\nu_0(\ff; \sG_M))$.
	
	For $i\ge 1$, \autoref{lem_basic_polar} yields $m_r^i(M) = e\left(r-i,i-1; \sG_M/(0:_{\sG_M} \sG_+^\infty)\right)$.
	By prime avoidance, $f_1$ is a nonzerodivisor on $\sG_M/(0:_{\sG_M} \sG_+^\infty)$, and so 
	$$
	\Hilb_{\sG_M/\left((0:_{\sG_M} \sG_+^\infty) + f_1\sG_M\right)}(t_1,t_2) \;=\; \frac{1}{1-t_2}\Hilb_{\sG_M/(0:_{\sG_M} \sG_+^\infty)}(t_1,t_2).
	$$
	Set $\overline{\sG_M} := \sG_M/\left((0:_{\sG_M} \sG_+^\infty) + f_1\sG_M\right)$ and $\overline{\sG_M}^\star := \overline{\sG_M}[t^\star]$ with $t^\star$ again a variable of degree $(0,1)$.
	From \cite[Theorem A]{MIXED_MULT}, we conclude that $m_r^{i}(M) = e(r-i,i-1; \sG_M/(0:_{\sG_M} \sG_+^\infty)) = e(r-i,i-1; \overline{\sG_M}^\star)$ for all $i \ge 1$.
	By applying the argument of the above paragraph to $\overline{\sG_M}$ instead of $\sG_M$, we obtain  the equality $e(r-1,0; \overline{\sG_M}^\star)  = \deg(\nu_{1}(\ff; \sG_M))$.
	Therefore by repeating this whole process we eventually get the result. 
\end{proof}

We can now provide the promised length formula for polar multiplicities.

\begin{theorem}[Length formula for polar multiplicities]
	Assume \autoref{setup_regular_elements}.
	For all $0 \le i \le r$, we have 
	$$
	m_r^i(M) \;=\; \sum_{\pp} 
\length_{\sB_\pp}\left(
\frac{M_\pp}
{\big(\left((x_1,\ldots,x_{i-1})M :_M  \mm^\infty\right) + x_iM\big)_\pp} \right)\cdot e(\sB/\pp)
	$$
	where the sum runs through the minimal primes of $M/\left((x_1,\ldots,x_{i-1})M:_M\mm^\infty+x_iM\right)$ of dimension $r-i+1$ that contain $\mm B$.
	Here we use the convention that $(x_1,\ldots,x_{i-1})M :_M \mm^\infty$ is $0$ for $i = 0$.
\end{theorem}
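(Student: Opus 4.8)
The plan is to assemble the two main results of this section. By \autoref{lem_polar_degree_GG} we have $m_r^i(M) = \deg\big(\nu_i(\ff;\sG_M)\big)$, and by \autoref{thm_def_normal_cone} the deformation to the normal cone gives the equality of cycles $\nu_i(\ff;\sG_M) = \nu_i(\xx;M)$ in $Z_{r-i}(X\times_R\kappa)$; combining the two yields $m_r^i(M) = \deg\big(\nu_i(\xx;M)\big)$. Everything then reduces to unwinding the definition of the St\"uckrad--Vogel cycle $\nu_i(\xx;M)$ and computing its degree with respect to $\OO_X(1)$.

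First I would identify the ``saturation chain'' occurring in that definition, namely $N_{i-1} = (x_1,\ldots,x_{i-1})M :_M \mm^\infty$ (with the convention that this is $0$ when $i=0$). This follows by induction on $i$ from the recursion $N_j = (N_{j-1}+x_jM):_M\mm^\infty$, together with $N_{-1}=0$ and $x_0=0$, using the elementary identity $\big((J:_M\mm^\infty)+xM\big):_M\mm^\infty = (J+xM):_M\mm^\infty$ for submodules $J\subseteq M$ (both inclusions are immediate, the nontrivial one via finite generation of $M$). In particular $N_{i-1}+x_iM$ is exactly the submodule $\big((x_1,\ldots,x_{i-1})M:_M\mm^\infty\big)+x_iM$ appearing in the statement, and for $i=0$ it collapses to $0$, matching the stated convention.

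It then remains to read off the degree. By definition,
\[
\nu_i(\xx;M) \;=\; \sum_{\pp} \length_{B_\pp}\!\Big(M_\pp\big/(N_{i-1}+x_iM)_\pp\Big)\,\big[\Proj(B/\pp)\big],
\]
the sum running over the minimal primes $\pp$ of $M/(N_{i-1}+x_iM)$ with $\dim(B/\pp)=r-i+1$ and $\mm B\subseteq\pp$. Each such $\Proj(B/\pp)$ is an integral closed subscheme of $X\times_R\kappa=\Proj(B/\mm B)$ of dimension $r-i$, whose $\OO_X(1)$-degree is the multiplicity $e(B/\pp)$ of the standard graded $\kappa$-algebra $B/\pp$; hence $\deg\big(\nu_i(\xx;M)\big)=\sum_\pp \length_{B_\pp}\!\big(M_\pp/(N_{i-1}+x_iM)_\pp\big)\,e(B/\pp)$. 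Substituting the formula for $N_{i-1}$ from the previous step gives precisely the asserted length formula.

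I do not expect a serious obstacle: the theorem is essentially a repackaging, and the substantive work lies in \autoref{lem_polar_degree_GG} and \autoref{thm_def_normal_cone} (which follow Achilles--Manaresi). The only points demanding care are the bookkeeping of the saturation chain, making sure the degree of the cycle is normalized against $\OO_X(1)$ so that it returns $e(B/\pp)$, and the $i=0$ base case, where $\nu_0(\xx;M)=\sum_\pp\length_{B_\pp}(M_\pp)\,\big[\Proj(B/\pp)\big]$ ranges over the minimal primes of $M$ of dimension $r+1$ containing $\mm B$ --- which, via the associativity formula for multiplicities, is consistent with the equality $m_r^0(M)=j_{r+1}(M)$ of \autoref{prop_gen_polar}(i).
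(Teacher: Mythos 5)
Your proposal is correct and follows essentially the same route as the paper's (first) proof: combine \autoref{thm_def_normal_cone} with \autoref{lem_polar_degree_GG} to get $m_r^i(M)=\deg(\nu_i(\xx;M))$, then read off the degree of the St\"uckrad--Vogel cycle from its definition. Your extra bookkeeping (identifying $N_{i-1}=(x_1,\ldots,x_{i-1})M:_M\mm^\infty$ via the saturation identity, and normalizing the degree against $\OO_X(1)$ so that it returns $e(B/\pp)$) is exactly the unwinding the paper leaves implicit.
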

\begin{proof}[First proof]
	By combining \autoref{thm_def_normal_cone} and \autoref{lem_polar_degree_GG}, we get $m_r^i(M) = \deg(\nu_i(\xx; M))$.
	On the other hand, the displayed formula is the degree of the cycle $\nu_i(\xx; M)$.
\end{proof}
\begin{proof}[Second proof]
 	Let $Q_i := M/\left((x_1,\ldots,x_{i-1})M:_M\mm^\infty+x_iM\right)$.
 	By \cite[Proposition 6.1.3]{FLENNER_O_CARROLL_VOGEL}, it suffices to show that $m_r^i(M) = j_{r-i+1}(Q_i)$.
 	Let $\overline{Q_i} := Q_i / \HL^0(Q_i)$ and notice that $Q_{i+1} \cong \overline{Q_{i}}/x_{i+1}\overline{Q_i}$.
 	Then \autoref{prop_gen_polar}(iv) yields the equality $m_{r-i-1}^j(Q_{i+1}) = m_{r-i}^{j+1}(\overline{Q_i})$ for all $0 \le j \le r-i-1$.
 	The short exact sequence 
 	$$
 	0 \rightarrow \HL^0(Q_i) \rightarrow Q_i \rightarrow \overline{Q_i} \rightarrow 0
 	$$
 	 and \autoref{thm_additive_polar} give the equality $m_{r-i}^j(Q_i) = m_{r-i}^j(\overline{Q_i}) + m_{r-i}^j(\HL^0(Q_i))$ for all $0 \le j \le r-i$.
 	 By \autoref{prop_gen_polar}(i), we have $m_{r-i}^0(\overline{Q_i})=0$.
 	 Since $\mm^n \cdot \HL^0(Q_i) =0$ for $n \gg 0$, \autoref{lem_basic_polar} yields the equalities $m_{r-i}^j(\HL^0(Q_i)) = e\big(r-i-j,j; \sG_{\HL^0(Q_i)}^\star\big) = e\big(r-i-j,j-1; \sG_{\HL^0(Q_i)}\big) = 0$ for all $1 \le j \le r-i$.
 	 Therefore, we can iterate this process and obtain the equalities
 	 $$
 	 m_r^i(M) \;=\; m_{r-i}^0(Q_i) \;=\; j_{r-i+1}(Q_i),
 	 $$ 
 	 where the last one follows from \autoref{prop_gen_polar}(i).
\end{proof}

\section{Polar multiplicities: integrality and birationality}
\label{sect_criteria}

In this section, we give criteria for the integrality and birationality of an inclusion of graded algebras in terms of polar multiplicities. 
Here the following setup is fixed. 

\begin{setup}
	\label{setup_crit_polar}
	Let $A \subseteq B$ be a homogeneous inclusion of standard graded algebra over a Noetherian local ring $R = A_0 = B_0$.
	Consider the associated morphism $f : U \subseteq X = \Proj(B) \rightarrow Y=\Proj(A)$ where $U = X \setminus V_+(A_+B)$.
	Set $r = \dim(X)$.
\end{setup}

A very important observation of Simis, Ulrich and Vasconcelos \cite{SUV_MULT} is that the integrality and birationality of the inclusion $A \hookrightarrow B$ can be studied via the intermediate algebra $\gr_{A_+B}(B)$.
As in \cite[\S 3]{SUV_MULT}, we consider the Rees algebra $\Rees(A_+B) = B[A_+BT]  \subset B[T]$ and the extended Rees algebra $\Rees^+(A_+B) = B[A_+BT, T^{-1}]  \subset B[T, T^{-1}]$ with standard $\NN$-gradings (we can simply set $\deg(T) = 0$ and utilize the induced grading).
Thus, we see $\Rees(A_+B)$ as a standard graded $R$-algebra and $\Rees^+(A_+B)$ as a standard graded $R[T^{-1}]$-algebra.
As a consequence, we also see the associated graded ring $G := \gr_{A_+B}(B) \cong \Rees^+(A_+B)/T^{-1}\Rees^+(A_+B)$ as a standard graded $R$-algebra.
Let $t \ge 1$ be a positive integer.
From \cite[Proposition 3.2]{SUV_MULT}, the $v$-th graded part of the $G$-module $B_tG$ is given by 
$$
\left[\sB_tG\right]_v \;\cong \; \bigoplus_{j=1}^{v-t+1} \sA_{j-1}\sB_{v-j+1} \big/ \sA_{j}\sB_{v-j}.
$$
By applying the functor $- \otimes_R R/\mm^{n+1}$ to the short exact sequence 
\begin{equation*}
	0 \longrightarrow \sA_{j-1}\sB_{v-j+1} \big/ \sA_{j}\sB_{v-j}  \longrightarrow \sB_v \big/ \sA_{j}\sB_{v-j} \longrightarrow  \sB_v \big/ \sA_{j-1}\sB_{v-j+1} \longrightarrow  0,
\end{equation*}
we get the exact sequence 
$$
\frac{\sA_{j-1}\sB_{v-j+1}}{\sA_{j}\sB_{v-j} + \mm^{n+1}\sA_{j-1}\sB_{v-j+1}} \longrightarrow \frac{\sB_v}{\sA_{j}\sB_{v-j} + \mm^{n+1} \sB_v} \longrightarrow  \frac{\sB_v}{\sA_{j-1}\sB_{v-j+1} + \mm^{n+1} \sB_v} \longrightarrow  0.
$$
Thus summing up gives us the following
\begin{equation}
	\label{eq_ineq_gr_Q}
	P_{\sG_{B_tG}^\star}(v, n) = \length_R\big([B_tG]_v/\mm^{n+1}[B_tG]_v\big) \;\ge\; \length_R\big(B_v/(A_{v-t+1}B_{t-1}+\mm^{n+1}B_v)\big) 
\end{equation}
for all $v \gg 0$ and $n \gg 0$.
As it can be helpful, we provide  vanishing criteria for polar multiplicities.

\begin{lemma}
	\label{lem_vanish}
	We have $m_r^i(B) = 0$ if $i > \dim(R)$ or $i < r-\dim(\Proj(B/\mm B))$.
\end{lemma}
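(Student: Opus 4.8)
The plan is to read $m_r^i(B)$ off the bivariate Hilbert polynomial $P_{\sG_B^\star}$ and then bound its $n$-degree and its $v$-degree separately. By \autoref{lem_basic_polar}(i) we have $m_r^i(B) = e(r-i,i;\sG_B^\star)$, and $(r-i)!\,i!\,e(r-i,i;\sG_B^\star)$ is precisely the coefficient of the monomial $v^{r-i}n^i$ in $P_{\sG_B^\star}(v,n)$ (it sits in total degree $r$, so it can only come from the leading form; and if $\deg P_{\sG_B^\star}<r$ this coefficient, together with $m_r^i(B)$, is zero by convention). Hence it suffices to show that the monomial $v^{r-i}n^i$ has zero coefficient in $P_{\sG_B^\star}$ under each of the two hypotheses. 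Throughout I will use the identity $P_{\sG_B^\star}(v,n) = \length_R(B_v/\mm^{n+1}B_v)$, valid for $v \gg 0$ and $n \gg 0$, which follows from \autoref{eq_ext_ass_gr}.

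For the hypothesis $i > \dim(R)$: fix any $v \gg 0$. Then $B_v$ is a finitely generated $R$-module, so $n \mapsto \length_R(B_v/\mm^{n+1}B_v)$ agrees for $n \gg 0$ with the Hilbert--Samuel polynomial of $B_v$ with respect to $\mm$, which has degree $\dim_R(B_v) \le \dim(R)$. Writing $P_{\sG_B^\star}(v,n) = \sum_b c_b(v)\,n^b$ with $c_b \in \QQ[v]$, this says $c_b(v) = 0$ for every $v \gg 0$ whenever $b > \dim(R)$, hence $c_b \equiv 0$; in particular the coefficient of $v^{r-i}n^i$ is $0$ when $i > \dim(R)$. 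For the hypothesis $i < r - \dim(\Proj(B/\mm B))$, set $d := \dim(\Proj(B/\mm B))$, fix $n \gg 0$, and decompose $\length_R(B_v/\mm^{n+1}B_v) = \sum_{k=0}^n \dim_\kappa\!\big(\mm^k B_v/\mm^{k+1}B_v\big)$. For each $k$ the graded object $\bigoplus_v \mm^k B_v/\mm^{k+1}B_v = \mm^k B/\mm^{k+1}B$ is a finitely generated graded module over the standard graded $\kappa$-algebra $B/\mm B$ (indeed $\mm$ annihilates it), so its Hilbert function in $v$ is eventually a polynomial of degree at most $\dim(B/\mm B) - 1 = d$. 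Summing the finitely many terms, $v \mapsto P_{\sG_B^\star}(v,n)$ has $v$-degree $\le d$ for every $n \gg 0$; expanding $P_{\sG_B^\star}(v,n) = \sum_a c'_a(n)\,v^a$ and arguing as before gives $c'_a \equiv 0$ for $a > d$, so the coefficient of $v^{r-i}n^i$ is $0$ whenever $r - i > d$, i.e. whenever $i < r - d$. In both cases the required coefficient vanishes, so $m_r^i(B) = 0$.

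The argument is essentially routine once this plan is set up: all the content is in the two degree estimates, which reduce to the standard facts that Hilbert--Samuel functions of finitely generated modules over $R$ have degree $\le \dim R$ and Hilbert functions of finitely generated graded modules over $B/\mm B$ have degree $\le \dim(\Proj(B/\mm B))$. The one point that needs a little care is that each degree bound has to be established for \emph{all} sufficiently large values of the frozen variable before one may conclude that the corresponding polynomial coefficients $c_b$ (resp. $c'_a$) vanish identically; a single specialization would not be enough. Degenerate cases are not genuine obstacles: if $\Proj(B/\mm B) = \emptyset$ one takes $d = -1$, and the same estimate shows $P_{\sG_B^\star}$ has $v$-degree $\le -1$ for $n \gg 0$, consistent with all $m_r^i(B)$ vanishing in that situation.
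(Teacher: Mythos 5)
Your proposal is correct. It opens exactly as the paper does, using \autoref{lem_basic_polar}(i) to identify $m_r^i(B)$ with the mixed multiplicity $e(r-i,i;\sG^\star)$, i.e.\ with the normalized coefficient of $v^{r-i}n^i$ in the degree-$r$ leading form of $P_{\sG^\star}$ (and since that monomial has total degree $r$, it indeed cannot receive contributions from the lower-order terms). Where the paper then simply cites \cite[Proposition 3.1]{POSITIVITY} --- the general vanishing statement $e(i,j;\sG^\star)=0$ for $i>\dim\Proj(\sG^\star/[\sG^\star]_{(0,1)})$ or $j>\dim\Proj(\sG^\star/[\sG^\star]_{(1,0)})$, which specializes to the two bounds $\dim(\Proj(B/\mm B))$ and $\dim(R)$ --- you instead prove the needed special case from scratch: freezing $v\gg 0$ reduces the $n$-degree to the Hilbert--Samuel degree of the $R$-module $B_v$, which is at most $\dim(R)$, and freezing $n\gg 0$ reduces the $v$-degree to that of Hilbert functions of finitely generated graded $B/\mm B$-modules, which is at most $\dim(\Proj(B/\mm B))$. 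Both partial-degree estimates are sound, and you correctly handle the one delicate point, namely that each bound must hold for \emph{all} sufficiently large values of the frozen variable before one may conclude that the offending polynomial coefficients $c_b$, $c'_a$ vanish identically. The trade-off is the expected one: the paper's version is a two-line appeal to the general theory of mixed multiplicities of standard bigraded algebras, while yours is longer but self-contained and elementary, requiring nothing beyond the degree bounds for ordinary Hilbert and Hilbert--Samuel polynomials.
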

\begin{proof}
	By \autoref{lem_basic_polar}, we have $m_r^i(B) = e(r-i, i; \sG^\star)$ for all $0 \le i \le r$.
	 From \cite[Propostion 3.1]{POSITIVITY}, we obtain the vanishing $e(i,j;\sG^\star) = 0$ if $i > \dim\big(\Proj(\sG^\star/ ([\sG^\star]_{(0,1)}))\big) = \dim(\Proj(B/\mm B))$ or $j > \dim\big(\Proj(\sG^\star/ ([\sG^\star]_{(1,0)}))) = \dim(\gr_\mm(R)) = \dim(R)$.
\end{proof}

We consider the following polar multiplicities that will play an important role in the criteria that we provide. 

\begin{definition}
	We set $m_r^i(A, B) := m_r^i(A_+B, B) =  m_r^i(G)$  for all $0 \le i \le r$.
\end{definition}

The main result of this section is the  theorem below. 

\begin{theorem}
	\label{thm_criteria}
	Assume \autoref{setup_crit_polar}.
	The following statements hold: 
	\begin{enumerate}[\rm (i)]
		\item $m_r^i(A, B) \ge m_r^i(A)$ for all $0 \le i \le r$.
		\item If $R$ is equidimensional and catenary, then $m_r^i(A, B) \ge m_r^i(B)$ for all $0 \le i \le r$.
		\item {\rm(Integrality)} Consider the following conditions: 
		\begin{enumerate}[\rm (a)]
			\item A finite morphism $f : X \rightarrow Y$ is obtained.
			\item $m_r^i(A, B) = m_r^i(G/B_tG)$ for all $0 \le i \le r$ and $t \gg 0$.
			\item $m_r^i(A, B) = m_r^i(B)$ for all $0 \le i \le r$.
		\end{enumerate}
		In general, the implications {\rm(a)} $\Rightarrow$ {\rm(b)} and {\rm(a)} $\Rightarrow$ {\rm(c)} hold. 
		Moreover, if $B$ is equidimensional and catenary, then the reverse implication {\rm(b)} $\Rightarrow$ {\rm(a)} also holds.
		\item {\rm(Integrality $+$ Birationality)} If we obtain a finite birational morphism $f : X \rightarrow Y$, then $m_r^i(A, B) = m_r^i(A)$ for all $0 \le i \le r$.
		The converse holds if $B$ is equidimensional and catenary.
	\end{enumerate}
\end{theorem}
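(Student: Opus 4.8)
The theorem is driven by three comparisons of the intermediate algebra $G=\gr_{A_+B}(B)$ — with $A$, with $B$, and with the modules $B_tG$ and $G/B_tG$ — after which everything is read off from the associativity formula \autoref{cor_associative_polar}, the additivity theorem \autoref{thm_additive_polar}, the reduction theorem \autoref{thm_reduction_ideals}, the hypersurface‑section theorem \autoref{thm_hypersurface_sect} and the localization lemma \autoref{lem_localiz_polar}.

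For \textup{(i)}, the fibre cone $\mathcal F(A_+B)=\bigoplus_{n\ge0}(A_+B)^n/B_+(A_+B)^n=G/B_+G$ is — because $A_+B$ is generated by the linear forms $A_1$ of the standard graded ring $B$ and $A$ is itself standard graded — canonically isomorphic to $A$ as a graded $R$-algebra, so there is a natural surjection $G\twoheadrightarrow A$. Since $\dim G=\dim B=r+1$, each minimal prime $\mathfrak p$ of $A$ with $\dim A/\mathfrak p=r+1$ pulls back to a minimal prime $\mathfrak P$ of $G$ with $\dim G/\mathfrak P=r+1$, $G/\mathfrak P=A/\mathfrak p$ and $\length_{G_\mathfrak P}(G_\mathfrak P)\ge\length_{A_\mathfrak p}(A_\mathfrak p)$; restricting the associativity formula \autoref{cor_associative_polar} for $G$ to these primes gives $m_r^i(A,B)=m_r^i(G)\ge m_r^i(A)$ (the right side being $0$ when $\dim A<r+1$). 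For \textup{(ii)}, $G$ and $B$ occur as the special and general fibres of $\Rees^+(A_+B)$ (reduction modulo $T^{-1}$, resp. modulo $T^{-1}-1$, both non-zerodivisors). The crude input is the pointwise bound $\length_R(B_v/\mm^{n+1}B_v)\le\length_R([G]_v/\mm^{n+1}[G]_v)$, valid because $[G]_v=\bigoplus_n[(A_+B)^n/(A_+B)^{n+1}]_v$ is the associated graded of the $A_+B$-adic filtration of $B_v$; realizing $G$ and $B$ as hypersurface sections of $\Rees^+(A_+B)$ and feeding this into \autoref{thm_hypersurface_sect}, \autoref{thm_additive_polar} and, at the top-dimensional minimal primes, \autoref{cor_associative_polar} and \autoref{lem_localiz_polar} upgrades it to $m_r^i(A,B)\ge m_r^i(B)$ — the equidimensional–catenary hypothesis on $R$ being exactly what makes $\Rees^+(A_+B)$ tame enough for this comparison.

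For \textup{(iii)}, first observe that condition \textup{(a)} is equivalent to $B$ being integral over $A$, and hence — using $B_1^k\subseteq A_+B$ for $k\gg0$ — to $A_+B$ being a reduction of $B_+$ on $B$. Granting this, \textup{(a)} $\Rightarrow$ \textup{(c)} is immediate from \autoref{thm_reduction_ideals} with $A_+B\subseteq B_+$ and $M=B$, since $\gr_{B_+}(B)=B$; and \textup{(a)} $\Rightarrow$ \textup{(b)} holds because, writing $B$ as a finite $A$-module generated in degrees $\le d$, for $t>d$ one gets $A_{v-t+1}B_{t-1}=B_v$ for $v\gg0$, so $B_tG$ vanishes in large degrees, hence $m_r^i(B_tG)=0$, and \autoref{thm_additive_polar}\textup{(iii)} applied to $0\to B_tG\to G\to G/B_tG\to0$ gives $m_r^i(A,B)=m_r^i(G/B_tG)$. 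For the converse \textup{(b)} $\Rightarrow$ \textup{(a)} when $B$ is equidimensional and catenary: \autoref{thm_additive_polar}\textup{(iii)} converts \textup{(b)} into $\dim\Supp(\widetilde{B_tG})<r$ for $t\gg0$; by \autoref{eq_ineq_gr_Q} this forces $\length_R\bigl(B_v/(A_{v-t+1}B_{t-1}+\mm^{n+1}B_v)\bigr)$ to grow with degree $<r$ in $(v,n)$ for $t\gg0$, and — detected at the minimal primes of $B$ of dimension $r+1$ exactly as in the proof of \autoref{lem_localiz_polar}, which is where the hypothesis on $B$ enters — this says precisely that $A_+B$ is a reduction of $B_+$ on $B$, i.e. that $f$ is finite.

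For \textup{(iv)}, if $f$ is finite and birational then \textup{(a)} holds, so by \textup{(c)} $m_r^i(A,B)=m_r^i(B)$; birationality makes $B/A$ supported in dimension $<r$ on $Y$, so \autoref{thm_additive_polar}\textup{(iii)} on $0\to A\to B\to B/A\to0$ over $A$ (with $\OO_X(1)=f^{*}\OO_Y(1)$ identifying the two readings of $m_r^i(B)$) gives $m_r^i(B)=m_r^i(A)$, whence $m_r^i(A,B)=m_r^i(A)$. Conversely, suppose $m_r^i(A,B)=m_r^i(A)$ for all $i$ and $B$ is equidimensional and catenary. Since for each top-dimensional minimal prime $\mathfrak P$ of $G$ some $m_r^i(G/\mathfrak P)$ is nonzero, the equality forces, in the proof of \textup{(i)}, that every such $\mathfrak P$ contains the kernel of $G\twoheadrightarrow A$ and localizes it to zero; as $B_tG$ lies in that kernel, $(B_tG)_\mathfrak P=0$ for all such $\mathfrak P$, i.e. $\dim\Supp(\widetilde{B_tG})<r$, which is condition \textup{(b)}, so by \textup{(iii)} $f$ is finite. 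Then \textup{(c)} yields $m_r^i(A)=m_r^i(A,B)=m_r^i(B)$, and \autoref{thm_additive_polar}\textup{(iii)} on $0\to A\to B\to B/A\to0$ forces $m_r^i(B/A)=0$ for all $i$, hence $\dim\Supp(\widetilde{B/A})<r$ on $Y$, which means $f$ is birational. The step I expect to be the genuine obstacle is \textup{(b)} $\Rightarrow$ \textup{(a)} in \textup{(iii)} — turning the degree drop of the Hilbert polynomial in \autoref{eq_ineq_gr_Q} into an integral-dependence statement — since it is essentially the hard half of the integrality criterion and is exactly where the equidimensional–catenary hypothesis is used.
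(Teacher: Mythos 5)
Your overall architecture matches the paper's: part (i) via the surjection $G\twoheadrightarrow A$ (the paper applies \autoref{thm_additive_polar} to $0\to B_1G\to G\to A\to 0$, which is the same computation); (a)$\Rightarrow$(b) and (a)$\Rightarrow$(c) via additivity and \autoref{thm_reduction_ideals}; part (ii) via the extended Rees algebra of $A_+B$ together with \autoref{thm_hypersurface_sect} and \autoref{lem_localiz_polar}; and part (iv) by reducing to (iii) and then comparing $m_r^i(A)$ with $m_r^i(B)$ over $Y$ by additivity/associativity. But there is one genuine gap, exactly at the step you flag yourself: (b)$\Rightarrow$(a) in (iii). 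From (b) and \autoref{thm_additive_polar} you correctly deduce $m_r^i(B_tG)=0$ for all $i$, hence (via \autoref{lem_dim_sG_star}) $\dim\Supp(\widetilde{B_tG})<r$. The claim that the resulting degree drop of $\length_R\bigl(B_v/(A_{v-t+1}B_{t-1}+\mm^{n+1}B_v)\bigr)$ ``says precisely that $A_+B$ is a reduction of $B_+$,'' detected ``as in the proof of \autoref{lem_localiz_polar},'' is not an argument: \autoref{lem_localiz_polar} compares polar multiplicities under localization and does not convert a Hilbert-function degree bound into integral dependence. What the paper actually invokes is a positivity theorem, \cite[Lemmas 6.5, 6.6]{RELATIVE_MIXED}: if $f$ is \emph{not} finite, there is a closed point $x\in X\setminus U$ whose associated prime $P$ contains $\mm B+A_+B$ and satisfies $\dim(\OO_x)=r$ (this is where equidimensionality and catenarity of $B$ enter), and for $t,d\gg 0$ one has $\lim_{n\to\infty}\length_R\bigl(B_{t+nd}/[P^n]_{t+nd}\bigr)/n^r>0$. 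Combined with \autoref{eq_ineq_gr_Q} and \autoref{lem_basic_polar}, this forces $\sum_i \tfrac{d^{r-i}}{i!(r-i)!}m_r^i(B_{t+1}G)>0$, hence some $m_r^i(B_{t+1}G)>0$, contradicting (b). Without this (or an equivalent) positivity input your argument does not close; it is the essential content of the hard half of the criterion.

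A secondary, lesser issue: in (ii), the ``pointwise bound'' $\length_R(B_v/\mm^{n+1}B_v)\le\length_R([G]_v/\mm^{n+1}[G]_v)$ cannot by itself produce inequalities between the individual coefficients $m_r^i$ (termwise inequalities of Hilbert functions do not pass to individual mixed leading coefficients). The paper's actual chain is $m_r^i(G)\ge m_{r+1}^{i+1}(C)\ge m_r^i(C\otimes_S S_{\mm S})=m_r^i(B)$, where $S=R[T^{-1}]_{(\mm,T^{-1})}$ and $C=\Rees^+(A_+B)\otimes_{R[T^{-1}]}S$, obtained by cutting with the hypersurface $T^{-1}$ (\autoref{thm_hypersurface_sect}) and then localizing at $\mm S$ (\autoref{lem_localiz_polar}). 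You name the right objects and lemmas, but the inequality is carried by that specific two-step chain, not by the pointwise length comparison.
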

\begin{proof}
	(i) From the short exact sequence $0  \rightarrow B_1G \rightarrow G \rightarrow A \rightarrow 0$ and \autoref{thm_additive_polar}, we obtain the inequality $m_r^i(A, B) = m_r^i(G) \ge m_r^i(A)$.
	
	(ii)
	Consider the local ring $S := R[T^{-1}]_{(\mm, T^{-1})}$ and the standard graded $S$-algebra $C := \Rees^+(A_+B) \otimes_{R[T^{-1}]} S$.
	Then \autoref{thm_hypersurface_sect} and \autoref{lem_localiz_polar} yield the inequalities
	$$
	m_r^i(A, B) \;=\; m_r^i(G) \;\ge\; m_{r+1}^{i+1}(C) \;\ge\; m_{r}^{i}(C \otimes_S S_{\mm S}) \;=\; m_r^i(B),
	$$
	where the last equality follows from the isomorphism $C \otimes_S S_{\mm S} \cong B[T] \otimes_{R[T]} R[T]_{\mm R[T]}$. 
	So the result of this part follows.
	
	(iii) Suppose that $A \hookrightarrow B$ is an integral extension (i.e., $\sqrt{A_+B} \supseteq B_+$).
	Then, for $t \gg 0$, we have $B_t G = 0$ and so the short exact sequence $0 \rightarrow B_tG \rightarrow G \rightarrow G/B_tG \rightarrow 0$ and \autoref{thm_additive_polar} give the equality $m_r^i(A, B) = m_r^i(G) = m_r^i(G/B_tG)$.
	Since $A_+B$ is a reduction of $B_+$, \autoref{thm_reduction_ideals} implies that 
	$$
	m_r^i(A, B) \;=\; m_r^i(G) \;=\; m_r^i(\gr_{B_+}(B)) \;=\; m_r^i(B),
	$$
	where the last equality holds since $\gr_{B_+}(B) \cong B$.
	So, we have the implications (a) $\Rightarrow$ (b) and (a) $\Rightarrow$ (c).

	Suppose that $A \hookrightarrow B$ is not an integral extension (i.e., $\sqrt{A_+B} \not\supseteq B_+$) and that $B$ is equidimensional and catenary.
	Let $x \in X$ be a closed point with associated prime $P \subset B$ such that $P \supseteq A_+B$.
	By \cite[Lemmas 6.5, 6.6]{RELATIVE_MIXED}, we obtain $P \supseteq \mm B$, $d_x = \dim(\OO_x) = r$ and the positivity of the following limit 
	$$
	\lim_{n \rightarrow \infty} \frac{\length_{R}\left(B_{t+nd}/\left[P^n\right]_{t+nd}\right)}{n^r} \;>\; 0
	$$
	for large enough integers $t \gg 0$ and $d \gg 0$.
	Hence, from the inclusion $P \supseteq \mm B + A_+B$ and equation \autoref{eq_ineq_gr_Q}, we get the positivity of the following limits
	$$
	\lim_{n \rightarrow \infty} \frac{P_{\sG_{B_{t+1}G}^\star}(t+nd, n-1)}{n^r} \;\ge\;
	\lim_{n \rightarrow \infty} \frac{\length_{R}\left(B_{t+nd}/\left(A_{nd}B_t + \mm^nB_{t+nd}\right)\right)}{n^r} \;>\; 0.
	$$
	Consequently,  \autoref{lem_basic_polar} yields 
	$$
	\sum_{i=0}^r \,\frac{d^{r-i}}{i!(r-i)!}\, m_r^i(B_{t+1}G) \;>\; 0,
	$$
	and so we should have $m_r^i(B_{t+1}G) > 0$ for some $0 \le i \le r$.
	Finally, since we have the equation $m_r^i(G) = m_r^i(G/B_{t+1}G) + m_r^i(B_{t+1}G)$ by \autoref{thm_additive_polar}, we obtain the implication (b) $\Rightarrow$ (a) (under our current assumption that $B$ is equidimensional and catenary).
	So the proof of part (iii) is complete.
	
	(iv) Since $m_r^i(A) = m_r^i(G/B_1G) \le m_r^i(G/B_tG)$ for all $t \ge 1$ and we already settled part (iii), in this part we may assume that $A \hookrightarrow B$ is an integral extension.
	Moreover, by part (iii), we may assume $m_r^i(A, B) = m_r^i(B)$.
	Then \autoref{cor_associative_polar} gives the equalities 
	$$
	m_r^i(A) = \sum_\pp \length_{A_\pp}(A_\pp) m_r^i(A/\pp) \quad \text{ and } \quad m_r^i(B) = \sum_\pp \length_{A_\pp}(B_\pp) m_r^i(A/\pp)
	$$
	where both sums run through the  minimal primes of $A$ of dimension $r+1$.
	By \autoref{lem_dim_sG_star}, for any relevant minimal prime $\pp$ of $A$ of dimension $r+1$, there is some $0 \le i \le r$ such that $m_r^i(A/\pp) > 0$.
	Therefore, under our current assumptions, we obtain that $f : X \rightarrow Y$ is birational if and only if $m_r^i(A) = m_r^i(B)$ for all $0 \le i \le r$.
	This completes the proof of the theorem.
\end{proof}

\section{Mixed Buchsbaum-Rim multiplicities and reductions of modules}
\label{sect_modules}

In this short section, by applying the result of \autoref{thm_criteria}, we obtain a criterion for integral dependence of modules in terms of certain mixed Buchsbaum-Rim multiplicities.

Let $R$ be a Noetherian local ring of dimension $d$. 
Let $E$ be a finitely generated $R$-module having rank $e \ge 1$.
Following \cite{SUV_REES_MOD}, we say that the \emph{Rees algebra} $\Rees(E)$ is given as the symmetric algebra $\Sym(E)$ modulo its $R$-torsion.
Also, we have the equality $\dim(\Rees(E)) = d+e$ (see \cite[Proposition 2.2]{SUV_REES_MOD}).
Set $r := d+e-1$.
We consider the following \emph{mixed Buchsbaum-Rim multiplicities} (see \cite{KLEIMAN_THORUP_GEOM, KLEIMAN_THORUP_MIXED}):
$$
\br_i(E) \;:=\; m_r^i\left(\Rees(E)\right) \quad \text{ for all } \quad 0 \le i \le r,
$$
that we define as polar multiplicities of the Rees algebra $\Rees(E)$.
Given an $R$-submodule $U \subseteq E$, we say that $U$ is a \emph{reduction} of $E$ if $E^{v+1} = UE^v$ for some $v \ge 0$ (equivalently, if $\Rees(U) \hookrightarrow \Rees(E)$ is an integral extension).
Given an $R$-submodule $U \subseteq E$ also having rank $e$, we consider the following invariant 
$$
\br_i(U, E) \;:=\; m_r^i\left(\Rees(U), \Rees(E)\right)   \quad \text{ for all } \quad 0 \le i \le r.
$$

\begin{theorem}
	\label{thm_reduction_mod}
	Let $R$ be an equidimensional and universally catenary Noetherian local ring of dimension $d$.
	Let $U \subseteq E$ be an inclusion of finitely generated $R$-modules having rank $e \ge 1$.
	Set $r = d+e-1$.
	Then the following two conditions are equivalent: 
	\begin{enumerate}[\rm (a)]
		\item $U$ is a reduction of $E$.
		\item $\br_i(U, E) = \br_i(E)$ for all $0 \le i \le r$.
	\end{enumerate}
\end{theorem}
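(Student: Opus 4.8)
The plan is to deduce this from \autoref{thm_criteria} by passing to Rees algebras of modules. Set $A := \Rees(U)$ and $B := \Rees(E)$, which are standard graded $R$-algebras with $A_0 = B_0 = R$ (standardness is built into the definition of the Rees algebra of a module). Because $U$ and $E$ have the same rank $e$, the quotient $E/U$ is $R$-torsion, so there is a nonzerodivisor $c \in R$ with $cE \subseteq U$; hence $U$ and $E$ --- and thus $\Rees(U)$ and $\Rees(E)$ --- coincide after inverting $c$, and since $\Rees(U)$ is $R$-torsion-free the natural map $A \to B$ is injective. With $X = \Proj(B)$ and $Y = \Proj(A)$ one has $\dim(X) = \dim\big(\Rees(E)\big) - 1 = d + e - 1 = r$, and by definition $\br_i(E) = m_r^i(B)$ and $\br_i(U,E) = m_r^i(A,B)$; moreover $U$ is a reduction of $E$ exactly when $A \hookrightarrow B$ is integral, i.e.\ condition {\rm(a)} of \autoref{thm_criteria}(iii). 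Since $A \hookrightarrow B$ becomes an isomorphism after inverting the nonzerodivisor $c$, the morphism $f\colon X\to Y$ is birational. Finally $B = \Rees(E)$ is equidimensional and catenary: catenarity because $B$ is a finitely generated algebra over the universally catenary ring $R$, and equidimensionality because $R$ is equidimensional and $E$ has constant rank $e$. Thus \autoref{thm_criteria} applies.

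The implication {\rm(a)} $\Rightarrow$ {\rm(b)} is then immediate: if $U$ is a reduction of $E$, then $A\hookrightarrow B$ is integral, so \autoref{thm_criteria}(iii) (the implication {\rm(a)} $\Rightarrow$ {\rm(c)} there) yields $m_r^i(A,B) = m_r^i(B)$ for all $0 \le i \le r$, that is, $\br_i(U,E) = \br_i(E)$.

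The converse {\rm(b)} $\Rightarrow$ {\rm(a)} is the substantial part. Assume $\br_i(U,E) = \br_i(E)$, i.e.\ $m_r^i(A,B) = m_r^i(B)$ for all $i$; this is condition {\rm(c)} of \autoref{thm_criteria}(iii), and the goal is to promote it to condition {\rm(a)}. Write $G = \gr_{A_+B}(B)$. From the short exact sequence $0 \to B_1G \to G \to A \to 0$ and \autoref{thm_criteria}(i), the hypothesis gives
$$
m_r^i(B) \;=\; m_r^i(A,B) \;=\; m_r^i(G) \;=\; m_r^i(A) + m_r^i(B_1G) \;\ge\; m_r^i(A) \qquad (0 \le i \le r).
$$
The key point --- and where the equal-rank hypothesis is essential --- is the reverse inequality $m_r^i(B) \le m_r^i(A)$, i.e.\ a monotonicity of (mixed Buchsbaum--Rim) polar multiplicities under the inclusion $\Rees(U)\subseteq\Rees(E)$ of algebras with the same generic fibre over $\Spec(R)$; I would establish this using the length/general-element description of polar multiplicities (\autoref{rem_length_formula_y}, \autoref{lem_localiz_polar}) together with the classical monotonicity of Buchsbaum--Rim multiplicities along inclusions of modules of the same rank. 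Granting this, $m_r^i(A) = m_r^i(B)$ for all $i$, whence $m_r^i(B_1G) = 0$, and therefore $m_r^i(B_tG) = 0$ for all $t \ge 1$ (since $B_tG \subseteq B_1G$ and $m_r^i$ is additive by \autoref{thm_additive_polar}). Equivalently $m_r^i(A,B) = m_r^i(G/B_tG)$ for all $t$, which is condition {\rm(b)} of \autoref{thm_criteria}(iii); since $B$ is equidimensional and catenary, the reverse implication {\rm(b)} $\Rightarrow$ {\rm(a)} there shows that $A \hookrightarrow B$ is integral, i.e.\ $U$ is a reduction of $E$.

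I expect the monotonicity inequality $m_r^i(\Rees(E)) \le m_r^i(\Rees(U))$ to be the main obstacle: \autoref{thm_criteria} itself supplies only $m_r^i(A,B)\ge m_r^i(A)$ and $m_r^i(A,B)\ge m_r^i(B)$, and closing the gap between condition {\rm(c)} and condition {\rm(a)} forces one to exploit that $\Rees(U)$ and $\Rees(E)$ agree generically along $\Spec(R)$. As an alternative route I would try to argue more directly, using birationality to show that each layer $B_tG/B_{t+1}G$ is, up to a degree shift, a quotient of $A\otimes_R(B_t/A_1B_{t-1})$ with the second factor $R$-torsion (annihilated by $c$), so that $\dim\big(\Supp(\widetilde{B_tG/B_{t+1}G})\big) \le \dim(A) - 2 = r-1$ and hence $m_r^i(B_tG/B_{t+1}G) = 0$ for all $i$, by \autoref{thm_additive_polar}; this makes $m_r^i(B_tG)$ independent of $t$ and $m_r^i(G/B_tG) = m_r^i(A)$ for all $t$, again reducing the converse to the comparison of $m_r^i(A)$ and $m_r^i(B)$. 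Everything else --- identifying $\br_i$ with polar multiplicities of Rees algebras, checking that $\Rees(E)$ inherits equidimensionality and catenarity, and invoking \autoref{thm_criteria} and \autoref{thm_additive_polar} --- is routine.
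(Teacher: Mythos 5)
Your reduction to \autoref{thm_criteria} --- passing to $A=\Rees(U)\subseteq B=\Rees(E)$, checking injectivity, $\dim X=r$, and that $B$ is equidimensional and catenary --- and the implication (a) $\Rightarrow$ (b) are correct, and this is essentially all the paper's proof consiststs of: a one-line citation of \autoref{thm_criteria}. The issue is the converse, and you have put your finger on exactly the right spot. Condition (b) of \autoref{thm_reduction_mod} is condition (c) of \autoref{thm_criteria}(iii), and \autoref{thm_criteria} establishes only (a) $\Rightarrow$ (c) and the equivalence of (a) with the condition involving $G/B_tG$; it does not establish (c) $\Rightarrow$ (a). The bridge you propose is the inequality $m_r^i(\Rees(E))\le m_r^i(\Rees(U))$, which you assert but do not prove; it follows from nothing in the paper (the paper's inequalities $m_r^i(A,B)\ge m_r^i(A)$ and $m_r^i(A,B)\ge m_r^i(B)$ both point the wrong way), and deciding when $\br_i(U)=\br_i(E)$ forces a reduction is precisely the open Question with which the paper closes. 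So, as written, your converse has a genuine gap. In fairness, the paper's own one-sentence proof does not close this gap either: it silently treats (c) $\Rightarrow$ (a) as part of \autoref{thm_criteria}, which it is not; your diagnosis is sharper than the paper's exposition.

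Your ``alternative route'' is the part worth developing. Since $E/U$ is torsion there is a nonzerodivisor $c\in R$ with $cE\subseteq U$, and since $\Rees(U)$ is $R$-torsion-free, $c$ is a nonzerodivisor on $A$; each layer $B_tG/B_{t+1}G$, with $[B_tG/B_{t+1}G]_v\cong A_{v-t}B_t/A_{v-t+1}B_{t-1}$, is a finitely generated graded $A$-module killed by $c$, hence has $\Proj$-support of dimension at most $\dim(A/cA)-1\le r-1$, so $m_r^i(B_tG/B_{t+1}G)=0$ and $m_r^i(G/B_tG)=m_r^i(A)$ for every $t\ge1$. Combined with \autoref{thm_criteria}(iv) --- noting that finiteness here automatically implies birationality, because inverting $c$ identifies $A_\pp$ with $B_\pp$ at every minimal prime $\pp$ of $A$ of dimension $r+1$ --- this yields a complete proof of the variant ``$U$ is a reduction of $E$ if and only if $\br_i(U,E)=\br_i(U)$ for all $i$''. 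But for the theorem as stated, with $\br_i(E)$ on the right-hand side, the monotonicity $m_r^i(\Rees(U))\ge m_r^i(\Rees(E))$ remains the missing ingredient, and it requires an actual argument (superficial-element techniques for mixed Buchsbaum--Rim multiplicities in the style of Kleiman--Thorup), not a gesture at the classical finite-colength case.
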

\begin{proof}
	By applying \autoref{thm_criteria} to the inclusion of standard graded $R$-algebras $\Rees(U) \hookrightarrow \Rees(E)$, the result of the theorem follows.
\end{proof}

Finally, we should ask the following question. 

\begin{question}
 	Under which conditions do we have the equivalence that $U$ is a reduction of $E$ if and only if $\br_i(U) = \br_i(E)$ for all $i$?
\end{question}

\section*{Acknowledgments}
We thank the reviewer for carefully reading our paper and for their comments and corrections.
 
 \bibliography{references}

\end{document}